\documentclass[12pt]{amsart}
\usepackage[a4paper,margin=2.5cm]{geometry}
\usepackage[T1]{fontenc}
\usepackage[english]{babel}
\usepackage{microtype}
\usepackage{comment}
\usepackage{amsmath,amssymb,amsthm,amscd,mathtools}
\usepackage{mathrsfs}
\usepackage{enumitem}
\usepackage{xcolor}
\usepackage{xcolor}
\definecolor{citeblue}{RGB}{0,70,140}
\usepackage[
  colorlinks=true,
  linkcolor=citeblue,
  citecolor=citeblue,
  urlcolor=citeblue
]{hyperref}
\usepackage{etoolbox}
\makeatletter
\patchcmd{\@setaddresses}
  {\par\addvspace\bigskipamount\indent}
  {\par\addvspace\bigskipamount\noindent}
  {}{}
\makeatother
\makeatletter
\def\l@subsection{\@tocline{2}{0pt}{3pc}{6pc}{}}
\makeatother
\makeatletter
\renewcommand\section{\@startsection{section}{1}%
  \z@{.7\linespacing\@plus\linespacing}{.5\linespacing}%
  {\normalfont\bfseries\centering}}

\renewcommand\subsection{\@startsection{subsection}{2}%
  \z@{.5\linespacing\@plus.7\linespacing}{-.5em}%
  {\normalfont\bfseries}}
\makeatother

\numberwithin{equation}{section}

\newtheorem{theorem}{Theorem}[section]
\newtheorem{prop}[theorem]{Proposition}
\newtheorem{lemma}[theorem]{Lemma}

\newtheorem*{remark}{Remark}

\DeclareMathOperator{\Tors}{Tors}
\DeclareMathOperator{\Cob}{Cob}

\title[ $U(1)$ Chern--Simons and Reshetikhin--Turaev TQFTs]{ Equivalence of Extended\\ $U(1)$ Chern--Simons and Reshetikhin--Turaev TQFTs}
\author{Daniel Galviz}
\address{YAU MATHEMATICAL SCIENCES CENTER AND DEPARTMENT OF MATHEMATICS, TSINGHUA UNIVERSITY, BEIJING, CHINA.}
\email{}
\date{}

\begin{document}

\begin{abstract}
\noindent
We establish the equivalence between \(U(1)\) Chern--Simons and Reshetikhin--Turaev TQFTs associated with finite quadratic modules. For gauge group \(U(1)\) and even level \(k\), we prove that the corresponding Chern--Simons TQFT is naturally isomorphic to the Reshetikhin--Turaev TQFT determined by the pointed modular category \(\mathcal C(\mathbb Z_k,q_k)\). The equivalence holds both for closed \(3\)-manifolds and for bordisms with boundary, so that the two constructions define naturally isomorphic extended \((2+1)\)-dimensional TQFTs. In particular, the finite quadratic module \((\mathbb Z_k,q_k)\) completely determines the \(U(1)\)  Chern--Simons theory.

\vspace{0.5cm}
\medskip

\noindent\textbf{Main Theorem.}
Let \(k\in\mathbb Z\) be a nonzero even integer, let \((\mathbb Z_k,q_k)\) be the associated finite quadratic module, and let
\(\mathcal C(\mathbb Z_k,q_k)\) denote the corresponding pointed modular category.
Then the Reshetikhin--Turaev theory associated with \(\mathcal C(\mathbb Z_k,q_k)\) is naturally isomorphic, as an extended \((2+1)\)-dimensional TQFT, to the Abelian Chern--Simons theory with gauge group \(U(1)\) and level \(k\).
Equivalently, the two theories define naturally isomorphic symmetric monoidal functors $\Cob^{\mathrm{ext}}_{2+1}\longrightarrow \mathrm{Vect}_{\mathbb C}.$
\end{abstract}

\maketitle
{\scriptsize
\setlength{\parskip}{0pt}
\hypersetup{linkcolor=black}
\tableofcontents}

\newpage
\section{Introduction}

Since the construction of Chern--Simons theory by Witten and its mathematical realization provided by Reshetikhin and Turaev \cite{Witten:1988,Reshetikhin1990}, an equivalence between these theories has long been expected, although it has not yet been proved. Historically, the non--Abelian theory was the first to attract attention; however, the Abelian case is by no means trivial. On the contrary, it provides a setting in which several constructions can be made completely explicit and compared in detail. Non--Abelian Chern--Simons theory with finite gauge group, developed  by Freed and Quinn \cite{FreedCS,Freed:1991}, already shows that topological gauge theories can often be understood by concrete cutting-and-gluing constructions. Abelian Chern--Simons theory may be viewed as a continuous counterpart of this picture, but one in which zero modes, torsion, and quadratic refinements play a more delicate role. Early gauge constructions of Abelian 3-manifold invariants in the $U(1)$ setting were already given by Gocho \cite{Gocho1,Gocho2}.

Two approaches to $U(1)$ Abelian Chern--Simons theory are especially relevant for this paper. The first is geometric. In a series of papers, Manoliu \cite{Manoliu1,Manoliu2,Manoliu3} studied the quantization of symplectic tori in real polarizations and then applied this machinery to $U(1)$ Chern--Simons theory on closed oriented $3$-manifolds and on manifolds with boundary. In her formulation, the partition function is expressed in terms of Ray--Singer/Reidemeister torsion, which makes both its topological invariance and its dependence on the torsion subgroup of $H_1(M;\mathbb Z)$ transparent. Just as importantly, Manoliu’s theory provides a genuine boundary-value formalism: the state spaces are obtained by geometric quantization of symplectic boundary tori, and bordisms act by operators constructed from Chern--Simons sections and Blattner--Kostant--Sternberg pairings \cite{Manoliu1,Manoliu2}.

The second approach is combinatorial. Mattes, Polyak, and Reshetikhin (MPR) \cite{Mattes}, and independently Murakami, Ohtsuki, and Okada \cite{Murakami}, constructed Abelian $3$-manifold invariants in terms of explicit quadratic Gauss sums associated with surgery presentations. In this framework, the dependence on linking numbers and quadratic forms on finite Abelian groups is completely explicit. This surgery description is closer in spirit to the Reshetikhin--Turaev formalism. Later work of Deloup and Turaev \cite{Deloup1999,deloup2005} further clarified the role of linking forms, reciprocity formulas, and finite quadratic data in these invariants.

The equivalence between the Abelian Reshetikhin--Turaev TQFT and the \(U(1)\) Chern--Simons theory has long been expected, but to our knowledge it had not previously been established. This equivalence is not immediate, because the two theories are formulated in very different languages. On the geometric side one encounters symplectic boundary phase spaces, real polarizations, and half-densities. On the combinatorial side one works with surgery presentations, Gauss sums, determinant corrections, and Kirby calculus. Many comparisons in the literature stop at the level of closed partition functions and do not address boundary state spaces or bordism operators \cite{Murakami,Jeffrey:1992, Mattes, Stirling, Guadagnini:2013sb,Guadagnini2014,Tagaris2025}. From the TQFT point of view, however, these are precisely the essential structures. Moreover, normalization issues are subtle. Geometric formulas contain Gaussian factors arising from the free part of cohomology together with torsion factors coming from Reidemeister or Ray--Singer torsion, whereas combinatorial formulas involve determinant factors and signature phases coming from quadratic reciprocity and surgery normalization. Any satisfactory comparison must therefore match these contributions term by term.

A further difficulty is that the degenerate surgery case $\det \mathcal L=0$, equivalently $b_1(M)>0$, cannot be ignored. From the perspective of TQFT functoriality, cutting and gluing naturally produce intermediate bordisms whose linking matrices are singular. Thus the correct statement is not simply an equality of partition functions for rational homology spheres, but an equivalence valid in the presence of both torsion sectors and zero-mode sectors. This is one reason why the natural framework for the problem is that of extended TQFT.

In Atiyah’s axiomatic language, a $3$-dimensional TQFT is a symmetric monoidal functor from a bordism category to vector spaces \cite{Atiyah:1989vu}. In the present setting, however, one must work with an extended bordism category in which boundary surfaces carry additional Lagrangian data and gluing is corrected by Maslov indices \cite{Walker,Turaev1994}. The correct formulation of the problem is therefore not merely to compare scalar invariants of closed $3$-manifolds, but to compare the extended $(2+1)$-dimensional theories. In concrete terms, one seeks canonically isomorphic state spaces for each extended surface $(\Sigma,\lambda)$, and identical linear maps for each extended bordism $X:(\Sigma_{\mathrm{in}},\lambda_{\mathrm{in}})\longrightarrow(\Sigma_{\mathrm{out}},\lambda_{\mathrm{out}}), $ after the Maslov anomaly has been normalized in a compatible manner.

The $U(1)$ case treated here provides the fundamental model for the general Abelian theory. Precisely because it is fully explicit, it allows a complete comparison between the geometric and combinatorial constructions at the TQFT level, including boundary state spaces and bordism operators. Related works develop the higher-rank toral case and its classification \cite{Galviz2,Galviz2.5,Galviz3,Galviz4}.

The paper proceeds as follows. First, we review Manoliu's geometric quantization construction of \(U(1)\) Chern--Simons theory, including the boundary state spaces, bordism operators, and extended gluing formalism. Second, we reformulate the corresponding Abelian Reshetikhin--Turaev theory for the pointed modular category \(C(\mathbb{Z}_k,q_k)\), identifying its surgery expressions and boundary state spaces. Third, we prove that these two constructions agree on closed \(3\)-manifolds, on boundary state spaces, and on the operators assigned to bordisms, yielding a natural isomorphism of extended \((2+1)\)-dimensional TQFTs. In particular, the finite quadratic module \((\mathbb{Z}_k,q_k)\) is shown to encode exactly the discrete data governing the \(U(1)\) Chern--Simons theory.

\medskip

\textbf{Acknowledgements.}
I would like to thank Nicolai Reshetikhin for many helpful conversations, suggestions, for introducing me to Abelian Chern–Simons theory, and for his support throughout the development of this project.  I would also like to thank Dan Freed, Stavros Garoufalidis, Zhengwei Liu, and Yilong Wang for their valuable comments and suggestions on the manuscript.

\section{\texorpdfstring{Geometric Quantization of $U(1)$  Chern--Simons Theory}{ Geometric Quantization of U(1)  Chern--Simons Theory}}
\label{sec:1}

In this section we summarize the main results of Manoliu concerning the construction of
Abelian \(U(1)\) Chern--Simons theory as a unitary extended
\((2+1)\)-dimensional topological quantum field theory. We restrict ourselves
to the ingredients needed later: the Chern--Simons functional, the geometric
quantization of the boundary phase space, the construction of the vectors
assigned to \(3\)-manifolds, and the extended gluing law. For full proofs of the $U(1)$  Chern-Simons theory, we refer to the original sources \cite{Manoliu1,Manoliu2}.

\subsection{The Abelian Chern--Simons Functional}
Let \(X\) be a closed oriented \(3\)-manifold, let \(P\to X\) be a principal
\(U(1)\)-bundle, and let \(\Theta\) be a connection on \(P\). Following
\cite{Manoliu2}, the Chern--Simons functional is defined by passing to the
associated \(SU(2)\)-bundle
\[
\hat P=P\times_{U(1)}SU(2)\longrightarrow X,
\]
where \(U(1)\hookrightarrow SU(2)\) is the diagonal embedding
\[
e^{i\theta}\longmapsto
\begin{pmatrix}
e^{i\theta} & 0\\
0 & e^{-i\theta}
\end{pmatrix}.
\]
The connection \(\Theta\) induces an \(SU(2)\)-connection \(\hat\Theta\) on
\(\hat P\), and one sets
\begin{equation}
S_{X,P}(\Theta)
=
\int_X \hat s^{\,*}\alpha(\hat\Theta)
\qquad (\mathrm{mod}\,1),
\end{equation}
where \(\hat s:X\to \hat P\) is any section and
\begin{equation}
\alpha(\hat\Theta)
=
\left\langle \hat\Theta\wedge F_{\hat\Theta}\right\rangle^{\flat}
-\frac16
\left\langle \hat\Theta\wedge[\hat\Theta,\hat\Theta]\right\rangle^{\flat}.
\end{equation}
Here \(\alpha(\hat\Theta)\) is the ordinary Chern--Simons \(3\)-form of the
\(SU(2)\)-connection \(\hat\Theta\). The value of \(S_{X,P}(\Theta)\in
\mathbb R/\mathbb Z\) is independent of the choice of \(\hat s\), since
different choices differ by an integer winding-number term.

The functional \(S_{X,P}:\mathcal A_P\to \mathbb R/\mathbb Z\) is gauge invariant and therefore descends to the quotient
\(\mathcal A_P/\mathcal G_P\). It satisfies the expected formal properties:
functoriality under orientation-preserving bundle morphisms,
\[
S_{X',P'}(\phi^*\Theta)=S_{X,P}(\Theta),
\]
orientation reversal,
\[
S_{-X,P}(\Theta)=-S_{X,P}(\Theta),
\]
and additivity under disjoint union,
\[
S_{X_1\sqcup X_2,\;P_1\sqcup P_2}(\Theta_1\sqcup \Theta_2)
=
S_{X_1,P_1}(\Theta_1)+S_{X_2,P_2}(\Theta_2).
\]
Moreover, its critical points are precisely the flat connections:
\begin{equation}
dS_{X,P}(\Theta)=0
\quad\Longleftrightarrow\quad
F_\Theta=0.
\end{equation}
Since the gauge group is Abelian, this condition is simply \(d\Theta=0\) in a
local trivialization.

When \(X\) has boundary, the Chern--Simons functional is no longer a
gauge-invariant scalar, but instead gives rise to a section of the boundary
prequantum line bundle. This is the basic geometric input in the construction of
the Abelian Chern--Simons state \(Z_X^{\mathrm{CS}}\).

\subsection{Geometric Quantization of the Boundary Phase Space}

Throughout, the gauge group is \(U(1)=\{z\in\mathbb C:\ |z|=1\}\), and the level \(k\in\mathbb Z\) is assumed to be even. This ensures that the Chern--Simons functional is well defined modulo \(\mathbb Z\) without additional spin structure and that the corresponding prequantum line bundle exists. Since \(U(1)\) is Abelian, every flat connection on a closed oriented surface \(\Sigma\) is determined by its holonomy, so the moduli space of flat \(U(1)\)-connections modulo gauge equivalence identifies canonically with the torus
\[\mathcal M_\Sigma\cong H^1(\Sigma;\mathbb R)/H^1(\Sigma;\mathbb Z). 
\]
If \(g\) is the genus of \(\Sigma\), then \(\mathcal M_\Sigma\) is a \(2g\)-dimensional symplectic torus, with symplectic form \(\omega_\Sigma\) induced by the cup-product pairing on \(H^1(\Sigma;\mathbb R)\), equivalently by integration over \(\Sigma\). For even \(k\), one constructs a  Hermitian line bundle \(\mathcal L_\Sigma\to\mathcal M_\Sigma\) with unitary connection of curvature \(\operatorname{curv}(\nabla_{\mathcal L_\Sigma})=-2\pi i\,k\,\omega_\Sigma\); this is the Chern--Simons prequantum line bundle. Although prequantum line bundles on \((\mathcal M_\Sigma,k\omega_\Sigma)\) form a torsor under \(H^1(\mathcal M_\Sigma;U(1))\), the Chern--Simons line bundle construction canonically selects \(\mathcal L_\Sigma\) \cite{Freed:1991}.

To quantize \(\mathcal M_\Sigma\), one chooses a rational Lagrangian subspace \(L\subset H^1(\Sigma;\mathbb R)\), that is, a Lagrangian subspace defined over \(\mathbb Q\). Such an \(L\) determines an invariant real polarization \(\mathcal P_L\) of \(\mathcal M_\Sigma\). Quantization is performed with half-densities: if \(\mathcal{BS}_{\mathcal P_L}\) denotes the Bohr--Sommerfeld set of the polarization and \(|\det(\mathcal P_L^*)|^{1/2}\) the corresponding half-density bundle, then the Hilbert space assigned to the extended surface \((\Sigma,L)\) is
\begin{equation}\label{eq:H-Sigma-L}
\mathcal H(\Sigma,L)
=
\bigoplus_{\Lambda\subset \mathcal{BS}_{\mathcal P_L}}
\Gamma_{\mathrm{flat}}\!\left(
\Lambda;\mathcal L_\Sigma\otimes |\det(\mathcal P_L^*)|^{1/2}
\right).
\end{equation}
Equivalently, one may view each summand as the one-dimensional space of covariantly constant sections over a Bohr--Sommerfeld leaf. The resulting Hilbert space is finite-dimensional and satisfies \(\dim\mathcal H(\Sigma,L)=|k|^g\). If \(L_1\) and \(L_2\) are two rational Lagrangians, geometric quantization produces a canonical unitary Blattner--Kostant--Sternberg operator \(F_{L_2L_1}:\mathcal H(\Sigma,L_1)\to\mathcal H(\Sigma,L_2)\). These operators do not compose strictly: their composition is twisted by the Maslov--Kashiwara index of triples of Lagrangian subspaces, and this defect is precisely what the extended formalism corrects.

Now let \(X\) be a compact oriented \(3\)-manifold with boundary. The restriction of flat connections induces a map \(r_X:\mathcal M_X\to\mathcal M_{\partial X}\), and on cohomology the corresponding map \(\dot r_X:H^1(X;\mathbb R)\to H^1(\partial X;\mathbb R)\) has image \(L_X:=\operatorname{Im}(\dot r_X)\), which is a rational Lagrangian subspace of \(H^1(\partial X;\mathbb R)\). Thus \(X\) canonically determines a boundary polarization \(\mathcal P_{L_X}\) and hence a Hilbert space \(\mathcal H(\partial X,L_X)\). The moduli space of flat \(U(1)\)-connections on \(X\) decomposes according to the torsion topological type of the underlying bundle as
\[\mathcal M_X=\bigsqcup_{p\in \Tors H^2(X;\mathbb Z)}\mathcal M_{X,p}.\]
For each torsion class \(p\), the component \(\mathcal M_{X,p}\) carries a canonically defined covariantly constant Chern--Simons section \(\sigma_{X,p}: \mathcal M_{X,p}\to r_X^*\mathcal L_{\partial X}|_{\mathcal M_{X,p}}\), whose phase is \(\exp(2\pi i k\,CS(p))\), where \(CS(p)\) is a quadratic refinement of the torsion linking pairing. Summing over all components gives 
\[\sigma_X=\sum_{p\in \Tors H^2(X;\mathbb Z)}\sigma_{X,p}.\]
The second ingredient in the construction of the state associated with \(X\) is the canonical half-density \(\mu_X\) on the Lagrangian subtorus \(\Lambda_X:=\operatorname{Im}(r_X)\subset \mathcal M_{\partial X}\), obtained from Reidemeister torsion, equivalently Ray--Singer torsion. More precisely, the torsion \(T_X\), together with Poincar\'e duality and the long exact sequence of the pair \((X,\partial X)\), determines an invariant half-density on \(\Lambda_X\) denoted by \[\mu_X=\int_{H^1(X,\partial X;U(1))}(T_X)^{1/2}.\] Combining the Chern--Simons section and the torsion half-density yields the state
\begin{equation}\label{eq:ZX-CS-merged}
Z_X^{\mathrm{CS}}
=
\frac{k^{m_X}}{\#\Tors H^2(X;\mathbb Z)}
\sum_{p\in \Tors H^2(X;\mathbb Z)}
\sigma_{X,p}\otimes \mu_X
\in \mathcal H(\partial X,L_X),
\end{equation}
where
\[m_X=\frac14\big(\dim H^1(X;\mathbb R)+\dim H^1(X,\partial X;\mathbb R)-\dim H^0(X;\mathbb R)-\dim H^0(X,\partial X;\mathbb R)\big).\]
In the closed case \(\partial X=\varnothing\), the Hilbert space reduces to \(\mathbb C\), the half-density becomes a density on \(\mathcal M_X=H^1(X;U(1))\), and \(Z_X^{\mathrm{CS}}\) becomes a scalar invariant, equivalently 

\begin{equation}\label{closed-cs}
Z_X^{\mathrm{CS}}=k^{m_X}\int_{\mathcal M_X}\sigma_X\,(T_X)^{1/2},
\end{equation}
with \(m_X=\frac12(\dim H^1(X;\mathbb R)-\dim H^0(X;\mathbb R))\).

To obtain strict functoriality under gluing, one passes to Walker’s category of extended manifolds \cite{Walker}. An extended \(2\)-manifold is a pair \((\Sigma,L)\), where \(\Sigma\) is a closed oriented surface and \(L\subset H^1(\Sigma;\mathbb R)\) is a rational Lagrangian subspace. An extended \(3\)-manifold is a triple \((X,L,n)\), where \(X\) is a compact oriented \(3\)-manifold, \(L\subset H^1(\partial X;\mathbb R)\) is a rational Lagrangian subspace, and \(n\in\mathbb Z/8\mathbb Z\) records the Maslov correction required for strict gluing. The extended Abelian Chern--Simons TQFT then assigns the Hilbert space \(\mathcal H(\Sigma,L)\) to \((\Sigma,L)\), and to an extended \(3\)-manifold \((X,L,n)\) it assigns the vector
\begin{equation}\label{eq:extended-CS-state}
Z^{\mathrm{CS}}_{(X,L,n)}
=
e^{\frac{\pi i}{4}n}\,F_{LL_X}\!\left(Z_X^{\mathrm{CS}}\right)
\in \mathcal H(\partial X,L),
\end{equation}
where \(F_{LL_X}:\mathcal H(\partial X,L_X)\to \mathcal H(\partial X,L)\) is the BKS operator associated with the change of polarization. With this correction, the resulting assignments satisfy functoriality, multiplicativity under disjoint union, compatibility with orientation reversal, and the gluing law; see \cite{Manoliu2} for the full proof.

\subsection{Abelian Chern--Simons TQFT}
The Abelian Chern--Simons TQFT construction can  be summarized as follows:
\begin{theorem}[\cite{Manoliu2}, Theorem VI.11]
The assignments
\[
(\Sigma,L)\longmapsto \mathcal H(\Sigma,L),
\qquad
(X,L,n)\longmapsto Z^{\mathrm{CS}}_{(X,L,n)}
\]
\emph{define a unitary extended \((2+1)\)-dimensional TQFT. More precisely:}

\begin{enumerate}
\item \emph{\textbf{Functoriality:}} extended diffeomorphisms induce unitary isomorphisms
between the corresponding Hilbert spaces, and the vectors \(Z^{\mathrm{CS}}_{(X,L,n)}\) are
natural with respect to these maps.

\item \emph{\textbf{Orientation:}} there is a natural antilinear identification
\[
\mathcal H(-\Sigma,L)\cong \overline{\mathcal H(\Sigma,L)},
\]
and
\[
Z^{\mathrm{CS}}_{(-X,L,n)}=\overline{Z^{\mathrm{CS}}_{(X,L,n)}}.
\]

\item \emph{\textbf{Disjoint union:}} one has canonical unitary identifications
\[
\mathcal H(\Sigma_1\sqcup \Sigma_2,L_1\oplus L_2)
\cong
\mathcal H(\Sigma_1,L_1)\otimes \mathcal H(\Sigma_2,L_2),
\]
and the corresponding states are multiplicative under disjoint union.

\item \emph{\textbf{Cylinder axiom:}} the cylinder \(\Sigma\times I\) defines the identity
operator on \(\mathcal H(\Sigma,L)\).

\item \emph{\textbf{Gluing:}} if \(X\) is cut along a closed oriented surface
\(\Sigma\), then the state of the glued manifold is obtained by contracting the
state of the cut manifold using the Hermitian pairing on
\(\mathcal H(\Sigma,L_\Sigma)\), with the required Maslov correction built into
the extended structure.
\end{enumerate}
\end{theorem}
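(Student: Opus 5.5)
The plan is to assemble the statement from the three ingredients already reviewed: the state spaces \eqref{eq:H-Sigma-L}, the states \eqref{eq:ZX-CS-merged}, and the extended correction \eqref{eq:extended-CS-state}. Each axiom is then checked separately. Since this is Manoliu's theorem, the argument follows \cite{Manoliu2}, organized around the fact that every ingredient is defined canonically from cohomological data of \(X\) and \(\partial X\).

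\emph{Functoriality and disjoint union.} An extended diffeomorphism \(f:(\Sigma,L)\to(\Sigma',L')\) induces a symplectomorphism of the tori \(\mathcal M_\Sigma\to\mathcal M_{\Sigma'}\) that preserves the polarizations. It lifts canonically to the Chern--Simons line bundles, because \(\mathcal L_\Sigma\) is selected canonically \cite{Freed:1991}. It therefore permutes the Bohr--Sommerfeld leaves and transports half-densities, and this gives a unitary map on \eqref{eq:H-Sigma-L}. Naturality of \(Z^{\mathrm{CS}}_{(X,L,n)}\) holds because each ingredient is natural under diffeomorphisms: \(\sigma_{X,p}\) by functoriality of \(S_{X,P}\), \(T_X\) by topological invariance of Reidemeister torsion, and \(m_X\) and \(\Tors H^2\) are invariants. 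The disjoint-union axiom follows from three facts: \(\mathcal M_{\Sigma_1\sqcup\Sigma_2}=\mathcal M_{\Sigma_1}\times\mathcal M_{\Sigma_2}\), the Chern--Simons line bundle is an external tensor product, and both torsion and the Chern--Simons action are multiplicative or additive under disjoint union. Orientation reversal sends \(\omega\mapsto-\omega\), which turns \(\mathcal L_\Sigma\) into its conjugate, and \(S_{-X}=-S_X\) conjugates \(\sigma_X\).

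\emph{Cylinder axiom.} For \(X=\Sigma\times I\) we have \(\Tors H^2=0\), and \(L_X\) is the diagonal in \(H^1(\Sigma)\oplus H^1(-\Sigma)\). The torsion half-density is then the canonical diagonal half-density. The remaining step is to check that the normalization \(k^{m_X}\), with \(m_X=\tfrac12\cdot g\cdot 2/2=g/2\) after counting dimensions, makes the resulting kernel the identity on \(\mathcal H(\Sigma,L)\) once we pass through \(F_{L\oplus L,\,L_X}\). This is a finite Gauss-sum computation on \(\mathbb Z_k^{g}\).

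\emph{Gluing, the main obstacle.} Write \(X=X_1\cup_\Sigma X_2\). I would split the verification into three factors. First, the \(U(1)\)-valued Chern--Simons phases glue: \(\sigma_{X_1}\) and \(\sigma_{X_2}\) pair fibrewise to \(\sigma_X\), by additivity of \(S\) under gluing. Second, the torsions glue via the Mayer--Vietoris sequence and the multiplicativity of Reidemeister torsion, which gives \(T_X=T_{X_1}T_{X_2}\tau(\mathcal H)\). Here \(\tau(\mathcal H)\) is the torsion of the Mayer--Vietoris long exact sequence, and it must be matched with the Jacobian appearing in the Hermitian pairing of half-densities. Third, the prefactors combine: the powers of \(k\) and the factors \(1/\#\Tors H^2\) must combine to \(k^{m_X}/\#\Tors H^2(X)\). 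This uses the exact sequences for \(\Tors\) and for \(H^0,H^1\), together with a count of how many components \(\mathcal M_{X_i,p_i}\) glue to a given \(\mathcal M_{X,p}\); I expect this index bookkeeping to be the delicate part. Finally, when the polarizations \(L_{X_1}\) and \(L_{X_2}\) induced on \(\Sigma\) are not transverse, the pairing must be routed through BKS operators. Their failure to compose is the Maslov--Kashiwara index \(\tau(L_1,L_2,L_3)\in\mathbb Z/8\). The defining property of Walker's gluing rule for \(n\) is \(n=n_1+n_2-\tau(\cdot,\cdot,\cdot)\), and the factor \(e^{\pi i n/4}\) in \eqref{eq:extended-CS-state} cancels this defect exactly. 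So the hardest step is proving the BKS composition law \(F_{L_3L_2}F_{L_2L_1}=e^{-\pi i\tau/4}F_{L_3L_1}\) for the quantized torus. I would attempt it first by reducing to the case of genus one via a symplectic splitting, and then evaluating the resulting quadratic Gauss sum by reciprocity.
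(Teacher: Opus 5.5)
The paper gives no proof of this statement. It is quoted from \cite{Manoliu2}, and your axiom-by-axiom outline is a reasonable reconstruction of Manoliu's argument. The step that would fail is the one you identify as the crux: proving $F_{L_3L_2}F_{L_2L_1}=e^{-\pi i\tau/4}F_{L_3L_1}$ by reducing to genus one through a symplectic splitting adapted to the triple.

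The quantization depends on the lattice $H^1(\Sigma;\mathbb Z)$, which defines $\mathcal M_\Sigma$, its Bohr--Sommerfeld leaves and the BKS kernels. A splitting over $\mathbb Q$ is therefore not enough; you need one that also splits the lattice, and in general none exists. For example, take $g=2$, a symplectic basis $a_1,a_2,b_1,b_2$ of $H^1(\Sigma;\mathbb Z)$, $L_1=\langle a_1,a_2\rangle$, $L_2=\langle b_1,b_2\rangle$, and $L_3$ the graph of $S=\left(\begin{smallmatrix}2&1\\1&2\end{smallmatrix}\right)$. An integral splitting compatible with all three would give $P\in GL(2,\mathbb Z)$ with $P^{\top}SP$ diagonal. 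This is impossible: $P^{\top}SP$ is even, so its diagonal entries are even, while its determinant is $3$.

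What works is to keep all $g$ variables. The lifts to $\mathcal L_\Sigma$ of translations of $\mathcal M_\Sigma$ by $k$-torsion points form a finite Heisenberg group. This group acts irreducibly on each $k^g$-dimensional $\mathcal H(\Sigma,L)$, and the BKS operators intertwine these actions. By Schur's lemma, $F_{L_3L_2}F_{L_2L_1}$ is a scalar multiple of $F_{L_3L_1}$. That scalar is a single finite quadratic Gauss sum. Multidimensional reciprocity, as in \eqref{eq:deloup-turaev}, turns its phase into $e^{\pm\pi i\sigma/4}$, where $\sigma$ is the signature of the form that computes the Maslov--Kashiwara index.

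There are three smaller points.
\begin{enumerate}
\item The cylinder check is not a Gauss-sum computation. Let $\ell_\alpha$ denote the leaf of $L$ labelled $\alpha$. The diagonal leaf $\Lambda_X$ meets the Bohr--Sommerfeld leaf $\ell_\alpha\times\ell_\beta$ of $L\oplus L$ only when $\alpha=\beta$. So the kernel is automatically diagonal, and only the constant fixed by $k^{g/2}$ and the torsion half-density needs checking.
\item In the gluing step, the Maslov factor does not come from non-transversality of the Lagrangians that $X_1$ and $X_2$ induce on $\Sigma$. It comes from routing the pairing through the extended Lagrangian $L_\Sigma$, i.e.\ from the triple formed by those two Lagrangians and $L_\Sigma$, and it is present in the transverse case too. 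Non-transversality instead produces extra continuous flat directions on $X$, so it belongs to your $k^{m_X}$ and torsion bookkeeping.
\item In the orientation axiom, the weight must change sign, $-(X,L,n)=(-X,L,-n)$, because conjugation sends the factor $e^{\pi i n/4}$ in \eqref{eq:extended-CS-state} to $e^{-\pi i n/4}$. Conjugating $\sigma_X$ alone does not account for this.
\end{enumerate}
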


\medskip

Thus \(U(1)\) Abelian Chern--Simons theory at even level \(k\) gives a unitary extended \((2+1)\)-dimensional TQFT. Its boundary state spaces arise by
geometric quantization of symplectic moduli tori, and its bordism operators are
constructed from the Chern--Simons functional, BKS pairings, and torsion
half-densities. This geometric framework will be the geometric side of the
comparison with Abelian Reshetikhin--Turaev theory carried out in the next
sections.

\section{Surgery Invariants from Abelian Gauge Groups}
\label{sec:abelian-surgery}

This section reviews the Abelian surgery invariants introduced by Mattes, Polyak, and Reshetikhin \cite{Mattes}, and also  Murakami, Ohtsuki, and Okada \cite{Murakami},  with emphasis on the case relevant for the comparison with \(U(1)\) Chern--Simons theory. We focus on three points. First, we describe the pointed Reshetikhin--Turaev invariant associated with the finite quadratic data \((\mathbb Z_k,q_k)\), where \(k\in 2\mathbb Z_{>0}\). Second, we recall the Gauss reciprocity formulas that relate this surgery invariant to the decomposition of the Abelian Chern--Simons partition function into topological sectors. Third, we explain why an analogous construction based directly on the compact group \(U(1)\) does not yield a nontrivial Abelian Reshetikhin--Turaev theory under natural continuity assumptions.

The relevance of this discussion is that the combinatorial invariant attached to \((\mathbb Z_k,q_k)\) is the correct discrete counterpart of \(U(1)\) Chern--Simons theory at level \(k\). In later sections we will compare this invariant, first on closed \(3\)-manifolds and then in the presence of boundary, with the geometric theory reviewed above.

\subsection{\texorpdfstring{The Abelian RT invariant for \(G=\mathbb Z_k\)}{The Abelian RT invariant for G=Zk}}

Let \(k\in 2\mathbb Z_{>0}\), and write \(G=\mathbb Z_k=\mathbb Z/k\mathbb Z\) additively. To match the \(U(1)\) Chern--Simons theory at level \(k\), we fix the pointed ribbon data
\begin{equation}\label{eq:Zk-data}
F\equiv 1,
\qquad
q_k(x)=\exp\!\Big(\frac{\pi i}{k}x^2\Big),
\qquad
\Omega_k(x,y)=\frac{q_k(x+y)}{q_k(x)q_k(y)}
=\exp\!\Big(\frac{2\pi i}{k}xy\Big).
\end{equation}
Since \(k\) is even, \(q_k\) is well defined on \(\mathbb Z_k\), and \(\Omega_k\) is a nondegenerate bicharacter. These data determine a pointed modular category $\mathcal C_k:=\mathcal C(\mathbb Z_k,q_k),$ whose simple objects are indexed by elements of \(\mathbb Z_k\), with tensor product induced by addition in \(\mathbb Z_k\), trivial associator, braiding
\[
c_{x,y}=\Omega_k(x,y)\,\mathrm{id}_{x+y},
\]
and ribbon twist
\[
\theta_x=q_k(x)\,\mathrm{id}_x.
\]

Let $J=J_1\cup\cdots\cup J_s\subset S^3$ be an oriented framed link, and let \(x=(x_1,\dots,x_s)\in \mathbb Z_k^s\) be a coloring of its components. If \(\mathcal L_J\) denotes the symmetric linking matrix of \(J\), with framings on the diagonal, then the corresponding pointed Reshetikhin--Turaev evaluation is
\begin{equation}\label{eq:RT-link-evaluation}
\mathrm{RT}_{\mathcal C_k}(J;x)
=
\exp\!\Big(\frac{\pi i}{k}\,\langle x,\mathcal L_Jx\rangle\Big),
\end{equation}
where for vectors \(u,v\) and a matrix \(A\) we write
\[
\langle u,v\rangle=u^{\!\top}v,
\qquad
\langle u,Av\rangle=u^{\!\top}Av.
\]
Indeed, each framing contributes the twist \(q_k(x_i)\), each crossing contributes the braiding \(\Omega_k(x_i,x_j)^{\pm1}\), and multiplying all local contributions yields the quadratic exponential \eqref{eq:RT-link-evaluation}. Now let $L=L_1\cup\cdots\cup L_m$ be a surgery link presenting the closed oriented \(3\)-manifold \(M_L\), and let $L'=L'_1\cup\cdots\cup L'_r$ be a colored link in the complement of \(L\). We write the linking matrix of \(L\sqcup L'\) in block form as
\[
\mathcal L_{L\sqcup L'}
=
\begin{pmatrix}
\mathcal L_L & \mathcal L_{LL'}\\[2mm]
\mathcal L_{LL'}^{\!\top} & \mathcal L_{L'}
\end{pmatrix}.
\]
If \(g\in\mathbb Z_k^m\) colors the surgery components and \(h\in\mathbb Z_k^r\) colors \(L'\), then
\begin{equation}\label{eq:RT-surgery-block}
\mathrm{RT}_{\mathcal C_k}(L\sqcup L';g,h)
=
\exp\!\Bigg(
\frac{\pi i}{k}\Big(
\langle g,\mathcal L_Lg\rangle
+
2\langle g,\mathcal L_{LL'}h\rangle
+
\langle h,\mathcal L_{L'}h\rangle
\Big)\Bigg).
\end{equation}

The Abelian surgery invariant is obtained by averaging over the labels on the surgery link and inserting the usual Kirby normalization. Using the normalized counting measure on \(\mathbb Z_k\), the invariant takes the form \begin{equation}\label{eq:Zk-surgery-invariant} Z_{\mathbb Z_k}(L'_h\subset M_L) = \mathcal N_k(L)\, \frac{1}{k^m} \sum_{g\in\mathbb Z_k^m} \mathrm{RT}_{\mathcal C_k}(L\sqcup L';g,h), \end{equation} where \(\mathcal N_k(L)\) is chosen so as to ensure invariance under Kirby moves. The required normalization is expressed in terms of the quadratic Gauss sums \begin{equation}\label{eq:apm} A_+(k):=\sum_{s\in\mathbb Z_k}\exp\!\Big(\frac{\pi i}{k}s^2\Big), \qquad A_-(k):=\sum_{s\in\mathbb Z_k}\exp\!\Big(-\frac{\pi i}{k}s^2\Big) = \overline{A_+(k)}. \end{equation} For even \(k\), the evaluation gives
\begin{equation}\label{eq:gauss-evaluation}
A_+(k)=k^{1/2} e^{\pi i/4},
\qquad
A_-(k)=k^{1/2}  e^{-\pi i/4}.
\end{equation}
Equivalently, if $a_\pm(k)$ denote the normalized Gauss sums used in the Mattes–Polyak– Reshetikhin presentation \cite{Mattes}, then  $A_\pm(k)=ka_\pm(k)$. If \(\sigma(\mathcal L_L)\) denotes the signature of the surgery matrix, then the Kirby normalization is \begin{equation}\label{eq:kirby-normalization}
\mathcal N^{\mathrm{raw}}_k(L)
=k^{-1/2}
A_+(k)^{\frac{-m-\sigma(\mathcal L_L)}{2}}
\,
A_-(k)^{\frac{-m+\sigma(\mathcal L_L)}{2}}.
\end{equation}
Substituting \eqref{eq:RT-surgery-block} into \eqref{eq:Zk-surgery-invariant} yields
\begin{equation}\label{eq:Zk-final}
\begin{aligned}
Z^{RT,\mathrm{raw}}_{\mathbb Z_k}(L'_h\subset M_L)
&=
A_+(k)^{\frac{-m-\sigma(\mathcal L_L)}{2}}
A_-(k)^{\frac{-m+\sigma(\mathcal L_L)}{2}}
 \\
& \times\sum_{g\in\mathbb Z_k^m}
\exp\!\Bigg(
\frac{\pi i}{k}\Big(
\langle g,\mathcal L_Lg\rangle
+
2\langle g,\mathcal L_{LL'}h\rangle
+
\langle h,\mathcal L_{L'}h\rangle
\Big)\Bigg).
\end{aligned}
\end{equation}

When \(L'=\varnothing\), this reduces to the closed invariant
\begin{equation}\label{eq:Zk-closed}
Z^{RT,\mathrm{raw}}_{\mathbb Z_k}( M_L)
= k^{-1/2}
A_+(k)^{\frac{-m-\sigma(\mathcal L_L)}{2}}
A_-(k)^{\frac{-m+\sigma(\mathcal L_L)}{2}}
\sum_{g\in\mathbb Z_k^m}
\exp\!\Big(\frac{\pi i}{k}\,\langle g,\mathcal L_Lg\rangle\Big).
\end{equation}

Let us briefly indicate why this is invariant under Kirby moves. Invariance under isotopy is immediate. Invariance under \(K_2\) follows from the fact that a handle slide replaces \(\mathcal L_L\) by a congruent matrix
\[
\mathcal L_L\longmapsto A^{\!\top}\mathcal L_LA,
\qquad A\in \mathrm{GL}(m,\mathbb Z),
\]
and the induced change of variables \(g\mapsto A^{-1}g\) is bijective on \((\mathbb Z_k)^m\). For \(K_1\), adjoining an unknot of framing \(\pm1\) produces an additional factor
\[\sum_{s\in\mathbb Z_k}\exp\!\Big(\pm\frac{\pi i}{k}s^2\Big)=A_\pm(k),
\]
which is canceled precisely by the normalization \eqref{eq:kirby-normalization}. Therefore \eqref{eq:Zk-final} defines a topological invariant of the surgery presentation. This invariant is the Reshetikhin--Turaev theory associated with the finite quadratic module \((\mathbb Z_k,q_k)\). It is the combinatorial theory that will later be compared with \(U(1)\) Chern--Simons theory.

\paragraph{\texorpdfstring{The case \(G=\mathbb R\)}{The case G=R}.}
Mattes, Polyak, and Reshetikhin also considered the Abelian surgery construction for the noncompact group \(G=\mathbb R\), see \cite{Mattes}. In this case one takes the standard continuous quadratic data
\[
q_{\mathbb R}(x)=e^{\pi i x^2},
\qquad
\Omega_{\mathbb R}(x,y)=\frac{q_{\mathbb R}(x+y)}{q_{\mathbb R}(x)q_{\mathbb R}(y)}
=e^{2\pi i xy},
\]
so that the link evaluation is again given by the same quadratic expression as in the finite case. If \(L=L_1\cup\cdots\cup L_m\) is a surgery link with linking matrix \(\mathcal L_L\), and \(L'\) is an auxiliary colored link with color \(h\in\mathbb R^r\), the corresponding invariant is formally obtained by replacing the finite average over \(\mathbb Z_k^m\) with an oscillatory Gaussian integral over \(\mathbb R^m\):
\[
Z_{\mathbb R}(L'_h\subset M_L)
=
b_+^{\frac{-m-\sigma(\mathcal L_L)}{2}}
b_-^{\frac{-m+\sigma(\mathcal L_L)}{2}}
\int_{\mathbb R^m}
\exp\!\Bigg(
\pi i\Big(
\langle x,\mathcal L_Lx\rangle
+
2\langle x,\mathcal L_{LL'}h\rangle
+
\langle h,\mathcal L_{L'}h\rangle
\Big)\Bigg)\,dx,
\]
where the normalization constants are the Fresnel integrals
\[
b_+=\int_{\mathbb R}e^{\pi i s^2}\,ds=e^{\pi i/4},
\qquad
b_-=\int_{\mathbb R}e^{-\pi i s^2}\,ds=e^{-\pi i/4},
\]
understood in the oscillatory sense. When \(\mathcal L_L\) is nondegenerate, this integral evaluates to the familiar Gaussian factor
\[
e^{\frac{\pi i}{4}\sigma(\mathcal L_L)}\,|\det \mathcal L_L|^{-1/2}
\]
times the phase obtained by completing the square, while in the degenerate case one obtains a distribution supported along the null directions. Thus the \(\mathbb R\)-theory should be viewed as the continuous Gaussian analogue of the finite \(\mathbb Z_k\)-theory. Although it is useful conceptually, it is not the combinatorial model relevant for the comparison with \(U(1)\) Chern--Simons theory at level \(k\), which is governed instead by the finite quadratic data \((\mathbb Z_k,q_k)\).

\subsection{Reciprocity formulas and the relation with \texorpdfstring{\(U(1)\)}{U(1)} Chern--Simons theory}
\label{subsec:reciprocity}

Mattes, Polyak, and Reshetikhin, and independently Murakami, Ohtsuki, and Okada observed that the RT Gauss sums appearing in \eqref{eq:Zk-closed} are closely related to the decomposition of the Abelian \(U(1)\) Chern--Simons partition function into topological sectors \cite{Murakami,Mattes}. In the Abelian case, unlike the simply connected non--Abelian case, one must sum over isomorphism classes of principal \(U(1)\)-bundles:
\[
Z_X^{\mathrm{CS}}
=
\sum_{ |P|\in H^2(X;\mathbb Z)}
\int_{\mathcal A_P/\mathcal G_P}
\exp\!\bigl(2\pi i k\,S_{X,P}(\Theta)\bigr)\,D\Theta,
\]
For a rational homology sphere, each bundle contains at most one gauge-equivalence class of flat connections, so this partition function reduces to a discrete sum over topological sectors. The surgery Gauss sums in \eqref{eq:Zk-closed} are therefore expected to match these flat-sector contributions term by term.

The bridge between the combinatorial and geometric pictures is provided by quadratic reciprocity. Let \(\mathcal L\) be a symmetric, nondegenerate integer \(m\times m\) matrix, and let \(r\ge 1\). Then one has the reciprocity formula \cite[Theorem 1]{deloup2005}:
\begin{equation}\label{eq:reciprocity-nondeg} \sum_{n\in\mathbb Z_r^m} \exp\!\Big(\frac{\pi i}{r}\,\langle n,\mathcal Ln\rangle\Big)
=
r^{m/2}
\Big(\det(\mathcal L/i)\Big)^{-1/2}
\sum_{l\in \mathbb Z^m/\mathcal L\mathbb Z^m}
\exp\!\big(-\pi i r\,\langle l,\mathcal L^{-1}l\rangle\big).
\end{equation}
This identity rewrites the finite Gauss sum on the left as a sum over the finite Abelian group \(\mathbb Z^m/\mathcal L\mathbb Z^m\), which in surgery presentations is canonically identified with the torsion subgroup \(T=\Tors H_1(M;\mathbb Z)\cong H^2(M;\mathbb Z)\). Thus the right-hand side has exactly the form one expects from a sum over topological sectors in $U(1)$ Chern--Simons theory.

When \(\mathcal L\) is degenerate, an analogous formula still holds after separating the nondegenerate and null directions. Let \(\mathcal L_{\mathrm{reg}}\) denote the restriction of \(\mathcal L\) to \((\ker\mathcal L)^\perp\), and let \[ d=\frac12\dim\ker(\mathcal L\otimes\mathbb R). \] Then one has the generalized reciprocity formula
\begin{equation}\label{eq:reciprocity-deg}
\sum_{n\in\mathbb Z_r^m}
\exp\!\Big(\frac{\pi i}{r}\,\langle n,\mathcal Ln\rangle\Big)
=
r^d
\Big(\det(\mathcal L_{\mathrm{reg}}/ir)\Big)^{-1/2}
\sum_{l\in (\mathbb Z^m\cap (\operatorname{Im}\mathcal L)^\#)/\operatorname{Im}\mathcal L}
\exp\!\big(-\pi i r\,\langle l,\mathcal L^{-1}l\rangle\big),
\end{equation}
where \((\operatorname{Im}\mathcal L)^\#\) denotes the dual lattice with respect to the standard pairing. The degenerate formula is essential from the TQFT point of view, since intermediate bordisms produced by cutting and gluing naturally lead to singular surgery matrices. A useful refinement of \eqref{eq:reciprocity-nondeg} was later given by Deloup and Turaev \cite{deloup2005}, who made the signature phase explicit: \begin{equation}\label{eq:deloup-turaev}
\sum_{n\in\mathbb Z_r^m}
\exp\!\Big(\frac{\pi i}{r}\,\langle n,\mathcal Ln\rangle\Big)
=
r^{m/2}\,
e^{\frac{\pi i}{4}\sigma(\mathcal L)}\,
|\det \mathcal L|^{-1/2}
\sum_{l\in \mathbb Z^m/\mathcal L\mathbb Z^m}
\exp\!\big(-\pi i r\,\langle l,\mathcal L^{-1}l\rangle\big).
\end{equation}
This is equivalent to \eqref{eq:reciprocity-nondeg} once one fixes the square-root branch by the signature of \(\mathcal L\).

Applied to the invariant \eqref{eq:Zk-closed}, these reciprocity formulas show that the surgery Gauss sums reduce to invariants determined by the torsion subgroup of \(H_1(M;\mathbb Z)\) together with the induced quadratic refinement of the linking pairing. This is precisely the form that arises in Abelian \(U(1)\) Chern--Simons theory. In particular, the combinatorial invariant attached to \((\mathbb Z_k,q_k)\) already contains the same finite quadratic data that control the topological sectors of the geometric theory.

Historically, Mattes, Polyak, and Reshetikhin formulated this correspondence as a compelling expectation rather than as a complete theorem. The missing ingredient was a full comparison between the surgery Gauss sums and a geometric quantization model for \(U(1)\) Chern--Simons theory. This is exactly the comparison carried out later in the present work.

\subsection{\texorpdfstring{The obstruction for \(G=U(1)\)}{The obstruction for G=U(1)}}

It is natural to ask whether one can define an Abelian Reshetikhin--Turaev invariant directly from the compact group \(G=U(1)\), in exact analogy with the cases \(G=\mathbb Z_k\) and \(G=\mathbb R\). We now explain why this fails under the natural continuity assumptions on the ribbon data.

Let \(G\) be an Abelian topological group equipped with a normalized Haar probability measure \(dg\). In the Abelian MPR formalism, one starts with a continuous bicharacter \(
\Omega:G\times G\rightarrow U(1)
\) and a continuous quadratic refinement \(\theta:G\rightarrow U(1)\), satisfying
\begin{equation}\label{eq:quadratic-refinement}
\theta(x+y)=\Omega(x,y)\,\theta(x)\,\theta(y),
\qquad
\theta(0)=1.
\end{equation}
Given such data, the evaluation of a colored framed link
\[
J=J_1\cup\cdots\cup J_r\subset S^3,
\qquad
x=(x_1,\dots,x_r)\in G^r,
\]
is
\begin{equation}\label{eq:general-link-eval}
\langle J_x\rangle
=
\prod_{i=1}^r \theta(x_i)^{\operatorname{fr}(J_i)}
\prod_{1\le i<j\le r}\Omega(x_i,x_j)^{2\operatorname{lk}(J_i,J_j)}.
\end{equation}
If \(L\) is a surgery link for a closed oriented \(3\)-manifold \(M_L\), the raw surgery integral is
\begin{equation}\label{eq:raw-U1-integral}
Z_{U(1)}^{\mathrm{raw}}(M_L)
=
\int_{G^m}\langle L_g\rangle\,d^mg.
\end{equation}

For \(G=\mathbb Z_k\), one has the nondegenerate bicharacter \(\Omega_k\) and quadratic refinement \(q_k\), and the resulting invariant is nontrivial. For \(G=\mathbb R\), one may take
\(
\Omega(x,y)=e^{ixy}, \text{ with }
\theta(x)=e^{ix^2},
\)
and the surgery integral becomes an oscillatory Gaussian integral. For \(G=U(1)\), however, the situation is fundamentally different.

\begin{theorem}\label{thm:no-U1-bicharacter}
Every continuous bicharacter
\(
\Omega:U(1)\times U(1)\rightarrow U(1)
\)
is trivial. Consequently, every continuous \(\theta:U(1)\to U(1)\) satisfying \eqref{eq:quadratic-refinement} is a character of \(U(1)\).
\end{theorem}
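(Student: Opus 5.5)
The plan is to reduce everything to the structure of the Pontryagin dual $\widehat{U(1)}\cong\mathbb Z$ together with the fact that a continuous homomorphism $U(1)\to\mathbb Z$ must vanish.

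\emph{Step 1: currying the bicharacter.} Fix $x\in U(1)$. The map $y\mapsto\Omega(x,y)$ is a continuous character of $U(1)$, so there is a unique integer $n(x)\in\mathbb Z$ with
\[
\Omega(x,y)=y^{\,n(x)}\qquad\text{for all } y\in U(1).
\]
Multiplicativity of $\Omega$ in the first variable gives $y^{n(xx')}=y^{n(x)}y^{n(x')}$ for all $y$. Since characters of $U(1)$ are determined by their exponent, $n:U(1)\to\mathbb Z$ is a group homomorphism.

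\emph{Step 2: continuity of $n$.} This is the only point needing care. I would argue as follows. Let $x\to x_0$ and evaluate at a fixed $y=e^{2\pi i t}$ with $t$ irrational. Joint continuity of $\Omega$ gives $y^{n(x)}\to y^{n(x_0)}$. This alone does not force $n(x)=n(x_0)$, so I would use uniformity in $y$ instead. The map $x\mapsto\Omega(x,\cdot)$ is continuous into $C(U(1),U(1))$ with the sup norm, because $\Omega$ is uniformly continuous on the compact space $U(1)\times U(1)$. Distinct characters $y^a$ and $y^b$ with $a\neq b$ satisfy
\[
\sup_y\,|y^a-y^b|=2.
\]
Hence $n$ is locally constant. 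Because $U(1)$ is connected, $n$ is constant, and $n(1)=0$ forces $n\equiv0$. Alternatively, a homomorphism $U(1)\to\mathbb Z$ kills the divisible group $U(1)$ outright: if $x=z^m$ then $n(x)=m\,n(z)$ for every $m$, so $n(x)=0$. In fact this gives triviality without any continuity in $x$ at all; I would present it as the main argument and keep the connectedness argument as a remark. So the real content is just the continuity in $y$. Either way, $\Omega\equiv1$.

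\emph{Step 3: the consequence for $\theta$.} With $\Omega\equiv1$, relation \eqref{eq:quadratic-refinement} reads
\[
\theta(x+y)=\theta(x)\,\theta(y),\qquad \theta(0)=1,
\]
written additively. Thus $\theta$ is a continuous homomorphism $U(1)\to U(1)$, that is, a character $z\mapsto z^n$.

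The main obstacle is essentially bookkeeping: one must make sure the ``character'' identification in Step 1 uses only continuity in the second variable, and that the divisibility argument in Step 2 is applied to a genuine homomorphism into $\mathbb Z$. Both are routine.
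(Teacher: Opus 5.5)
Your proof is correct, and your main argument takes a different route from the paper. Your fallback connectedness argument is essentially the paper's proof. The paper curries $\Omega$ to get $\Omega(z,w)=w^{n(z)}$ and asserts that $z\mapsto n(z)$ is continuous because $\Omega$ is. It then uses connectedness of $U(1)$ to make $n$ constant, and kills the constant $n_0$ via multiplicativity in the first variable ($w^{n_0}=w^{2n_0}$).

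The paper never justifies the continuity of $n$. As you note, pointwise convergence $y^{n(x)}\to y^{n(x_0)}$ at a fixed $y$ does not give it. Your uniform-continuity argument, using $\sup_y|y^a-y^b|=2$ for $a\neq b$, is exactly the missing step.

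Your primary route is genuinely different and cleaner. Once multiplicativity in the first variable shows $n:U(1)\to\mathbb Z$ is a homomorphism, divisibility of $U(1)$ gives $m\mid n(x)$ for every $m\ge 1$, so $n\equiv 0$. This uses no topology in the first variable at all. It therefore proves a stronger statement: any bicharacter that is merely continuous in the second variable is trivial. It also avoids the only delicate analytic step. The paper's connectedness argument, by contrast, genuinely needs joint continuity, or at least continuity of $n$.

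Your Step 3, deducing that $\theta$ is a character once $\Omega\equiv 1$, coincides with the paper's.
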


\begin{proof}
Fix \(z\in U(1)\). Then the map \(w\mapsto \Omega(z,w)\) is a continuous homomorphism \(U(1)\to U(1)\), hence a character. Therefore
\(
\Omega(z,w)=w^{n(z)}
\) for some \(n(z)\in\mathbb Z\). Since \(\Omega\) is continuous, the function \(z\mapsto n(z)\) is continuous as a map \(U(1)\to\mathbb Z\). Because \(U(1)\) is connected and \(\mathbb Z\) is discrete, \(n(z)\) is constant: \(n(z)\equiv n_0\). Thus
\[
\Omega(z,w)=w^{n_0}
\qquad
(\forall z,w\in U(1)).
\]
Now impose multiplicativity in the first variable:
\[
\Omega(z_1z_2,w)=\Omega(z_1,w)\Omega(z_2,w).
\]
The left-hand side is \(w^{n_0}\), while the right-hand side is \(w^{2n_0}\), so \(n_0=0\). Hence \(\Omega\equiv 1\). If \(\Omega\equiv 1\), then \eqref{eq:quadratic-refinement} reduces to $\theta(x+y)=\theta(x)\theta(y),$ so \(\theta\) is a continuous character of \(U(1)\).
\end{proof}

Thus the only continuous Abelian RT data on \(U(1)\) are essentially trivial:
\[
\Omega(z,w)=1,
\qquad
\theta(z)=z^r
\quad (r\in\mathbb Z).
\]
Substituting \(\Omega\equiv 1\) into \eqref{eq:general-link-eval} shows that all linking contributions disappear:
\begin{equation}\label{eq:trivial-U1-link-eval}
\langle L_g\rangle
=
\prod_{i=1}^m \theta(g_i)^{\operatorname{fr}(L_i)}.
\end{equation}
Hence the raw surgery integral factorizes:
\begin{equation}\label{eq:trivial-U1-factorization}
Z_{U(1)}^{\mathrm{raw}}(M_L)
=
\prod_{i=1}^m
\left(
\int_{U(1)} \theta(g_i)^{\operatorname{fr}(L_i)}\,dg_i
\right).
\end{equation}
If \(\theta\equiv 1\), then every factor is \(1\), so the integral is identically \(1\). If \(\theta\) is a nontrivial character, then each factor is the integral of a nontrivial character of \(U(1)\), hence vanishes as soon as the corresponding framing is nonzero. Thus the resulting quantity is either trivial or generically zero.

\begin{theorem}\label{thm:U1-RT-trivial}
Assume \(G=U(1)\) and suppose the ribbon data \((\Omega,\theta)\) are continuous and satisfy \eqref{eq:quadratic-refinement}. Then the associated Abelian surgery integral \eqref{eq:raw-U1-integral} does not define a nontrivial MPR-type invariant: either it is identically \(1\) or it vanishes for generic surgery presentations.
\end{theorem}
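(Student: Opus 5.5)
The plan is to reduce the claim to Theorem~\ref{thm:no-U1-bicharacter} and then evaluate the surgery integral \eqref{eq:raw-U1-integral} explicitly.

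First, Theorem~\ref{thm:no-U1-bicharacter} forces $\Omega\equiv 1$ and shows that $\theta$ is a continuous character. I will therefore write $\theta(z)=z^r$ for some fixed $r\in\mathbb Z$. Substituting into \eqref{eq:general-link-eval} removes every linking contribution, leaving \eqref{eq:trivial-U1-link-eval}. Since the normalized Haar measure on $U(1)^m$ is the product measure, Fubini gives the factorization \eqref{eq:trivial-U1-factorization}:
\[
Z^{\mathrm{raw}}_{U(1)}(M_L)=\prod_{i=1}^m\int_{U(1)} g_i^{\,r\,\operatorname{fr}(L_i)}\,dg_i .
\]

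Second, I will evaluate each factor with the orthogonality relation $\int_{U(1)} z^n\,dz=\delta_{n,0}$. This splits into two cases.
\begin{itemize}
\item If $r=0$, every factor equals $1$, so $Z^{\mathrm{raw}}_{U(1)}\equiv 1$ on all surgery presentations.
\item If $r\neq 0$, the integral equals $1$ when all framings vanish and $0$ as soon as some component has nonzero framing.
\end{itemize}
In the second case I will make ``generic'' precise by noting that, for a fixed link type, the presentations with all framings zero form a single point in $\mathbb Z^m$ of possible framing vectors. I will also note that every $K_1$ move adds a $\pm1$-framed unknot, which forces the value $0$. Hence the integral vanishes on every presentation containing a component with nonzero framing, and in particular on all presentations obtained by any Kirby move of type $K_1$.

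Third, I will argue that this is not a nontrivial invariant. The value depends only on whether the framing vector is zero, not on the linking matrix $\mathcal L_L$, so it carries no information about $H_1(M_L;\mathbb Z)$ or its linking form. Moreover, it is not invariant under $K_1$: the $0$-framed unknot and the $\pm1$-framed unknot both present $S^3$ (respectively $S^1\times S^2$ and $S^3$), yet the values differ. Any Kirby-type normalization as in \eqref{eq:kirby-normalization} would have to divide by $\int_{U(1)}\theta^{\pm1}=0$ when $r\neq0$, which is impossible. When $r=0$, the normalization constants are all $1$ and the invariant is constant.

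The main obstacle is stating precisely what ``does not define a nontrivial MPR-type invariant'' means. I will settle on the following formulation: no normalization of the form \eqref{eq:kirby-normalization}, built from the values of $\int\theta^{\pm1}$, turns \eqref{eq:raw-U1-integral} into a nonconstant Kirby-invariant function. The remaining steps are routine.
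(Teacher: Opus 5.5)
Your proposal is correct and follows the paper's proof essentially verbatim: you invoke Theorem~\ref{thm:no-U1-bicharacter} to get $\Omega\equiv 1$ and $\theta(z)=z^r$, factor the integral, use $\int_{U(1)}z^n\,dz=\delta_{n,0}$, and split into the cases $r=0$ and $r\neq 0$. Your extra third step, that no Kirby normalization built from $\int_{U(1)}\theta^{\pm1}=0$ can exist when $r\neq0$, goes slightly beyond the paper and is sound, with one slip: the $0$-framed unknot presents $S^1\times S^2$, not $S^3$, so the correct witness for failure of $K_1$-invariance is the empty link (value $1$) versus a $\pm1$-framed unknot (value $0$), both presenting $S^3$.
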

\begin{proof}
By Theorem~\ref{thm:no-U1-bicharacter}, $\Omega\equiv 1$ and $\theta$ is a character.   The integral of a nontrivial character over $U(1)$ is zero, so the product is either $1$ (if $\theta\equiv 1$) or else vanishes whenever at least one exponent $r\,\operatorname{fr}(L_i)$ is nonzero.  This proves the claim.
\end{proof}

Theorems \ref{thm:no-U1-bicharacter} and \ref{thm:U1-RT-trivial} explain why the correct combinatorial partner of \(U(1)\) Chern--Simons theory is not a RT theory labeled by the compact group \(U(1)\), but rather the theory associated with the finite quadratic data \((\mathbb Z_k,q_k)\). The nontrivial topological content of \(U(1)\) Chern--Simons theory is encoded not by a continuous bicharacter on the Lie group \(U(1)\) itself, but by the finite quadratic data arising from the torsion linking form and its quadratic refinements. In the combinatorial language, these are captured precisely by the finite group of \(k\)-th roots of unity, equivalently by \(\mathbb Z_k\).

For this reason, the RT theory attached to $(Z_k,q_k)$ should be regarded as the correct Abelian Reshetikhin--Turaev avatar of $U(1)$ Chern--Simons theory at level $k$. In the next sections we will show that this correspondence is not merely heuristic:  the raw closed RT surgery scalar $Z^{RT,\mathrm{raw}}_{Z_k}$ agrees with the raw closed $U(1)$ Chern--Simons scalar $Z^{CS,\mathrm{raw}}_{U(1),k}$ on closed $3$-manifolds. We then pass to the Maslov-corrected closed invariants $Z^{RT}_{Z_k}$ and $Z^{CS}_{U(1),k}$ introduced below for extended TQFTs.

\section{Abelian Reshetikhin–Turaev Theory with Boundary}
Let $\mathcal{C}$ be a modular tensor category over $\mathbb{C}$, i.e.\ $\mathcal{C}$ is semisimple, $\mathbb{C}$--linear, ribbon, has finitely many isomorphism classes of simple objects, simple tensor unit, and nondegenerate $S$--matrix.  The Reshetikhin--Turaev construction assigns to each closed oriented $3$--manifold $M$ a scalar $Z_{\mathcal{C}}(M)\in\mathbb{C}$ defined via surgery on framed links and RT evaluation of $\mathcal{C}$--colored ribbon graphs.  Turaev \cite{Turaev1994} extends this to a $(2+1)$--dimensional TQFT by enlarging the bordism category and introducing extra structure on surfaces to control the framing anomaly.

Recall that an extended surface is a pair $(\Sigma,\lambda)$, where $\Sigma$ is a closed oriented surface and $\lambda\subset H_1(\Sigma;\mathbb{R})$ is a Lagrangian subspace for the intersection form.  The choice of $\lambda$ corrects the dependence of RT theory on choices of $2$--framings (equivalently, on the projective anomaly under gluing).  To each extended surface $(\Sigma,\lambda)$ the theory assigns a finite-dimensional complex vector space $\mathcal V_{\mathcal{C}}(\Sigma,\lambda)$.

A convenient model for $\mathcal V_{\mathcal{C}}(\Sigma,\lambda)$ is obtained from a handlebody $H$ with $\partial H=\Sigma$, such that $\ker\!\big(H_1(\Sigma;\mathbb{R})\rightarrow H_1(H;\mathbb{R})\big)=\lambda.$ One considers $\mathcal{C}$--colored ribbon graphs (or, equivalently, $\mathcal{C}$--colored framed tangles) embedded in $H$ with coupons labeled by morphisms in $\mathcal{C}$, modulo the local relations induced by the ribbon structure (isotopy, braiding, twist, duality, fusion).  The resulting skein space depends only on $(\Sigma,\lambda)$, and different choices of $H$ with the same boundary data produce canonically isomorphic spaces.

A compact oriented $3$--manifold $M$ with boundary is regarded as an extended bordism
\begin{equation}
M:(\Sigma_{\mathrm{in}},\lambda_{\mathrm{in}})\longrightarrow(\Sigma_{\mathrm{out}},\lambda_{\mathrm{out}}), \qquad \partial M=\overline{\Sigma}_{\mathrm{in}}\sqcup \Sigma_{\mathrm{out}}. \end{equation} Turaev \cite{Turaev1994} defines a linear map \begin{equation} Z_{\mathcal{C}}(M)\in\mathrm{Hom}\!\big(\mathcal V_{\mathcal{C}}(\Sigma_{\mathrm{in}},\lambda_{\mathrm{in}}),\,\mathcal V_{\mathcal{C}}(\Sigma_{\mathrm{out}},\lambda_{\mathrm{out}})\big) \end{equation} by a gluing procedure: for $v\in \mathcal V_{\mathcal{C}}(\Sigma_{\mathrm{in}},\lambda_{\mathrm{in}})$ and $w\in \mathcal V_{\mathcal{C}}(\Sigma_{\mathrm{out}},\lambda_{\mathrm{out}})^{*}$, glue $M$ to handlebodies representing $v$ and $w$ to obtain a closed $3$--manifold $M_{v,w}$ (with a closed colored ribbon graph), and set
\begin{equation}
\langle w,\,Z_{\mathcal{C}}(M)(v)\rangle:=Z_{\mathcal{C}}(M_{v,w}).
\end{equation}
The resulting map is independent of the auxiliary choices $v,w$ and is natural with respect to isotopy of $M$ relative to the boundary. Let \[ M_1:(\Sigma_0,\lambda_0)\to(\Sigma_1,\lambda_1), \qquad M_2:(\Sigma_1,\lambda_1)\to(\Sigma_2,\lambda_2), \] be composable extended bordisms.  In general the naive composition $Z_{\mathcal{C}}(M_2)\circ Z_{\mathcal{C}}(M_1)$ differs from $Z_{\mathcal{C}}(M_2\circ M_1)$ by a scalar governed by the Maslov index $\mu(\lambda_0,\lambda_1,\lambda_2)\in\mathbb{Z}$, computed from the triple of Lagrangian subspaces arising from the gluing along $\Sigma_1$.  Turaev proves the gluing law
\begin{equation}\label{eq:Maslov-glue}
Z_{\mathcal{C}}(M_2\circ M_1) = \kappa^{\,\mu(\lambda_0,\lambda_1,\lambda_2)}\; Z_{\mathcal{C}}(M_2)\circ Z_{\mathcal{C}}(M_1), \end{equation} 
where $\kappa\in\mathbb{C}^{\times}$ is a fixed root of unity determined by $\mathcal{C}$ (equivalently, by its anomaly parameter).  Incorporating this Maslov correction yields strict functoriality, and the assignments
\[
(\Sigma,\lambda)\longmapsto \mathcal V_{\mathcal{C}}(\Sigma,\lambda),
\qquad
M\longmapsto Z_{\mathcal{C}}(M),
\]
define a symmetric monoidal $(2+1)$--dimensional TQFT.

\subsection{Turaev's Extended RT Formalism}
\begin{theorem}[\cite{Turaev1994},  Theorem 9.2.1]\label{thm:RT-boundary-TQFT}
Let $\mathcal{C}$ be a modular tensor category over $\mathbb{C}$ (semisimple, $\mathbb{C}$--linear, ribbon, finitely many simple objects, simple tensor unit, and nondegenerate $S$--matrix).  Let $\Cob^{\mathrm{ext}}_{2+1}$ denote Turaev's category of \emph{extended cobordisms} whose objects are \emph{extended surfaces} $(\Sigma,\lambda)$, where $\Sigma$ is a closed oriented surface and $\lambda\subset H_1(\Sigma;\mathbb R)$ is a Lagrangian subspace for the intersection pairing, and whose morphisms are compact oriented $3$--dimensional cobordisms equipped with the additional integer weight (or equivalent extended structure) used to remove the framing anomaly.  Then there exists a symmetric monoidal functor
\[
\widetilde Z_{\mathcal C}:\Cob^{\mathrm{ext}}_{2+1}\longrightarrow \mathrm{Vect}_{\mathbb C},
\]
which assigns to every extended surface $(\Sigma,\lambda)$ a finite-dimensional complex vector space $V_{\mathcal C}(\Sigma,\lambda)$ and to every extended cobordism
$M:(\Sigma_{\mathrm{in}},\lambda_{\mathrm{in}})\to(\Sigma_{\mathrm{out}},\lambda_{\mathrm{out}})$ a linear map
\[
\widetilde Z_{\mathcal C}(M)\in
\mathrm{Hom}\!\bigl(\mathcal V_{\mathcal C}(\Sigma_{\mathrm{in}},\lambda_{\mathrm{in}}),\,\mathcal V_{\mathcal C}(\Sigma_{\mathrm{out}},\lambda_{\mathrm{out}})\bigr),
\]
such that:
\begin{enumerate}
\item[\textup{(i)}] $V_{\mathcal C}(\Sigma,\lambda)$ is well-defined up to canonical isomorphism and depends
only on the isomorphism class of $(\Sigma,\lambda)$.

\item[\textup{(ii)}] $\widetilde Z_{\mathcal C}(M)$ is well-defined (independent of auxiliary choices used in
its construction) and depends only on the isomorphism class of $M$ as an extended cobordism.

\item[\textup{(iii)}] $\widetilde Z_{\mathcal C}$ is strictly functorial under gluing and symmetric monoidal
with respect to disjoint union.
\end{enumerate}
\end{theorem}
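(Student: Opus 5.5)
The statement is Turaev's theorem, which the paper cites rather than proves. The plan therefore follows the structure of Turaev's construction, specialized where useful to a general modular $\mathcal C$.

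\textbf{Step 1: state spaces.} First I construct $V_{\mathcal C}(\Sigma,\lambda)$ concretely. Choose a parametrization of $\Sigma$ by a standard surface $\Sigma_g$. Set
\[
V_{\mathcal C}(\Sigma_g)=\bigoplus_{i_1,\dots,i_g}\mathrm{Hom}_{\mathcal C}\bigl(\mathbf 1,\,V_{i_1}\otimes V_{i_1}^*\otimes\cdots\otimes V_{i_g}\otimes V_{i_g}^*\bigr),
\]
with the sum running over simple objects. This is the skein module of the standard handlebody $H_g$. Two parametrizations differ by an element of the mapping class group. That element acts projectively through the $S$, $T$ and braiding matrices, by surgery on the mapping cylinder.

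To make the action linear, I use the Lagrangian $\lambda$. Each identification operator is rescaled by $\Delta^{\mu}$, where $\Delta=\sum_i\theta_i^{-1}\dim(V_i)^2$ and $\mu$ is the Maslov index of the triple of Lagrangians: $\lambda$ and the kernels of $H_1(\Sigma)\to H_1(H)$ for the two handlebodies. Maslov cocycle identities then make the rescaled identifications compatible on triple overlaps. Taking the colimit over parametrizations gives item (i): $V_{\mathcal C}(\Sigma,\lambda)$ is canonical up to canonical isomorphism.

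\textbf{Step 2: operators.} Next, for an extended cobordism $M$ with weight $n$, I define $\widetilde Z_{\mathcal C}(M)$ as follows.
\begin{itemize}
\item Glue standard handlebodies onto the boundary to obtain a closed manifold.
\item Present that manifold by surgery on a framed link in $S^3$, with the colored cores of the handlebodies inserted.
\item Evaluate with the RT functor, normalized by $\mathcal D^{-b_1-m-1}\Delta^{-\sigma(L)}$, where $\mathcal D^2=\sum_i\dim(V_i)^2$.
\item Multiply by $(\Delta/\mathcal D)^{n-\mu}$, where $\mu$ is the Maslov correction comparing the boundary Lagrangians with $\ker(H_1(\partial M)\to H_1(M))$.
\end{itemize}

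\textbf{Step 3: well-definedness (item (ii)).} The output must not depend on the surgery presentation, which by Kirby's theorem reduces to checking the two Kirby moves. The handle slide is invariant because of the dominance and killing properties of the Kirby color $\Omega=\sum_i\dim(V_i)V_i$. Nondegeneracy of $S$ is exactly what makes the encircling lemma hold. The $\pm1$-blowup multiplies the evaluation by $\Delta^{\pm1}$, and the signature term cancels this. Independence of the identification of $\partial M$ with standard surfaces follows from Step 1.

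\textbf{Step 4: gluing (item (iii)).} This is the main obstacle, namely strict functoriality. For $M_2\circ M_1$, I combine the two surgery links into one. The composite evaluation equals the composition of operators, because inserting $\sum_i$ over a basis of $V_{\mathcal C}(\Sigma_1)$ together with its dual basis realizes the identity on the gluing handlebody. This uses semisimplicity and the nondegenerate pairing given by $S$.

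The remaining discrepancy comes from Novikov–Wall non-additivity of the signature. The signature of the glued surgery 4-manifold differs from the sum of the two signatures by the Maslov index $\mu(\lambda_0,\lambda_1,\lambda_2)$. This produces exactly the factor $\kappa^{\mu}$ of \eqref{eq:Maslov-glue}, with $\kappa=\Delta/\mathcal D$. Absorbing $\kappa^{\mu}$ into the additive weights $n$ yields strict functoriality.

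Symmetric monoidality is then immediate: surgery presentations of a disjoint union can be taken as split links, so RT evaluations multiply, and the handlebody models give $V_{\mathcal C}(\Sigma_1\sqcup\Sigma_2)\cong V_{\mathcal C}(\Sigma_1)\otimes V_{\mathcal C}(\Sigma_2)$. Finally, the cylinder $\Sigma\times I$ with weight $0$ acts as the identity, so $\widetilde Z_{\mathcal C}$ is a functor.
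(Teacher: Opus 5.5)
Your outline has the right architecture, and it is Turaev's own: state spaces for parametrized surfaces, mapping-cylinder identifications rescaled by Maslov indices, a normalized surgery formula, Kirby moves, and Wall non-additivity. It is more explicit than the paper's proof, which only recalls the handlebody skein model and the pairing construction and cites Turaev for independence of choices and for the projective gluing law. The gap is that the constants you commit to are wrong, and with them Step~3 fails.

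Adjoining a $\pm1$-framed unknot colored by $\sum_i\dim(V_i)V_i$ multiplies the evaluation by $\Delta_\pm:=\sum_i\theta_i^{\pm1}\dim(V_i)^2$, not by $\Delta^{\pm1}$. With your $\Delta=\Delta_-$ one has $\Delta_+=\mathcal D^2\Delta^{-1}$, since $\Delta_+\Delta_-=\mathcal D^2$ in a modular category. At the same time $m$ increases by one. Under your normalization $\mathcal D^{-b_1-m-1}\Delta^{-\sigma(L)}$, a $(-1)$-blowup therefore rescales the result by $\Delta^2/\mathcal D$ and a $(+1)$-blowup by $\mathcal D/\Delta^2$. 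For $\mathcal C(\mathbb Z_k,q_k)$, for instance, $\Delta^2/\mathcal D=-i\sqrt k$, so the result is not a Kirby invariant. What works is Turaev's factor $\Delta^{\sigma(L)}\mathcal D^{-\sigma(L)-m-1}=(\Delta/\mathcal D)^{\sigma(L)}\mathcal D^{-m-1}$: the $\mathcal D$-exponent must contain $\sigma(L)$, and the exponent of $\Delta$ has the opposite sign. The extra factor $\mathcal D^{-b_1}$ cannot be rescued by any convention. Item (iii) forces $\widetilde Z_{\mathcal C}(S^2\times S^1)=\dim V_{\mathcal C}(S^2)=1$. The $0$-framed unknot gives $\mathcal D^{-2}\cdot\mathcal D^{2}=1$ with Turaev's factor, but $\mathcal D^{-3}\cdot\mathcal D^{2}=\mathcal D^{-1}$ with yours.

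The same slip appears in Step~1. The mapping-class action is projective with cocycle a power of $\Delta/\mathcal D$, which is your own $\kappa$ from Step~4. Rescaling by $\Delta^{\mu}$ therefore does not cancel it: even with the right sign of the exponent it leaves a residual power of $\mathcal D$, so the identifications are not compatible on triple overlaps. You need $\kappa^{\pm\mu}$, with the sign dictated by the Maslov cocycle identity.

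Your monoidality argument is also incorrect. Surgery on a split link $L_1\sqcup L_2\subset S^3$ presents $M_1\#M_2$, not $M_1\sqcup M_2$. With the correct normalization, the normalized closed invariant $\tau$ satisfies $\tau(M_1\#M_2)=\mathcal D\,\tau(M_1)\tau(M_2)$, because $\tau(S^3)=\mathcal D^{-1}$. Multiplicativity under disjoint union has to be imposed by defining $\widetilde Z_{\mathcal C}$ componentwise on disconnected bordisms, as Turaev does, and the gluing theorem of Step~4 must then be proved in that generality.

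With these corrections your outline is a faithful sketch of Turaev's proof, and a more informative one than the paper gives.
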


\begin{proof}
This is Turaev's extension of the Reshetikhin--Turaev invariants to a (non-projective) $(2+1)$--TQFT; we
recall the construction in the form used in \cite{Turaev1994}.
Fix an extended surface $(\Sigma,\lambda)$.  Choose a handlebody $H$ with $\partial H=\Sigma$ such that
\[
\ker\!\bigl(H_1(\Sigma;\mathbb R)\longrightarrow H_1(H;\mathbb R)\bigr)=\lambda.
\]
Define $V_{\mathcal C}(\Sigma,\lambda)$ to be the $\mathbb C$--vector space spanned by $\mathcal C$--colored ribbon graphs embedded in $H$, modulo the local relations dictated by the ribbon structure of $\mathcal C$ (isotopy, braiding, twist, duality, and fusion relations as in the RT graphical calculus).  Since $\mathcal C$ is modular, this skein space is finite-dimensional and depends only on $(\Sigma,\lambda)$: different choices of handlebodies with the same boundary data yield canonically isomorphic skein spaces. Turaev proves invariance under the allowed changes of the handlebody presentation within the extended surface formalism.  This proves (i); see \cite{Turaev1994} for the handlebody model of state spaces and its independence of choices.

Let $M:(\Sigma_{\mathrm{in}},\lambda_{\mathrm{in}})\to(\Sigma_{\mathrm{out}},\lambda_{\mathrm{out}})$ be an extended cobordism. Given $v\in \mathcal V_{\mathcal C}(\Sigma_{\mathrm{in}},\lambda_{\mathrm{in}})$ and $w\in \mathcal V_{\mathcal C}(\Sigma_{\mathrm{out}},\lambda_{\mathrm{out}})^{*}$, choose handlebodies $H_{\mathrm{in}}(v)$ and $H_{\mathrm{out}}(w)$ representing these boundary states in the handlebody model. Form the closed extended $3$--manifold \[ M_{v,w}:= H_{\mathrm{out}}(w)\ \cup_{\Sigma_{\mathrm{out}}}\ M\ \cup_{\Sigma_{\mathrm{in}}}\ H_{\mathrm{in}}(v), \] and define the matrix element of the desired operator by
\begin{equation}\label{eq:Turaev-pairing-def-fixed}
\big\langle w,\ Z_{\mathcal C}(M)(v)\big\rangle
\;:=\;
Z_{\mathcal C}(M_{v,w}),
\end{equation}
where the right-hand side is the closed RT invariant computed by surgery and $\mathcal C$--colored link evaluation.  Turaev proves that $Z_{\mathcal C}(M_{v,w})$ is independent of the chosen representatives of $v$ and $w$ because changing representatives modifies the closed colored ribbon graph by skein relations and isotopy, which do not change the closed invariant.  Since the pairing between $\mathcal V_{\mathcal C}(\Sigma_{\mathrm{out}},\lambda_{\mathrm{out}})$ and its dual is nondegenerate (finite dimension), the matrix elements \eqref{eq:Turaev-pairing-def-fixed} determine a unique linear map $Z_{\mathcal C}(M)$.  This proves (ii) for the uncorrected operator $Z_{\mathcal C}(M)$.

For composable extended cobordisms $M_1:(\Sigma_0,\lambda_0)\to(\Sigma_1,\lambda_1)$ and $M_2:(\Sigma_1,\lambda_1)\to(\Sigma_2,\lambda_2)$, Turaev shows that the operators obtained from the pairing construction satisfy a projective gluing law of the form
\[
Z_{\mathcal C}(M_2\circ M_1)
=
\kappa_{\mathcal C}^{\,\mu(\lambda_0,\lambda_1,\lambda_2)}\,
Z_{\mathcal C}(M_2)\circ Z_{\mathcal C}(M_1),
\]
where $\mu(\lambda_0,\lambda_1,\lambda_2)$ is the Maslov index of the triple of Lagrangians induced on the gluing surface and $\kappa_{\mathcal C}$ is the anomaly root of unity determined by $\mathcal C$; this is the standard mechanism by which extended surfaces and integer weights remove the framing anomaly, see the gluing theorem and Maslov correction in \cite{Turaev1994}.  By enlarging the bordism category to include the integer weight so that weights add with the Maslov index under gluing, and by defining the corrected operator
\[
\widetilde Z_{\mathcal C}(M):=\kappa_{\mathcal C}^{-\mathrm{wt}(M)}\,Z_{\mathcal C}(M),
\]
Turaev obtains strict functoriality:
\[
\widetilde Z_{\mathcal C}(M_2\circ M_1)=\widetilde Z_{\mathcal C}(M_2)\circ \widetilde Z_{\mathcal C}(M_1),
\]
and compatibility with disjoint union.  Hence $\widetilde Z_{\mathcal C}$ defines a symmetric monoidal functor $\Cob^{\mathrm{ext}}_{2+1}\to\mathrm{Vect}_{\mathbb C}$, proving (iii).  This completes the proof.
\end{proof}

\subsection{Abelian RT Boundary State Spaces}

\begin{theorem}
\label{thm:Abelian-pointed-extended-TQFT}
Let $G$ be a finite Abelian group and let $q:G\to U(1)$ be a quadratic function, such that $q(-x)=q(x)$, whose associated bicharacter
\[ \Omega(x,y)=\frac{q(x+y)}{q(x)q(y)} \]
is nondegenerate. Let $\mathcal C(G,q)$ be the pointed ribbon category whose simple objects are $\{R_x\}_{x\in G}$, with tensor product $R_x\otimes R_y\simeq R_{x+y}$, braiding $c_{x,y}=\Omega(x,y)\mathrm{id}$ and twist $\theta_x=q(x)\mathrm{id}$. Then:

\begin{enumerate}
\item[\textup{(i)}]
$\mathcal C(G,q)$ is a modular tensor category.

\item[\textup{(ii)}]
Hence Turaev's construction \cite{Turaev1994}, see Theorem~\ref{thm:RT-boundary-TQFT}, defines an extended $(2+1)$--dimensional TQFT
\[
\widetilde Z_{G,q}:\Cob^{\mathrm{ext}}_{2+1}\to\mathrm{Vect}_{\mathbb C},
\]
which assigns state spaces $\mathcal V_{G,q}(\Sigma,\lambda)$ to extended surfaces and linear maps $\widetilde Z_{G,q}(M)$ to extended bordisms.

\item[\textup{(iii)}]
For a bordism
$M:(\Sigma_{\mathrm{in}},\lambda_{\mathrm{in}})
\to(\Sigma_{\mathrm{out}},\lambda_{\mathrm{out}})$
the operator $\widetilde Z_{G,q}(M)$ is determined by the pairing rule
\[
\langle w,\widetilde Z_{G,q}(M)(v)\rangle
=
\widetilde Z_{G,q}(M_{v,w}),
\]
where $M_{v,w}$ is the closed $3$--manifold obtained by gluing handlebodies representing $v$ and $w$.  The scalar $\widetilde Z_{G,q}(M_{v,w})$ is computed by the Abelian surgery formula associated with the finite quadratic module $(G,q)$.
\end{enumerate}

In particular, for $G=\mathbb Z_k$ and $q(x)=\exp(\pi i x^2/k)$ with $k\in 2\mathbb Z_{>0}$, the resulting closed invariant coincides with the cyclic pointed RT invariant used in Section~\ref{sec:abelian-surgery} and hence with the Mattes--Polyak--Reshetikhin Gauss sum after quadratic reciprocity \cite{Mattes,Murakami,deloup2005}.
\end{theorem}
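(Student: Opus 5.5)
The plan is to prove (i) directly from the pointed structure, then obtain (ii) and (iii) from Theorem~\ref{thm:RT-boundary-TQFT}, and finish with the cyclic specialization by comparing surgery formulas.

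\emph{Part (i).} First I check that the data define a ribbon category. Take $\mathcal C(G,q)$ to be the category of $G$-graded finite-dimensional vector spaces with trivial associator; this is legitimate because $q$ is a genuine quadratic function. The braiding $c_{x,y}=\Omega(x,y)\,\mathrm{id}$ satisfies the hexagon axioms precisely because $\Omega$ is a bicharacter. For the twist, the balancing identity $\theta_{x+y}=c_{y,x}c_{x,y}(\theta_x\otimes\theta_y)$ reads $q(x+y)=\Omega(x,y)\Omega(y,x)q(x)q(y)$. This is where I must fix a convention: either the braiding is taken as a square root of $\Omega$ (a bicharacter $\beta$ with $\beta(x,x)=q(x)$), or $\Omega$ as written is the monodromy. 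Since $q(-x)=q(x)$ and $G$ is abelian, a quadratic form always admits such a bicharacter refinement when pointed braided categories are classified via Joyal--Street, and the braided equivalence class depends only on $(G,q)$. I would invoke this classification rather than construct cocycles by hand. The condition $\theta_{x^*}=\theta_x^*$ is exactly $q(-x)=q(x)$.

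Next, every simple object is invertible, so it has quantum dimension $\pm1$. With the pivotal structure chosen so that all dimensions equal $1$, the $S$-matrix is $S_{x,y}=\Omega(x,y)^{-1}$, up to the chosen convention, which has entries of modulus one. Its nondegeneracy is equivalent to nondegeneracy of the character map $G\to\widehat G$, $x\mapsto\Omega(x,\cdot)$. That map is injective by hypothesis, hence bijective since $G$ is finite. Orthogonality of characters then gives $SS^\dagger=|G|\,\mathrm{Id}$. Semisimplicity, finiteness and simplicity of the unit are immediate, so $\mathcal C(G,q)$ is modular.

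\emph{Part (ii)} follows by applying Theorem~\ref{thm:RT-boundary-TQFT} to $\mathcal C(G,q)$.

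\emph{Part (iii).} The pairing rule is the defining formula~\eqref{eq:Turaev-pairing-def-fixed}. It applies to the corrected operator $\widetilde Z$ once the weights of the glued handlebodies are accounted for, using the additivity of weights under the Maslov correction. It then remains to compute closed invariants of colored ribbon graphs in a pointed category. Every morphism between simple objects is a scalar, and fusion is governed by the group law, so any colored ribbon graph reduces by skein relations to a colored framed link. Its evaluation is $\prod_i q(x_i)^{\mathrm{fr}}\prod_{i<j}\Omega(x_i,x_j)^{\mathrm{lk}}$, generalizing \eqref{eq:RT-link-evaluation}. Turaev's surgery formula takes the Kirby color $\sum_x \dim(x)\,x=\sum_{x\in G}x$ with normalization given by the Gauss sums $\sum_x q(x)^{\pm1}$. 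Substituting these gives exactly the Abelian surgery formula \eqref{eq:Zk-surgery-invariant} with $G$ in place of $\mathbb Z_k$.

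\emph{Cyclic case.} For $G=\mathbb Z_k$ and $q=q_k$, the formula above reduces literally to \eqref{eq:Zk-final}, using $A_\pm(k)$ from \eqref{eq:apm} and the normalization \eqref{eq:kirby-normalization}. The identification with the MPR sum then follows from the reciprocity formulas \eqref{eq:reciprocity-nondeg}--\eqref{eq:deloup-turaev}.

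The main obstacle is normalization bookkeeping. Turaev normalizes with $\mathcal D=\sqrt{|G|}$ and $\Delta=\sum_x q(x)$, and these factors must be matched with $\mathcal N_k$, including the overall $k^{-1/2}$ and the signature exponents. The anomaly $\kappa$ equals $\Delta/\mathcal D=e^{\pi i/4}$ in the cyclic case, which I would read off from \eqref{eq:gauss-evaluation}. I would verify the matching first on $S^3$, with empty surgery link, and on $S^2\times S^1$, with the $0$-framed unknot, and then argue by multiplicativity under Kirby moves.
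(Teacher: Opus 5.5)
Your overall route matches the paper's: check modularity through nondegeneracy of the monodromy pairing, apply Theorem~\ref{thm:RT-boundary-TQFT}, and compute the closed pairings from the pointed link factorization $\prod_i q(x_i)^{\mathrm{fr}}\prod_{i<j}\Omega(x_i,x_j)^{\mathrm{lk}}$ together with the Kirby normalization. You are also right that the literal data $c_{x,y}=\Omega(x,y)$, $\theta_x=q(x)$ violate balancing. The double braiding is $\Omega(x,y)^2$, whereas $q(x+y)=\Omega(x,y)q(x)q(y)$ requires it to be $\Omega(x,y)$; the paper's proof passes over this point.

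However, your repair in part (i) is false, and it fails for exactly the case the theorem is about. You cannot in general realize $\mathcal C(G,q)$ on $G$-graded vector spaces with trivial associator. With trivial associator the hexagon axioms force the braiding to be a bicharacter $\beta$, and its quadratic form is then $\beta(x,x)$. Such a $\beta$ need not exist. For $(\mathbb Z_k,q_k)$ you would need $\beta(1,1)=e^{\pi i/k}$, and then $1=\beta(k\cdot 1,1)=\beta(1,1)^k=e^{\pi i}=-1$.

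Joyal--Street does not say that quadratic forms lift to bicharacters. It says $H^3_{ab}(G,\mathbb C^\times)\cong\mathrm{Quad}(G)$, so the realizing category generally carries a nontrivial associator. For $\mathbb Z_k$ with $k$ even, one model uses representatives $0\le a,b,c<k$, with braiding $e^{\pi i ab/k}$ and associator $\omega(a,b,c)=(-1)^a$ if $b+c\ge k$ and $1$ otherwise.

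The fix is as follows. Take the Joyal--Street category with this abelian $3$-cocycle and its pseudo-unitary spherical structure, so that all $d_x=1$. Then $\theta_x=c_{x,x}=q(x)$ and the monodromy is $\Omega$. The $S$-matrix and all colored framed-link evaluations depend only on $q$ and $\Omega$. So the rest of your argument goes through unchanged: nondegeneracy via $x\mapsto\Omega(x,\cdot)$, the link factorization, and the Kirby color $\sum_{x\in G}x$.

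A smaller point: in Turaev's normalization $\Delta=\sum_i v_i^{-1}(\dim i)^2=\sum_x q(x)^{-1}=A_-(k)$. Hence $\Delta/\mathcal D=e^{-\pi i/4}$, which agrees with the paper's $\kappa$. Your value $e^{\pi i/4}$ comes from using $\sum_x q(x)$ instead.
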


\begin{proof}
The category $\mathcal C(G,q)$ is $\mathbb C$--linear and semisimple with simple objects indexed by $G$ and one--dimensional morphism spaces. Duals are given by $R_x^*\cong R_{-x}$.  The braiding and twist satisfy the ribbon identities because $q$ is a quadratic refinement and $\Omega$ is its associated bicharacter. The Hopf--link evaluation gives the bicharacter pairing $S^{\mathrm{Hopf}}_{x,y}=\Omega(x,y),$ with the present normalization convention for the pointed category. Nondegeneracy of $\Omega$ implies this pairing is invertible, so $\mathcal C(G,q)$ is modular. This proves (i).

Since $\mathcal C(G,q)$ is modular, Theorem~\ref{thm:RT-boundary-TQFT} applies. Hence, Turaev's construction defines state spaces $\mathcal V_{G,q}(\Sigma,\lambda)$ and bordism operators $\widetilde Z_{G,q}(M)$ satisfying the gluing axioms of a $(2+1)$--dimensional TQFT. This proves (ii).

The operator $\widetilde Z_{G,q}(M)$ is defined through the pairing rule $\langle w,\widetilde Z_{G,q}(M)(v)\rangle = \widetilde Z_{G,q}(M_{v,w}).$ The closed invariant on the right is computed by the Reshetikhin--Turaev surgery construction.  For a surgery presentation $M_{v,w}=M_L$, the evaluation of the colored surgery link factorizes in the pointed case as
\[
\langle L_g\rangle
=
\prod_i q(g_i)^{\mathcal L_{ii}}
\prod_{i<j}\Omega(g_i,g_j)^{\mathcal L_{ij}},
\]
which is exactly the Abelian evaluation formula associated with the finite quadratic module $(G,q)$. Summing over all colorings with the Kirby normalization yields the Abelian Gauss sum surgery expression.  For $G=\mathbb Z_k$ and $q(x)=\exp\!\left(\frac{\pi i}{k}x^2\right)$ this becomes the quadratic Gauss sum formula used in Section~\ref{sec:abelian-surgery} and~\ref{Closed Comparison-Between-rank–one}, in agreement with Mattes--Polyak--Reshetikhin \cite{Mattes} and Murakami--Ohtsuki--Okada  \cite{Murakami}. This proves (iii). Thus in the Abelian case, the entire boundary TQFT is controlled by quadratic Gauss sums arising from the evaluation of the closed manifolds $M_{v,w}$ obtained by gluing boundary states.
\end{proof}

We now study the precise statement that the Reshetikhin--Turaev construction, when applied to the pointed (Abelian) modular category data used throughout this work, extends to a $(2+1)$--dimensional TQFT on bordisms with boundary.  The key point is that the operator associated to a bordism is defined by a pairing against boundary states, and strict functoriality is recovered by the standard Maslov correction. We now give a concrete description of the boundary state spaces and gluing maps in the Abelian Reshetikhin--Turaev theory, compatible with the surgery expressions and quadratic Gauss sums used above. Throughout this subsection we only consider the pointed modular category associated to a finite Abelian group $G$ equipped with a nondegenerate quadratic form $q:G \rightarrow \mathbb C^*,$ such that $q(-x)=q(x)$ and
\[
\Omega(x,y)=\frac{q(x+y)}{q(x)q(y)}.
\]
is a bicharacter. This includes the case $G=\mathbb Z_k$ relevant for comparison with the Mattes--Polyak--Reshetikhin invariant \cite{Mattes}. Let $\Sigma=T^2=S^1\times S^1$. Fix the standard basis $\alpha,\beta \in H_1(T^2;\mathbb Z)$, and $\langle \alpha,\beta\rangle = 1.$ Choose the Lagrangian $\lambda=\langle \alpha\rangle \subset H_1(T^2;\mathbb R).$

\begin{prop}
In the Abelian RT theory associated to $(G,q)$, we have
$\mathcal V(T^2,\lambda)\cong \mathbb C[G].$
\end{prop}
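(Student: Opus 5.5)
The plan is to compute $\mathcal V(T^2,\lambda)$ in the handlebody model recalled in the proof of Theorem~\ref{thm:RT-boundary-TQFT}. Take the solid torus $H=D^2\times S^1$, glued so that the meridian $\partial D^2\times\{pt\}$ represents $\alpha$. Then $\ker\bigl(H_1(T^2;\mathbb R)\to H_1(H;\mathbb R)\bigr)=\langle\alpha\rangle=\lambda$, so $H$ is an admissible handlebody for $(T^2,\lambda)$. By Theorem~\ref{thm:RT-boundary-TQFT}(i), $\mathcal V(T^2,\lambda)$ is canonically the skein space of $\mathcal C(G,q)$-colored ribbon graphs in $H$. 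By Theorem~\ref{thm:Abelian-pointed-extended-TQFT}(i), $\mathcal C(G,q)$ is modular with simple objects $\{R_x\}_{x\in G}$.

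\textbf{Spanning.} Every colored ribbon graph in $H$ can be isotoped into a neighbourhood of the core circle $\{0\}\times S^1$, so it becomes the closure in the solid torus of a $\mathcal C$-colored tangle in $D^2\times I$. Using fusion (semisimplicity together with $R_x\otimes R_y\simeq R_{x+y}$ and one-dimensional Hom spaces), any such tangle can be reduced to a scalar multiple of a single strand colored by some $R_x$. Hence the core circles $v_x:=[\text{core colored } R_x]$, $x\in G$, span the space. Braiding and twist contribute only scalars $\Omega(x,y)^{\pm1}$ and $q(x)^{\pm1}$, so no further generators arise.

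\textbf{Linear independence.} This is the main obstacle: the skein quotient has to be shown not to collapse further. I would pair $v_x$ against states on the complementary solid torus $H'$, whose meridian represents $\beta$, with $H\cup_{T^2}H'\cong S^3$ so that the two cores form a Hopf link. By the pairing rule in Theorem~\ref{thm:Abelian-pointed-extended-TQFT}(iii),
\[
\langle v'_y,v_x\rangle=Z(S^3;\text{Hopf link colored }x,y)=\mathcal D^{-1}\,\Omega(x,y)^{\pm1},
\]
with a fixed nonzero normalization $\mathcal D$. Because $\Omega$ is nondegenerate, the matrix $(\Omega(x,y))_{x,y\in G}$ is the character table of $G$ under the identification $G\cong\widehat G$, $y\mapsto\Omega(\cdot,y)$, so it is invertible. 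Therefore the $v_x$ are linearly independent, and $x\mapsto v_x$ extends to an isomorphism $\mathbb C[G]\cong\mathcal V(T^2,\lambda)$.

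\textbf{Well-definedness.} Finally, I would check that the pairing used above is well defined on the skein quotient. Turaev's construction guarantees this: skein relations do not change closed invariants. As a consistency check, the resulting dimension $|G|$ agrees with the Verlinde count $\sum_{x}1$ for a pointed category, and for $G=\mathbb Z_k$ it agrees with $\dim\mathcal H(\Sigma,L)=k^g$ at $g=1$.
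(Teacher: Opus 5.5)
Your proposal is correct. The spanning step is the same as the paper's: push every ribbon graph onto the core of the solid torus, then use $R_x\otimes R_y\simeq R_{x+y}$ and one-dimensional Hom spaces to fuse everything to a single core strand colored by some $x\in G$.

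Where you diverge is linear independence. The paper handles it with one sentence: there are no further local relations because all morphism spaces are one-dimensional. That observation does not by itself rule out skein relations identifying distinct core colorings or killing some of them, so it is an assertion rather than an argument. You instead prove independence by pairing with colored cores of the complementary solid torus. This reduces the question to invertibility of $(\Omega(x,y))_{x,y\in G}$, which is a permuted character table because nondegeneracy makes $y\mapsto\Omega(\cdot,y)$ an isomorphism $G\to\widehat G$. Your argument uses the modularity hypothesis essentially and front-loads the content of Proposition~\ref{prop:torus-pairing}. It is longer, but it actually establishes $\dim\mathcal V(T^2,\lambda)=|G|$, which the paper's version only asserts.

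There are two small bookkeeping points.
\begin{enumerate}
\item The complementary torus $H'$ has $\ker\bigl(H_1(T^2;\mathbb R)\to H_1(H';\mathbb R)\bigr)=\langle\beta\rangle$, not $\lambda$. So strictly $v'_y$ lives in a state space with a different Lagrangian. Changing the Lagrangian only multiplies the gluing by a nonzero anomaly factor independent of $x,y$, so your constant $\mathcal D$ absorbs it. A cleaner option that avoids this bookkeeping is to define the functionals directly: embed $H$ in $S^3$ as an unknotted solid torus, add a meridian colored $y$, and take the RT evaluation. This is manifestly invariant under local relations.
\item Any quantum dimensions $\pm1$ in the Hopf-link value only multiply the matrix by invertible diagonal matrices, so your conclusion is unaffected.
\end{enumerate}
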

\begin{proof}
Let $H$ be the solid torus with boundary $T^2$ such that $\ker(H_1(T^2,\mathbb R)\to H_1(H,\mathbb R))=\lambda$. Thus the curve $\alpha$ bounds a disk in $H$. Ribbon graphs in $H$ are unions of parallel copies of the core circle $\gamma$ of the solid torus. Colorings assign to $\gamma$ an element $x\in G$. Fusion rules in the pointed category imply that parallel strands labeled by $x$ and $y$ fuse to a single strand labeled by $x+y$. Hence any ribbon graph reduces to a single core strand labeled by an element $x\in G$. There are no further local relations since all morphism spaces are one-dimensional. Thus the skein space is freely generated by the label $x\in G$, proving
\[
\mathcal V(T^2,\lambda)\cong \mathrm{Span}\{e_x\,|\,x\in G\}\cong \mathbb C[G].
\]
\end{proof}
Hence the dimension is $\dim \mathcal V(T^2,\lambda)=|G|.$ For $G=\mathbb Z_k$ this equals $k$. The outgoing handlebody states determine a canonical covector basis, which is related to the algebraic dual basis by the Hopf--link pairing.

\begin{prop}\label{prop:torus-pairing}
Let $\mathcal C(G,q)$ be the pointed modular category determined by a finite Abelian group $G$ with quadratic
function $q:G\to U(1)$ and associated bicharacter
\[
\Omega(x,y)=\frac{q(x+y)}{q(x)q(y)}.
\]
Let $\mathcal V(T^2,\lambda)$ be the RT state space of the torus with Lagrangian $\lambda=\langle\alpha\rangle\subset H_1(T^2;\mathbb R)$, and let $\{e_x\}_{x\in G}$ denote the basis corresponding to the core colorings of the solid torus.  Let $\{\varepsilon_x\}_{x\in G}$ be the algebraic dual basis of $\mathcal V(T^2,\lambda)^*$, and let $\{e_x^{\vee}\}_{x\in G}\subset \mathcal V(T^2,\lambda)^*$ denote the covectors represented by the oppositely oriented solid torus with core colored by $x$. Then the canonical pairing defined by gluing solid tori satisfies
\[
\langle e_x^{\vee},e_y\rangle
=
Z_{\mathcal C}(S^3,\text{Hopf link colored by }x,y)
=
\Omega(x,y).
\]
Equivalently,
\[
e_x^{\vee}=\sum_{y\in G}\Omega(x,y)\,\varepsilon_y.
\]
\end{prop}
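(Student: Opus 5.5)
The plan is to compute the gluing pairing geometrically and then evaluate the resulting closed invariant with the pointed graphical calculus.

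First, realize the state $e_y$ by the solid torus $H$ whose meridian is $\alpha$, with core $\gamma$ colored by $y$. Realize the covector $e_x^{\vee}$ by the oppositely oriented solid torus $H'$, with core colored by $x$. In Turaev's pairing construction (Theorem~\ref{thm:RT-boundary-TQFT}), $H'$ is glued to $H$ along $T^2$ by the standard orientation-reversing identification. This identification sends the meridian of $H'$ to the longitude $\beta$ of $H$, so that the two Lagrangians $\ker(H_1(T^2)\to H_1(H))=\langle\alpha\rangle$ and $\ker(H_1(T^2)\to H_1(H'))=\langle\beta\rangle$ are transverse. With this gluing the union $H\cup_{T^2}H'$ is the genus-one Heegaard splitting of $S^3$. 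Its two cores, colored $y$ and $x$, form a Hopf link with zero framings, since each core is pushed off along the boundary torus. Because the gluing is transverse with a single Lagrangian on each side, the Maslov correction in Theorem~\ref{thm:RT-boundary-TQFT} has vanishing index. Hence
\[
\langle e_x^{\vee},e_y\rangle=Z_{\mathcal C}(S^3,\text{Hopf link colored by }x,y).
\]

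Next, evaluate this closed invariant. The manifold $S^3$ needs no surgery, so $m=0$ and no Gauss-sum normalization enters beyond the convention $Z_{\mathcal C}(S^3)=1$ implicit in \eqref{eq:Zk-closed}. The same convention is used in the proof of Theorem~\ref{thm:Abelian-pointed-extended-TQFT}(i), where $S^{\mathrm{Hopf}}_{x,y}=\Omega(x,y)$. The link evaluation in the pointed category is given by the formula $\prod_i q(g_i)^{\mathcal L_{ii}}\prod_{i<j}\Omega(g_i,g_j)^{\mathcal L_{ij}}$ from that proof, equivalently \eqref{eq:general-link-eval}. The Hopf link has linking matrix with zero diagonal and off-diagonal entry $1$. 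Its two crossings each contribute one braiding, and the double braiding $c_{y,x}c_{x,y}$ contributes the monodromy $\Omega(x,y)$. Thus the evaluation is $\Omega(x,y)$. Quantum dimensions are all $1$, because simple objects are invertible and $\theta$ is a quadratic refinement, so no further factors appear. In the cyclic case this matches \eqref{eq:RT-surgery-block} with $\mathcal L_{L'}=\bigl(\begin{smallmatrix}0&1\\1&0\end{smallmatrix}\bigr)$.

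Finally, the dual-basis statement follows by linearity. Writing $e_x^{\vee}=\sum_y c_{xy}\varepsilon_y$ and pairing with $e_y$ gives $c_{xy}=\langle e_x^{\vee},e_y\rangle=\Omega(x,y)$. As a consistency check, nondegeneracy of $\Omega$ makes $(\Omega(x,y))_{x,y}$ invertible, so the $e_x^{\vee}$ form a basis.

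I expect the main obstacle to be the first step. One has to pin down the orientation and framing conventions of the gluing map so that the result is exactly the $0$-framed Hopf link rather than a framed variant. One also has to check that the Maslov/weight correction is trivial; otherwise factors $q(x)^{\pm1}$, or powers of $\kappa$, would appear. My approach is to fix the gluing as the standard $S$-type identification $\alpha\leftrightarrow\beta$. I would then compute the Maslov index $\mu(\langle\alpha\rangle,\cdot,\langle\beta\rangle)$ directly, expecting $0$ for two transverse Lagrangians in the two-dimensional space $H_1(T^2;\mathbb R)$ when the middle term is degenerate.
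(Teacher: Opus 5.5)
Your proposal is correct and follows the same route as the paper's proof. You glue the two solid tori by the meridian--longitude exchange to get $S^3$ with a Hopf link colored $x,y$, evaluate it to $\Omega(x,y)$ in the pointed category, and read off $e_x^{\vee}=\sum_y\Omega(x,y)\,\varepsilon_y$ by pairing with the $e_y$. Your framing and Maslov checks make explicit what the paper leaves implicit; note that the index vanishes because $\lambda=\langle\alpha\rangle$ coincides with the Lagrangian of the solid torus carrying $e_y$, not because of transversality.

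Two of your citations need adjusting, though: \eqref{eq:Zk-closed} at $m=0$ gives $k^{-1/2}$ rather than $1$, and \eqref{eq:general-link-eval} carries $\Omega^{2\operatorname{lk}}$. The exact value $\Omega(x,y)$ is better justified by the unnormalized evaluation \eqref{eq:RT-surgery-block}, which is the convention the paper tacitly uses, together with the balancing identity $c_{y,x}c_{x,y}=\theta_{x+y}\theta_x^{-1}\theta_y^{-1}=\Omega(x,y)$.
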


\begin{proof}
By Turaev's construction of the RT TQFT with boundary \cite[Sec.~IV.1--IV.2]{Turaev1994}, the pairing on the
torus state space is defined by gluing two solid tori along their boundary with the diffeomorphism exchanging
meridian and longitude.  The resulting closed $3$--manifold is $S^3$.

If the core circles of the two solid tori are colored by $x$ and $y$, then after gluing they form a Hopf link in $S^3$.  By definition of the pairing, $\langle e_x^{\vee},e_y\rangle = Z_{\mathcal C}(S^3,\text{Hopf link colored by }x,y).$ In the Reshetikhin--Turaev theory the Hopf link with components colored by simple objects $x,y$ evaluates to the bicharacter $\Omega(x,y)$ in the pointed case. Therefore $\langle e_x^{\vee},e_y\rangle=\Omega(x,y),$ which proves the first statement. The second follows by expressing $e_x^{\vee}$ in the algebraic dual basis $\{\varepsilon_y\}_{y\in G}$.
\end{proof}

Thus the pairing on $\mathcal V(T^2,\lambda)\cong\mathbb C[G]$ is precisely the Fourier pairing determined by the bicharacter $\Omega$.  In particular, the normalized modular $S$--operator on the torus state space is the discrete Fourier transform
\[
S(e_x)=|G|^{-1/2}\sum_{y\in G}\Omega(x,y)\,e_y,
\]
up to the conventional sign choice for $\Omega$.

Let $M$ be obtained by surgery on an $m$--component link $L\subset S^3$ with linking matrix $\mathcal L$. Each boundary torus contributes a state space $\mathcal V(T^2,\lambda)\cong \mathbb C[G]$. The surgery construction corresponds to contracting the tensor product space $\mathbb C[G]^{\otimes m}$, using the pairing induced by the Hopf--link matrix and the framing twists. Explicitly,
\[
Z(M_L)
=
\alpha_+^{\frac{-m-\sigma}{2}}
\alpha_-^{\frac{-m+\sigma}{2}}
\frac{1}{|G|^m}
\sum_{g\in G^m}
\prod_{i} q(g_i)^{\mathcal L_{ii}}
\prod_{i<j}\Omega(g_i,g_j)^{\mathcal L_{ij}},
\]
where $\alpha_\pm=\frac{1}{|G|}\sum_{x\in G}q(x)^{\pm1}.$ In the case $G=\mathbb Z_k$ and
$q(x)=\exp\!\left(\frac{\pi i}{k}x^2\right)$, this reduces to
\[
Z_{\mathbb Z_k}(M_L)
=
\alpha_+^{\frac{-m-\sigma}{2}}
\alpha_-^{\frac{-m+\sigma}{2}}
\frac{1}{k^m}
\sum_{g\in(\mathbb Z_k)^m}
\exp\!\left(
\frac{\pi i}{k}\,g^\top\mathcal L g
\right),
\]
which is the closed RT surgery scalar as in Section~~\ref{sec:abelian-surgery}.  We now clarify precisely how the Mattes--Polyak--Reshetikhin Gauss sum arises in the boundary theory. Let $M$ be a compact oriented $3$--manifold with boundary $\partial M = \Sigma$. In the extended RT theory one does not assign a scalar to $M$, but a vector $Z(M)\in \mathcal V(\Sigma,\lambda)$. Choose a handlebody $H_x$ representing a basis vector $e_x\in \mathcal V(\Sigma,\lambda)$. By definition of the Turaev pairing construction, the coefficient of $e_x$ in $Z(M)$ is obtained by gluing:
\begin{equation}
Z(M)_x
:=
Z(M \cup_{\Sigma} H_x),
\end{equation}
where the right-hand side is the RT invariant of the resulting closed $3$--manifold.  The MPR formula applies
to the closed manifold $M_x := M \cup_{\Sigma} H_x,$ not to $M$ itself.

\begin{theorem}
Let $M$ be presented by surgery on a framed link $L\subset S^3$
together with boundary components.
Let $M_x$ denote the closed manifold obtained by gluing a solid torus
with core colored by $x\in G$.
Then for $G=\mathbb Z_k$ with $k\in 2\mathbb Z_{>0}$,
\[
Z(M)_x
=
Z^{RT,\mathrm{raw}}_{\mathbb Z_k}(M_x)
=
A_+^{\frac{-m-\sigma}{2}}
A_-^{\frac{-m+\sigma}{2}}
\frac{1}{k^m}
\sum_{g\in(\mathbb Z_k)^m}
\exp\!\left(
\frac{\pi i}{k}\,
Q_{L,x}(g)
\right),
\]
where $Q_{L,x}(g)$ is the quadratic form obtained by adjoining
the boundary filling to the linking matrix.
\end{theorem}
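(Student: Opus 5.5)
The argument is definitional at its core: combine Turaev's pairing rule from Theorem~\ref{thm:Abelian-pointed-extended-TQFT}(iii) with the closed surgery formula \eqref{eq:Zk-final}, using the explicit model of $\mathcal V(T^2,\lambda)$. We assume $\partial M=T^2$; several boundary tori are handled componentwise in the same way.

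\textbf{Step 1: identify the coefficient.} By the Turaev pairing construction, the coefficient of $e_x$ in $Z(M)$ is the closed invariant $Z(M\cup_\Sigma H_x)$. Here $H_x$ is the solid torus whose meridian spans $\lambda=\langle\alpha\rangle$ and whose core is colored by $x$. This gives the first equality $Z(M)_x=Z^{RT,\mathrm{raw}}_{\mathbb Z_k}(M_x)$. If the coefficient is instead read against the dual basis $\{\varepsilon_x\}$ rather than the handlebody covectors $e_x^\vee$, Proposition~\ref{prop:torus-pairing} relates the two by the invertible Fourier matrix $\Omega$. We fix the convention that $Z(M)_x$ means the handlebody matrix element, which is the definition displayed just before the statement.

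\textbf{Step 2: produce a surgery presentation of $M_x$.} Present $M$ as the complement in $S^3_L$ of a tubular neighbourhood of a framed knot $K$, where the framing of $K$ is chosen so that the boundary Lagrangian matches $\lambda$. Regluing $H_x$ then returns $S^3_L$ with the core of $H_x$ realized as $K$, carrying color $x$. Thus $M_x=M_L$ with the colored link $L'=K_x$ inside it. The block linking matrix of $L\sqcup K$ has the entries $\mathcal L_L$, $\mathcal L_{LK}$ and $\mathrm{fr}(K)$.

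\textbf{Step 3: evaluate.} Apply \eqref{eq:RT-surgery-block} and \eqref{eq:Zk-surgery-invariant} with $h=x$. The surgery link is still $L$, so $m$ and $\sigma=\sigma(\mathcal L_L)$ are unchanged and the Kirby normalization is exactly \eqref{eq:kirby-normalization}. The sum is
\[
\frac1{k^m}\sum_{g}\exp\!\Big(\frac{\pi i}{k}Q_{L,x}(g)\Big),\qquad
Q_{L,x}(g)=\langle g,\mathcal L_Lg\rangle+2x\langle g,\mathcal L_{LK}\rangle+\mathrm{fr}(K)x^2 .
\]
This $Q_{L,x}$ is precisely the quadratic form obtained by adjoining the boundary filling to the linking matrix. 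Since every step is invariant under Kirby moves on $L$ and isotopy of $K$ in the complement, the resulting expression depends only on $(M,\lambda,x)$.

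\textbf{Main obstacle.} The hardest step is Step 2. One must check that the framing of $K$ determined by $\lambda$ makes the colored filling $H_x$ correspond exactly to the colored knot $K_x$ in $M_L$, with no extra twist factor $q_k(x)^{n}$ and no framing shift that would change $\sigma$. One must also reconcile the normalization prefactor: the $1/k^m$ in the statement versus the $k^{-1/2}$ and $A_\pm$ conventions in \eqref{eq:kirby-normalization}. I would handle this by fixing the convention of \eqref{eq:Zk-surgery-invariant} and treating $Q_{L,x}$ as defined with the framing induced by $\lambda$. Any residual Maslov phase is absorbed into the uncorrected raw invariant $Z^{RT,\mathrm{raw}}$, and would only be fixed later by the extended correction.
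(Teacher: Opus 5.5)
Your argument is the paper's argument. The paper also takes $Z(M)_x$ to be the invariant of the filled manifold, treats gluing $H_x$ as a Dehn filling that adjoins one component to the surgery presentation $L$, and evaluates the result by the cyclic Gauss-sum formula of Section~\ref{sec:abelian-surgery}. Your Step~2 is sharper than the paper's sketch. It makes explicit that the adjoined component is the $x$-colored core $K$, a Wilson line and not a new surgery component. Hence $m$ and $\sigma=\sigma(\mathcal L_L)$ do not change, and $Q_{L,x}$ is the form you wrote down.

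Your ``main obstacle'' paragraph needs two corrections.

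First, $\lambda=\langle\alpha\rangle$ fixes the meridian of $K$ (the filling slope), not its framing. The framing is the one the core of $H_x$ inherits from the chosen longitude of the boundary torus, and replacing $\beta$ by $\beta+n\alpha$ multiplies $Z(M)_x$ by $q_k(x)^{\pm n}$. So there is nothing to verify here. You simply record this inherited framing as $\operatorname{fr}(K)$ in $Q_{L,x}$, and since $K$ is not a surgery component it cannot shift $\sigma$.

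Second, your proposed fix for the prefactor does not work. Combining \eqref{eq:Zk-surgery-invariant} literally with \eqref{eq:kirby-normalization} gives $k^{-1/2}k^{-m}A_+(k)^{\frac{-m-\sigma}{2}}A_-(k)^{\frac{-m+\sigma}{2}}\sum_g(\cdots)$. This differs from the stated formula by $k^{-1/2}$, and it is multiplied by $k^{-1}$ under a $\pm1$ stabilization. So your claim that every step is Kirby invariant is false for that choice. The printed statement has the same defect if its $A_\pm$ are the unnormalized sums \eqref{eq:apm}.

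The consistent normalization is \eqref{eq:Zk-final} with $L'=K$ and $h=x$. It is invariant under $K_1$ because the extra factor $A_\pm(k)$ produced by the sum cancels against the prefactor. It coincides with the stated right-hand side once the $A_\pm$ there are read as the normalized sums $A_\pm(k)/k$, as in the $\alpha_\pm$-display just before the statement. The paper's own proof passes over this point by citing Section~\ref{sec:abelian-surgery}.

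A minor point in Step~1: the $\alpha$-filling you use is not the covector $e_x^\vee$ of Proposition~\ref{prop:torus-pairing}. That covector is the meridian--longitude swap, which closes up to $S^3$ with a Hopf link. The $\alpha$-filling closes up to $S^2\times S^1$ and pairs diagonally with the $e_y$, up to a constant and $x\mapsto -x$. This does not affect the proof, since $Z(M)_x$ is defined by the filling.
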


\begin{proof}
Present $M$ by surgery on $L\subset S^3$.
Gluing the solid torus $H_x$ corresponds to performing Dehn filling
along the boundary torus with meridian colored by $x$.
The resulting manifold $M_x$ is closed and admits a surgery presentation
obtained from $L$ by adjoining one additional component corresponding
to the boundary filling. The RT invariant of $M_x$ is therefore computed by the standard
surgery formula.
In the pointed Abelian category $G=\mathbb Z_k$ with
$q(y)=\exp\!\left(\frac{\pi i}{k}y^2\right)$, the RT evaluation reduces to a quadratic Gauss sum. By the
cyclic finite-group formula of Section~\ref{sec:abelian-surgery}, this equals
\[
A_+^{\frac{-m-\sigma}{2}}
A_-^{\frac{-m+\sigma}{2}}
\frac{1}{k^m}
\sum_{g\in(\mathbb Z_k)^m}
\exp\!\left(
\frac{\pi i}{k}\,
Q_{L,x}(g)
\right).
\]
Thus $Z(M)_x=Z^{RT,\mathrm{raw}}_{\mathbb Z_k}(M_x)$.
\end{proof}

Since $\mathcal V(\Sigma,\lambda)\cong \mathbb C[G]$ for a torus,
we obtain
\[
Z(M)
=
\sum_{x\in G}
Z^{RT,\mathrm{raw}}_{\mathbb Z_k}(M_x)\, e_x.
\]
Thus the boundary state is completely determined by the family of closed-manifold evaluations $\{Z_{\mathbb Z_k}(M_x)\}_{x\in G}$. The MPR Gauss sum does not directly describe the invariant of a manifold with boundary. Rather, the boundary TQFT assigns a vector or linear map, its matrix elements are defined by closing the manifold, and each such closed evaluation is computed by the Abelian surgery formula. Therefore the boundary theory is consistent with MPR in the sense that all of its closed pairings reproduce the Abelian surgery invariant.

Let $\mathcal L$ be the linking matrix of the surgery presentation of $M_x$. As in the closed case, the torsion Gauss sum obtained from the above formula equals
\[
\sum_{[y]\in\mathbb Z^m/\mathcal L\mathbb Z^m}
\exp\!\left(
2\pi i k\, q_{\mathcal L}([y])
\right), \qquad
\text{where},  \qquad
q_{\mathcal L}([y])
=
\tfrac12 y^\top \mathcal L^{-1} y \pmod 1.
\]
Hence the boundary coefficients are governed by the same quadratic refinement of the torsion linking form that appears in the closed theory. In particular, for $G=\mathbb Z_k$, the Abelian Reshetikhin--Turaev theory with boundary reconstructs the $U(1)$ Chern--Simons theory at level $k$, and its closed evaluations coincide exactly with the pointed cyclic RT invariant after the reciprocity comparison of Section~\ref{subsec:reciprocity}. The torsion linking form $\lambda([x],[y])$ induces the quadratic refinement $q_{\mathcal L}([x])$.

In the Abelian RT theory, the matrix elements of a bordism operator are computed by closing with handlebodies and evaluating the resulting closed surgery presentation. After separating the null directions and applying quadratic reciprocity to the nondegenerate block, one recovers the same finite torsion Gauss sum
\[
\sum_{[x]\in \mathbb Z^\rho/\mathcal L_{\mathrm{reg}}\mathbb Z^\rho}
\exp\!\left(2\pi i k\,q_{\mathcal L_\mathrm{reg}}([x])\right)
\]
that appears in the closed equivalence theorem. This observation is compatible with the closed formulas proved earlier, but it is not needed in the proof of the extended equivalence theorem.

\section{\texorpdfstring{Extended Equivalence of $U(1)$ Chern--Simons and RT Theories}{ Extended Equivalence of U(1) Chern--Simons and RT Theories)}}
\label{sec:extended-equivalence}

\subsection{\texorpdfstring{Closed Equivalence of Abelian RT and $U(1)$ Chern--Simons Theory}{Closed Equivalence of Abelian RT and U(1) Chern-Simons Theory}}
\label{Closed Comparison-Between-rank–one}

In this section, we provide a detailed derivation showing that the two partition functions for the  $U(1)$ Chern--Simons theory and $C(\mathbb Z_k,q_k)$ Abelian RT theory are, in fact, equivalent. On one side, we have a geometric formula \cite{Manoliu2}, obtained from geometric quantization over $U(1)$ flat connections and expressed in terms of Ray--Singer torsion. On the other side, we have the Mattes--Polyak--Reshetikhin invariant \cite{Mattes}, and Murakami--Ohtsuki--Okada independently \cite{Murakami}, constructed in the Reshetikhin--Turaev spirit \cite{Reshetikhin:1991} from the pointed modular category
$\mathcal C(\mathbb Z_k,q_k)$ with
\[
q_k(x)=\exp\!\Bigl(\frac{\pi i}{k}x^2\Bigr),
\qquad
k\in 2\mathbb Z_{>0}.
\]
We show that, once torsion classes, linking pairings, quadratic refinements, and Gauss reciprocity are consistently matched, the two expressions agree exactly.

Let $M$ be a closed, connected, oriented $3$--manifold presented by integral surgery along an $m$--component
oriented framed link $L=L_1\cup\cdots\cup L_m\subset S^3$, with integer symmetric linking matrix
\[
\mathcal L=\mathcal L_L=(\mathcal L_{ij})\in M_m(\mathbb Z),
\qquad
\mathcal L_{ii}=\operatorname{fr}(L_i),\quad
\mathcal L_{ij}=\operatorname{lk}(L_i,L_j)\ (i\neq j).
\]
Write $\sigma=\sigma(\mathcal L)$ for the signature and set
\[
\rho=\operatorname{rank}(\mathcal L),\qquad \nu=\operatorname{null}(\mathcal L)=m-\rho.
\]
It is standard that $\nu=b_1(M)$ and that
\begin{equation}\label{eq:H1-surgery-general}
H_1(M;\mathbb Z)\cong \mathbb Z^\nu\oplus \Tors H_1(M;\mathbb Z),
\qquad
\#\Tors H_1(M;\mathbb Z)=\bigl|\det(\mathcal L_{\mathrm{reg}})\bigr|,
\end{equation}
where $\mathcal L_{\mathrm{reg}}$ denotes any nondegenerate $\rho\times\rho$ block obtained from $\mathcal L$ by an integral change of basis, equivalently, the induced form on $\mathbb Z^m/\ker\mathcal L$. If $L'\subset S^3\setminus L$ is an additional framed link (Wilson lines in physics), we denote its charge vector by $h=(h_1,\dots,h_\ell)\in(\mathbb Z_k)^\ell$, and write $\mathcal L_{LL'}\in M_{m\times\ell}(\mathbb Z)$ for the mixed linking matrix and $\mathcal L_{L'}\in M_\ell(\mathbb Z)$ for the self-linking matrix of $L'$ in $S^3$, so that after surgery, it becomes a framed link in $M$ with the same integer self-linking matrix. Throughout, the Chern--Simons level is denoted $k\in 2\mathbb Z_{>0}.$

In the RT normalization associated with the pointed modular category $\mathcal C(\mathbb Z_k,q_k)$, it is convenient to use the unnormalized Gauss sums \begin{equation}\label{eq:closed-A-functions} A_+(k):=\sum_{s\in\mathbb Z_k}\exp\!\Bigl(\frac{\pi i}{k}s^2\Bigr), \qquad A_-(k):=\sum_{s\in\mathbb Z_k}\exp\!\Bigl(-\frac{\pi i}{k}s^2\Bigr).
\end{equation}
For even $k$ one has
\[
A_+(k)=\sqrt{k}\,e^{\pi i/4},
\qquad
A_-(k)=\sqrt{k}\,e^{-\pi i/4}.
\]
For a surgery link $L$ together with Wilson line insertions $L'_h$, the closed RT surgery expression is

\begin{equation}\label{eq:MPR-with-insertions}
\begin{aligned}
Z^{RT}_{\mathbb Z_k}(L'_h\subset M_L)
&=
k^{-1/2}\,
A_+(k)^{\frac{-m-\sigma}{2}}\,
A_-(k)^{\frac{-m+\sigma}{2}} \\
&\quad \times
\sum_{g\in(\mathbb Z_k)^m}
\exp\!\Biggl(
\frac{\pi i}{k}\Bigl(
\langle g,\mathcal L g\rangle
+
2\langle g,\mathcal L_{LL'}h\rangle
+
\langle h,\mathcal L_{L'}h\rangle
\Bigr)\Biggr),
\end{aligned}
\end{equation}

where $\langle x,Ax\rangle=x^\top A x\in\mathbb Z$. The additional factor $k^{-1/2}$ is the standard closed-manifold RT normalization, and it is exactly the factor needed to match the exponent $m_M$ below in both the $\det\mathcal L\neq 0$ and $\det\mathcal L=0$ cases.

In the closed case $L'=\varnothing$, one may apply quadratic Gauss reciprocity in the form used by Murakami--Ohtsuki--Okada  and Deloup--Turaev \cite{Murakami,deloup2005} to rewrite \eqref{eq:MPR-with-insertions} as a Gauss sum over the cokernel lattice; for $\det\mathcal L\neq 0$ this yields the familiar formula
\begin{equation}\label{eq:MPR-recip-nondeg}
Z^{RT}_{\mathbb Z_k}(M_L)
=
k^{-1/2}\,|\det\mathcal L|^{-1/2}
\sum_{[x]\in \mathbb Z^m/\mathcal L\mathbb Z^m}
\exp\!\Bigl(\pi i k\,x^\top \mathcal L^{-1}x\Bigr),
\end{equation}
where $\mathcal L^{-1}$ is taken over $\mathbb Q$ and the exponent is well defined on the quotient. Manoliu's construction \cite{Manoliu2}, see Section \ref{sec:1}, assigns to a closed oriented $3$--manifold $M$ a finite expression
\begin{equation}\label{eq:Manoliu}
Z^{\mathrm{CS}}_{U(1),k}(M)
=
\frac{k^{m_M}}{\#\Tors H^2(M;\mathbb Z)}
\sum_{p\in\Tors H^2(M;\mathbb Z)} \sigma_{M,p}\;
\int_{\mathcal M_M}(T_M)^{1/2}
=
k^{m_M}\int_{\mathcal M_M}\sigma_M\,(T_M)^{1/2},
\end{equation}
where $\mathcal M_M=H^1(M;U(1))$ is the moduli space of flat $U(1)$--connections, $(T_M)^{1/2}$ is the
Ray--Singer half-density, and $\sigma_{M,p}$ is the Chern--Simons phase on the component labeled by the
torsion class $p$. For connected closed $M$, the exponent is
\begin{equation}\label{eq:mM}
m_M=\tfrac12\bigl(\dim H^1(M;\mathbb R)-\dim H^0(M;\mathbb R)\bigr)=\tfrac12(b_1(M)-1)=\tfrac12(\nu-1).
\end{equation}
In particular, when $\det\mathcal L\neq 0$ one has $\nu=b_1(M)=0$ and thus $m_M=-\tfrac12$, whereas for
$\det\mathcal L=0$ one has $\nu=b_1(M)>0$ and $m_M=(\nu-1)/2$. The first point of contact between the two
sides is the power of $k$.

\begin{prop}\label{prop:power-match}
Let $M=M_L$ be obtained by surgery on $L$ with linking matrix $\mathcal L$ of rank $\rho$ and nullity $\nu$.
Assume $k\in 2\mathbb Z_{>0}$. Then the $k$--dependence coming from the prefactor in
\eqref{eq:MPR-with-insertions} and from Gauss reciprocity yields the net power
$k^{m_M}=k^{(\nu-1)/2}$ of formula \eqref{eq:Manoliu}.
\end{prop}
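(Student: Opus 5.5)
The plan is to track only powers of $k$, treating phases and $k$-independent factors (such as $|\det\mathcal L_{\mathrm{reg}}|^{-1/2}$ and the torsion Gauss sum) as inert. I will start from the prefactor in \eqref{eq:MPR-with-insertions}. Using $A_\pm(k)=\sqrt{k}\,e^{\pm\pi i/4}$, the product $A_+^{(-m-\sigma)/2}A_-^{(-m+\sigma)/2}$ has modulus
\[
k^{\frac14\left((-m-\sigma)+(-m+\sigma)\right)}=k^{-m/2},
\]
and its phase is $e^{-\pi i\sigma/4}$. Together with the closed normalization $k^{-1/2}$, the prefactor therefore contributes $k^{-(m+1)/2}$.

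Next I will evaluate the $k$-power of the Gauss sum $\sum_{g\in\mathbb Z_k^m}\exp(\frac{\pi i}{k}\langle g,\mathcal Lg\rangle)$ by reciprocity, with $r=k$.

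\emph{Nondegenerate case} ($\nu=0$, $\rho=m$). Formula \eqref{eq:deloup-turaev} gives $k^{m/2}e^{\pi i\sigma/4}|\det\mathcal L|^{-1/2}$ times a sum over $\mathbb Z^m/\mathcal L\mathbb Z^m$ whose number of terms is independent of $k$. The net power is then $k^{-(m+1)/2}k^{m/2}=k^{-1/2}=k^{m_M}$. The signature phases cancel as well, which recovers \eqref{eq:MPR-recip-nondeg}.

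\emph{Degenerate case.} First I will use the handle-slide invariance ($\mathcal L\mapsto A^\top\mathcal LA$, $A\in\mathrm{GL}(m,\mathbb Z)$, a bijection of $\mathbb Z_k^m$) to bring $\mathcal L$ to the block form $\mathcal L_{\mathrm{reg}}\oplus 0_\nu$ with $\mathcal L_{\mathrm{reg}}$ nondegenerate of size $\rho$. This can be done integrally because $\ker\mathcal L\cap\mathbb Z^m$ is a primitive sublattice, so a basis of it extends to a basis of $\mathbb Z^m$. The Gauss sum then factors as
\[
k^{\nu}\cdot\sum_{g\in\mathbb Z_k^\rho}\exp\Bigl(\frac{\pi i}{k}\langle g,\mathcal L_{\mathrm{reg}}g\rangle\Bigr),
\]
since each null direction contributes a trivial sum of $k$ ones. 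Applying \eqref{eq:deloup-turaev} to the block gives $k^{\rho/2}$, again times a $k$-independent torsion sum. The total power is
\[
k^{-(m+1)/2}\,k^{\nu}\,k^{\rho/2}=k^{(-m-1+2\nu+\rho)/2}=k^{(\nu-1)/2}
\]
using $m=\rho+\nu$. This matches $m_M$ in \eqref{eq:mM}. The signature is that of $\mathcal L_{\mathrm{reg}}$, so the phases cancel here too.

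The main obstacle is the degenerate case. The general formula \eqref{eq:reciprocity-deg} is stated with an exponent $r^d$ and a factor $\det(\mathcal L_{\mathrm{reg}}/ir)^{-1/2}$, whose $k$-power bookkeeping is not transparent; taken at face value it would give $r^{\nu/2+\rho/2}$ rather than $r^{\nu+\rho/2}$. I therefore avoid relying on it directly and use the explicit block diagonalization above instead, which requires only \eqref{eq:deloup-turaev} and the congruence invariance already established for the $K_2$ move. Finally, I should check that the torsion sum on the right of \eqref{eq:deloup-turaev} contains no hidden $k$-dependence in its number of terms. It is indexed by $\mathbb Z^\rho/\mathcal L_{\mathrm{reg}}\mathbb Z^\rho\cong\Tors H_1(M;\mathbb Z)$, which has $|\det\mathcal L_{\mathrm{reg}}|$ elements and is independent of $k$, so only its phases depend on $k$.
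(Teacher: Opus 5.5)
Your proposal is correct and follows the paper's proof essentially step for step. The prefactor contributes $k^{-(m+1)/2}$, and reciprocity supplies $k^{m/2}$ in the nondegenerate case. In the degenerate case, an integral congruence to $\mathcal L_{\mathrm{reg}}\oplus 0_\nu$ gives $k^{\nu}$ from the null directions and $k^{\rho/2}$ from reciprocity on the regular block, for a net $k^{(\nu-1)/2}$. Your additional remarks make explicit what the paper does implicitly, and both are correct: the primitivity of $\ker\mathcal L\cap\mathbb Z^m$ justifies the integral block form, and you rightly bypass \eqref{eq:reciprocity-deg}, whose factor $r^d$ with $d=\nu/2$ would indeed give the wrong power.
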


\begin{proof}
The magnitude of $
A_+(k)^{\frac{-m-\sigma}{2}}A_-(k)^{\frac{-m+\sigma}{2}}$ 
contributes $k^{-m/2}$ to the $k$--power. If $\det\mathcal L\neq 0$ then $\rho=m$ and the reciprocity step produces the determinant factor
$|\det\mathcal L|^{-1/2}$ and a Gauss sum over $\mathbb Z^m/\mathcal L\mathbb Z^m$.  The original
$(\mathbb Z_k)^m$ sum is traded for this cokernel sum with a compensating factor $k^{m/2}$, so the overall
prefactor in \eqref{eq:MPR-with-insertions} contributes $k^{-1/2}\cdot k^{-m/2}\cdot k^{m/2}=k^{-1/2}.$
This is exactly $k^{m_M}$ since $m_M=-1/2$.

If $\det\mathcal L=0$, choose $U\in GL_m(\mathbb Z)$ with
\[
U^\top \mathcal L U=
\begin{pmatrix}
\mathcal L_{\mathrm{reg}}&0\\
0&0
\end{pmatrix},
\qquad \mathcal L_{\mathrm{reg}}\in M_\rho(\mathbb Z)\ \text{invertible over }\mathbb Q.
\]
Writing $g=(g_{\mathrm{reg}},g_0)\in(\mathbb Z_k)^\rho\times(\mathbb Z_k)^\nu$, the quadratic form depends
only on $g_{\mathrm{reg}}$, so the raw sum factors as
\[
\begin{aligned}
\sum_{g\in(\mathbb Z_k)^m}\exp\!\Bigl(\frac{\pi i}{k}\langle g,\mathcal L g\rangle\Bigr)
&=
\Biggl(\sum_{g_{\mathrm{reg}}\in(\mathbb Z_k)^\rho}
\exp\!\Bigl(\frac{\pi i}{k}\langle g_{\mathrm{reg}},\mathcal L_{\mathrm{reg}}g_{\mathrm{reg}}\rangle\Bigr)\Biggr)\cdot
\Biggl(\sum_{g_0\in(\mathbb Z_k)^\nu}1\Biggr) \\
&=
k^\nu\sum_{g_{\mathrm{reg}}\in(\mathbb Z_k)^\rho}
\exp\!\Bigl(\frac{\pi i}{k}\langle g_{\mathrm{reg}},\mathcal L_{\mathrm{reg}}g_{\mathrm{reg}}\rangle\Bigr).
\end{aligned}
\]

Applying reciprocity to the nondegenerate block produces a compensating factor $k^{\rho/2}$ and the determinant factor $|\det\mathcal L_{\mathrm{reg}}|^{-1/2}$. Collecting the $k$--powers gives
\[
k^{-1/2}\cdot k^{-m/2}\cdot k^\nu\cdot k^{\rho/2}
=
k^{-1/2}\cdot k^{\nu-(\rho+\nu)/2+\rho/2}
=
k^{-1/2}\cdot k^{\nu/2}
=
k^{(\nu-1)/2}
=
k^{m_M},
\]
which is exactly the exponent of Eq. \eqref{eq:mM}.
\end{proof}
This shows that, when $b_1(M)>0$, there are continuous families of flat $U(1)$-connections, which are precisely the zero modes of the quadratic action. These zero modes modify the normalization of the path integral. The factor $k^{m_M}=k^{(b_1(M)-1)/2}$ is exactly the correction arising from those flat directions.

On the RT side, one starts with a sum over $(\mathbb Z_k)^m$ labelings of the surgery link and then uses Gauss reciprocity to rewrite it as a sum over the cokernel of the nondegenerate block $\mathcal L_{\mathrm{reg}}$. When $L$ is degenerate, the null directions contribute an additional factor $k^{\nu}$. After combining this with the remaining normalization factors, the resulting power is exactly $k^{(\nu-1)/2}$. Therefore, Proposition~\ref{prop:power-match} shows that the RT side captures the same zero-mode contribution.

\begin{theorem}\label{thm:torsion-integral}
Let $M$ be a closed, connected, oriented $3$--manifold. Let $(T_M)^{1/2}$ denote the half-density on $\mathcal M_M=H^1(M;U(1))$ arising from Ray--Singer analytic torsion in the normalization of \cite{Manoliu2}. Then \begin{equation}\label{eq:torsion-density-fixed} \int_{\mathcal M_M}(T_M)^{1/2}=\bigl|\Tors H_1(M;\mathbb Z)\bigr|^{1/2}. \end{equation} If moreover $M=M_L$ is given by surgery on $L$ with linking matrix $\mathcal L$ and rank $\rho$, then \begin{equation}\label{eq:torsion-det-fixed} \bigl|\Tors H_1(M;\mathbb Z)\bigr|=\bigl|\det(\mathcal L_{\mathrm{reg}})\bigr|, \qquad \int_{\mathcal M_M}(T_M)^{1/2}=\bigl|\det(\mathcal L_{\mathrm{reg}})\bigr|^{1/2}, \end{equation} and in particular when $\det\mathcal L\neq 0$ one has $\det(\mathcal L_{\mathrm{reg}})=\det(\mathcal L)$.
\end{theorem}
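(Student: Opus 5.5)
We prove Theorem~\ref{thm:torsion-integral} in two stages: first the intrinsic identity \eqref{eq:torsion-density-fixed} from the structure of the Reidemeister torsion, then the surgery identity \eqref{eq:torsion-det-fixed} from the presentation of $H_1(M_L;\mathbb Z)$ already recorded in \eqref{eq:H1-surgery-general}.

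\textbf{Decomposing the moduli space.} Since $H^1(M;U(1))\cong \operatorname{Hom}(H_1(M;\mathbb Z),U(1))$, we have
\[
\mathcal M_M\cong (\mathbb R/\mathbb Z)^{b_1}\times \widehat{T},
\]
where $T=\Tors H_1(M;\mathbb Z)$ and $\widehat T\cong T$ is its Pontryagin dual. The flat connections form $|T|$ identical components, each a torus of dimension $b_1=\nu$.

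The Ray--Singer (equivalently Reidemeister, by Cheeger--M\"uller) torsion of $M$ with trivial real coefficients is an element of the determinant line $\det H^*(M;\mathbb R)$. The key input is Turaev's computation: measured against integral lattice bases of the free parts of $H^*(M;\mathbb Z)$, this element has norm $|T|^{\pm1}$, with the sign fixed by the convention. In Manoliu's normalization \cite{Manoliu2}, Poincar\'e duality $H^0\cong (H^3)^*$ and $H^1\cong (H^2)^*$ converts $T_M$ into a density on $H^1(M;\mathbb R)$. Its square root $(T_M)^{1/2}$ is then $|T|^{-1/2}$ times the lattice density on each component torus $H^1(M;\mathbb R)/H^1(M;\mathbb Z)$, which has unit volume.

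Summing over the $|T|$ components gives
\[
\int_{\mathcal M_M}(T_M)^{1/2}=|T|\cdot|T|^{-1/2}=|T|^{1/2},
\]
which is \eqref{eq:torsion-density-fixed}.

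\textbf{The main obstacle.} The delicate step is pinning down which power of $|T|$ appears in the normalization of \cite{Manoliu2}. One must check that the torsion of the chain complex, after Poincar\'e duality, yields $|T|^{-1}$ per component and not $|T|$, and that the component count supplies the extra factor $|T|$.

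To settle this, we would compute directly for the lens space $L(p,1)$. Here $b_1=0$ and there are $p$ isolated flat connections, and the classical torsion $|T_M|$ of the trivial representation is $1/p$ in this normalization. Each point therefore contributes $p^{-1/2}$, and the total is $p^{1/2}$, consistent with the claim. The general case then follows from multiplicativity of torsion under the splitting of $H_1$ and from the cellular chain complex of a surgery presentation.

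\textbf{Surgery identity.} For the second part, note that $H_1(M_L;\mathbb Z)\cong \operatorname{coker}(\mathcal L:\mathbb Z^m\to\mathbb Z^m)$. Choose $U\in GL_m(\mathbb Z)$ as in the proof of Proposition~\ref{prop:power-match}, so that $U^\top\mathcal L U=\mathcal L_{\mathrm{reg}}\oplus 0$. Then
\[
H_1(M_L;\mathbb Z)\cong \mathbb Z^\nu\oplus \mathbb Z^\rho/\mathcal L_{\mathrm{reg}}\mathbb Z^\rho .
\]
By Smith normal form, $|\mathbb Z^\rho/\mathcal L_{\mathrm{reg}}\mathbb Z^\rho|=|\det\mathcal L_{\mathrm{reg}}|$, which gives the first equation of \eqref{eq:torsion-det-fixed}. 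Substituting into \eqref{eq:torsion-density-fixed} gives the second. When $\det\mathcal L\neq0$ we have $\rho=m$, so $U$ can be taken to be the identity and $\det\mathcal L_{\mathrm{reg}}=\det\mathcal L$.
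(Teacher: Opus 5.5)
Your proposal is correct and follows essentially the paper's route. Cheeger--M\"uller reduces $(T_M)^{1/2}$ to the Reidemeister torsion of the trivial-coefficient complex, which takes the same value on each of the $|T|$ components of $\mathcal M_M$; the surgery identity then comes from $H_1(M_L;\mathbb Z)\cong\operatorname{coker}\mathcal L$, whose torsion part has order $|\det\mathcal L_{\mathrm{reg}}|$. Your bookkeeping (a density of $|T|^{-1}$ relative to the unit-volume lattice density on each component, summed over $|T|$ components) is the reading under which the paper's terse claim that the torsion ``equals $|\Tors H_1(M;\mathbb Z)|$'' actually yields $|T|^{1/2}$. Two parts of your argument add nothing, though. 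The $L(p,1)$ check asserts the value $1/p$ rather than computing it, and the appeal to multiplicativity under the splitting of $H_1$ is not a real argument. Neither is needed: the Franz--Milnor formula already gives $|T|^{\pm1}$ uniformly in $M$, and the exponent is a global convention that you, like the paper, take from Manoliu's normalization.
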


\begin{proof}
By the Ray--Singer conjecture proved by Cheeger and M\"uller \cite{Ray1971,Cheeger1979,Muller1978}, analytic torsion equals Reidemeister torsion. For a closed oriented $3$--manifold, the Reidemeister torsion on a torsion character equals $|\Tors H_1(M;\mathbb Z)|$, and it is constant on connected components of $\mathcal M_M$; integrating the corresponding half-density yields \eqref{eq:torsion-density-fixed} (see the normalization discussion in \cite{Manoliu3}). For a surgery presentation, $H_1(M;\mathbb Z)\cong \mathbb Z^m/\mathcal L\mathbb Z^m$ and the torsion part has order $|\det(\mathcal L_{\mathrm{reg}})|$, giving \eqref{eq:torsion-det-fixed}.
\end{proof}

Theorem~\ref{thm:torsion-integral} has a clear physical meaning. After expanding Abelian Chern--Simons theory around flat connections, the quadratic fluctuation determinant is given by the Ray--Singer analytic torsion, so that $(T_M)^{1/2}$ may be viewed as the residual contribution of quantum fluctuations around the flat sector. The theorem shows that, once this quantity is integrated over the moduli space of flat $U(1)$-connections, the result is not a genuinely metric-dependent analytic object, but instead reduces to the purely topological quantity $\sqrt{|\operatorname{Tors} H_1(M;\mathbb Z)|}.$

This is also consistent with the Cheeger–Müller theorem, which identifies analytic torsion with Reidemeister torsion and hence shows that the fluctuation determinant is topological rather than metric-dependent. In the surgery presentation, the corresponding finite invariant is explicit, since $\bigl|\operatorname{Tors} H_1(M;\mathbb Z)\bigr|=\bigl|\det L_{\mathrm{reg}}\bigr|.$ Thus Theorem~\ref{thm:torsion-integral} shows that the seemingly continuous analytic measure is in fact detecting precisely the finite topology encoded in the torsion subgroup of $H_1(M;\mathbb Z)$.

Let $T:=\Tors H^2(M;\mathbb Z)\cong \Tors H_1(M;\mathbb Z)$, and identify $T\cong \mathbb Z^\rho/\mathcal L_{\mathrm{reg}}\mathbb Z^\rho$ via surgery. The torsion linking form $\lambda:T\times T\to\mathbb Q/\mathbb Z$ is represented by $\mathcal L_{\mathrm{reg}}^{-1}$:
\begin{equation}\label{eq:lambda-fixed}
\lambda([x],[y])\equiv x^\top \mathcal L_{\mathrm{reg}}^{-1}y\pmod{1}.
\end{equation}
A standard surgery quadratic refinement is
\begin{equation}\label{eq:qL-fixed}
q_{\mathcal L}([x])\equiv \tfrac12\,x^\top \mathcal L_{\mathrm{reg}}^{-1}x\pmod{1},
\end{equation}
so that
$q_{\mathcal L}(u+v)-q_{\mathcal L}(u)-q_{\mathcal L}(v)=\lambda(u,v)$.

\begin{theorem}\label{thm:CS-Gauss-fixed}
Fix the Chern--Simons convention so that for $p\in T$, we have
\begin{equation}\label{eq:sigmaMp-fixed}
\sigma_{M,p}=\exp\!\bigl(2\pi i\,k\,q_{\mathcal L}(p)\bigr).
\end{equation}
Then the torsion sum in \eqref{eq:Manoliu} is the Gauss sum of $q_{\mathcal L}$:
\begin{equation}\label{eq:Gauss-fixed}
\sum_{p\in \Tors H^2(M;\mathbb Z)}\sigma_{M,p}
=
\sum_{[x]\in \mathbb Z^\rho/\mathcal L_{\mathrm{reg}}\mathbb Z^\rho}
\exp\!\Bigl(\pi i k\,x^\top \mathcal L_{\mathrm{reg}}^{-1}x\Bigr).
\end{equation}
\end{theorem}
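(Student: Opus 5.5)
The identity is essentially a reindexing: under the convention \eqref{eq:sigmaMp-fixed}, both sides are sums of the same function over the same finite group. The plan has three steps: identify the index sets, substitute the phases, and check that every term is well defined.

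\textbf{Step 1: identify the index sets.} Use the identification $T=\Tors H^2(M;\mathbb Z)\cong \Tors H_1(M;\mathbb Z)$, which follows from Poincar\'e duality and universal coefficients. Combine it with the surgery description \eqref{eq:H1-surgery-general}. Choose $U\in GL_m(\mathbb Z)$ as in the proof of Proposition~\ref{prop:power-match}, so that $U^\top\mathcal L U=\mathcal L_{\mathrm{reg}}\oplus 0$. Then
\[
\mathbb Z^m/\mathcal L\mathbb Z^m\cong \mathbb Z^\rho/\mathcal L_{\mathrm{reg}}\mathbb Z^\rho\oplus\mathbb Z^\nu ,
\]
and the torsion summand is exactly $\mathbb Z^\rho/\mathcal L_{\mathrm{reg}}\mathbb Z^\rho$. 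This gives a bijection $p\leftrightarrow[x]$ between the two index sets, and it is the one already used in defining $\lambda$ and $q_{\mathcal L}$ in \eqref{eq:lambda-fixed}--\eqref{eq:qL-fixed}.

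\textbf{Step 2: substitute the phases.} Insert \eqref{eq:sigmaMp-fixed} and \eqref{eq:qL-fixed} into the left-hand side:
\[
\sigma_{M,p}=\exp\bigl(2\pi i k\cdot\tfrac12 x^\top\mathcal L_{\mathrm{reg}}^{-1}x\bigr)=\exp\bigl(\pi i k\, x^\top\mathcal L_{\mathrm{reg}}^{-1}x\bigr).
\]
Summing over the bijection of Step 1 then gives \eqref{eq:Gauss-fixed} term by term.

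\textbf{Step 3: check well-definedness.} This is the only real content, and I expect it to be the main obstacle.

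The modulus condition is that $q_{\mathcal L}$ is only defined mod $1$, and only after fixing a representative. Since $k$ is even, $2\pi i k\,q_{\mathcal L}$ is defined modulo $2\pi i k\mathbb Z\subset 2\pi i\mathbb Z$, so the ambiguity mod $1$ is harmless.

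The representative condition is that the term is independent of the representative of $[x]$. Replace $x$ by $x+\mathcal L_{\mathrm{reg}}y$ with $y\in\mathbb Z^\rho$. Since $\mathcal L_{\mathrm{reg}}$ is symmetric, the exponent changes by
\[
\pi i k\bigl(2x^\top y+y^\top\mathcal L_{\mathrm{reg}}y\bigr).
\]
The first term lies in $2\pi i\mathbb Z$ automatically. The second lies in $2\pi i\mathbb Z$ because $k$ is even. Note that this works without requiring $\mathcal L_{\mathrm{reg}}$ to have even diagonal; the evenness of $k$ carries the argument.

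\textbf{Residual dependence on the basis change $U$.} A different choice of $U$ changes $x$ by an integral automorphism of the torsion group that preserves $\mathcal L_{\mathrm{reg}}^{-1}$ modulo the relevant lattice. So the right-hand side is simply the same sum reparametrized, and the equality holds for any admissible choice.
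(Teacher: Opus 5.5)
Your proposal is correct and follows the same route as the paper: once the convention $\sigma_{M,p}=\exp(2\pi i k\,q_{\mathcal L}(p))$ is in force, the identity is a term-by-term reindexing over the surgery identification $T\cong\mathbb Z^\rho/\mathcal L_{\mathrm{reg}}\mathbb Z^\rho$, the same one used to define $\lambda$ and $q_{\mathcal L}$. Your Step 3 spells out what the paper leaves implicit: each term $\exp(\pi i k\,x^\top\mathcal L_{\mathrm{reg}}^{-1}x)$ is well defined on the cokernel because $k$ is even, even though $q_{\mathcal L}$ itself is only defined modulo $\tfrac12$ when $\mathcal L_{\mathrm{reg}}$ is odd, and the sum is unchanged under $\mathcal L_{\mathrm{reg}}\mapsto A^\top\mathcal L_{\mathrm{reg}}A$ with $A\in GL_\rho(\mathbb Z)$.
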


\begin{proof}
The refinement identity follows by direct expansion of \eqref{eq:qL-fixed}. The identification of the
Abelian Chern--Simons action on torsion classes with a quadratic refinement of the torsion linking pairing is
classical (see e.g.\ Freed--Quinn \cite{Freed:1991} and the surgery discussion surrounding \cite{deloup2005}); fixing the sign convention as in \eqref{eq:sigmaMp-fixed} yields \eqref{eq:Gauss-fixed}.
\end{proof}

This tells us that each discrete torsion component on $p\in T$ contributes a phase given by the classical Chern--Simons action, and this action is naturally a quadratic function on the finite group of torsion classes. Consequently, the closed-manifold partition function reduces to a sum of phases $e^{2\pi ik\, q(u)}$ over all discrete flat bundles. In other words, the Abelian Chern--Simons path integral becomes a finite-dimensional Gaussian sum. The quadratic form $q_L$ is a refinement of the torsion linking pairing $\lambda \colon T \times T \to \mathbb Q/\mathbb Z.$ Hence, the action is completely determined by the linking behavior of torsion cycles in the manifold.

From the surgical point of view, this is exactly the expected algebraic framework. The previous theorem therefore shows that the RT state-sum expression is not simply analogous to the Chern--Simons path integral, but coincides with it after identifying the torsion group and its quadratic data. Hence, the closed-manifold content of Abelian Chern--Simons theory is encoded by a finite quadratic module, namely a finite Abelian group equipped with a quadratic refinement of its linking pairing, as we will show in more detail later.

After putting together Proposition \ref{prop:power-match}, Theorem \ref{thm:torsion-integral}, and Theorem \ref{thm:CS-Gauss-fixed}, we can understand the $U(1)$ Chern-Simons partition function as follows: 

\begin{equation}\label{closed-CS-Z}
Z^{\mathrm{CS}}_{U(1),k}(M_L)
=
\underbrace{k^{(b_1(M)-1)/2}}_{\substack{\text{Free part /}\\ \text{Continuous flat directions}}}
\;
\underbrace{\left|\operatorname{Tors} H_1(M;\mathbb Z)\right|^{-1/2}}_{\substack{\text{Torsion}\\ \text{ normalization factor}}}
\;
\underbrace{\sum_{x\in \operatorname{Tors} H_1(M;\mathbb Z)} e^{2\pi i k\, q_M(x)}}_{\substack{\text{Finite quadratic Gauss sum}\\ \text{from the torsion linking form}}}
\end{equation}

\noindent
Therefore, the closed \(U(1)\) Chern--Simons partition function decomposes into a contribution from the free part of \(H_1(M;\mathbb Z)\), the torsion half-density factor, and a finite sum over the torsion components, whose phases are determined by the quadratic refinement of the torsion linking pairing. We are now ready to show the equivalence theorem for the closed partition function invariants $Z^{RT}_{Z_k}(M_L)$ and $Z^{CS}_{U(1),k}(M_L)$.

\begin{theorem}
\label{thm:closed-equivalence}
Let $M=M_L$ be obtained by surgery on a framed link $L$ with linking matrix $\mathcal L$ of rank $\rho$ and
signature $\sigma(\mathcal L)$. Fix $k\in 2\mathbb Z_{>0}$ and use the normalizations
\eqref{eq:MPR-with-insertions} and \eqref{eq:Manoliu}. Then, for $L'=\varnothing$,
\begin{equation}\label{eq:main-fixed}
Z^{RT}_{\mathbb Z_k}(M_L)
= Z^{\mathrm{CS}}_{U(1),k}(M_L),
\end{equation}
where $\mathcal L_{\mathrm{reg}}$ is any nondegenerate $\rho\times\rho$ block representing the induced form
on $\mathbb Z^m/\ker\mathcal L$. In particular:
\begin{itemize}
\item if $\det\mathcal L\neq 0$ then $\rho=m$ and \eqref{eq:main-fixed} becomes the rational homology sphere
case with $\mathcal L_{\mathrm{reg}}=\mathcal L$;
\item if $\det\mathcal L=0$ then $\rho<m$ and the right-hand side uses the torsion data determined by
$\mathcal L_{\mathrm{reg}}$, while the factor $k^{m_M}$ uses $m_M=(\nu-1)/2$ with $\nu=b_1(M)$.
\end{itemize}
\end{theorem}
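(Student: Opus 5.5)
The plan is to compute both sides of \eqref{eq:main-fixed} in the common normal form \eqref{closed-CS-Z}, and to do it in the same order on each side: power of $k$, determinant or torsion factor, finite Gauss sum, phase.

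\textbf{RT side.} I start from \eqref{eq:MPR-with-insertions} with $h$ absent. First I choose $U\in GL_m(\mathbb Z)$ block-diagonalizing $\mathcal L$ as in the proof of Proposition~\ref{prop:power-match}. Handle-slide invariance (the $K_2$ argument of Section~\ref{sec:abelian-surgery}) shows that the substitution $g\mapsto U^{-1}g$ leaves the $(\mathbb Z_k)^m$ sum unchanged. Moreover, $\sigma(\mathcal L)=\sigma(\mathcal L_{\mathrm{reg}})$, since the null block contributes nothing to the signature. The null directions then give the factor $k^\nu$. Next I apply Deloup--Turaev reciprocity \eqref{eq:deloup-turaev} with $r=k$ to the $\rho\times\rho$ block. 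This produces
\[
k^{\rho/2}\,e^{\frac{\pi i}{4}\sigma}\,|\det\mathcal L_{\mathrm{reg}}|^{-1/2}\sum_{[x]\in\mathbb Z^\rho/\mathcal L_{\mathrm{reg}}\mathbb Z^\rho}\exp\!\bigl(-\pi i k\,x^\top\mathcal L_{\mathrm{reg}}^{-1}x\bigr).
\]
By Proposition~\ref{prop:power-match}, the powers of $k$ combine to $k^{(\nu-1)/2}=k^{m_M}$. The phase of the prefactor is $e^{\frac{\pi i}{4}\cdot\frac{-m-\sigma}{2}}e^{-\frac{\pi i}{4}\cdot\frac{-m+\sigma}{2}}=e^{-\frac{\pi i}{4}\sigma}$, by \eqref{eq:gauss-evaluation}. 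This cancels the reciprocity phase $e^{\frac{\pi i}{4}\sigma}$, which is how I will obtain \eqref{eq:MPR-recip-nondeg} together with its degenerate analogue.

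\textbf{CS side.} I start from \eqref{eq:Manoliu}. Theorem~\ref{thm:torsion-integral} together with \eqref{eq:H1-surgery-general} turns the torsion half-density into a power of $|\det\mathcal L_{\mathrm{reg}}|$. Theorem~\ref{thm:CS-Gauss-fixed} identifies the sum of the phases $\sigma_{M,p}$ with the Gauss sum of $q_{\mathcal L}$ over $\mathbb Z^\rho/\mathcal L_{\mathrm{reg}}\mathbb Z^\rho$. With the $1/\#\Tors$ factor this gives $|\det\mathcal L_{\mathrm{reg}}|^{-1}\cdot|\det\mathcal L_{\mathrm{reg}}|^{1/2}=|\det\mathcal L_{\mathrm{reg}}|^{-1/2}$, which is the form \eqref{closed-CS-Z}. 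I then compare the two sides term by term. The $k$-powers agree by Proposition~\ref{prop:power-match}, the determinant factors agree by Theorem~\ref{thm:torsion-integral}, and the Gauss sums are taken over the same group $T$, so it remains to compare them.

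\textbf{Main obstacle: the sign of the Gauss sum.} Reciprocity yields $\exp(-\pi i k\,x^\top\mathcal L^{-1}x)$, whereas \eqref{eq:Gauss-fixed} has the opposite sign. For this I would first try the symmetry $x\mapsto -x$, but it does not flip the sign of a quadratic phase. Instead I will use the orientation and sign convention fixed in \eqref{eq:sigmaMp-fixed}. Theorem~\ref{thm:CS-Gauss-fixed} explicitly lets us choose that convention, and the natural choice is the one in which the CS torsion phase is $e^{-2\pi i k q_{\mathcal L}}$ relative to the surgery-oriented linking form, equivalently the linking form of $M$ represented by $-\mathcal L^{-1}$. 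With that choice the two sums coincide. If a residual discrepancy remains, it is at most complex conjugation, i.e.\ orientation reversal of $M$.

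The degenerate case needs separate care. There I must check that the reciprocity output lives on $\mathbb Z^\rho/\mathcal L_{\mathrm{reg}}\mathbb Z^\rho$ rather than on the lattice quotient in \eqref{eq:reciprocity-deg}. I also need that the $U$-change of basis leaves the linking form unchanged up to isomorphism. This follows because $U$ induces an isometry of the torsion linking forms.
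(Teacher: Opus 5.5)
Your route is the paper's own. It runs through unimodular block-diagonalization of $\mathcal L$, the factor $k^\nu$ from the null block, and Deloup--Turaev reciprocity on $\mathcal L_{\mathrm{reg}}$. The prefactor phase $e^{-\pi i\sigma/4}$ then cancels the reciprocity phase, Proposition~\ref{prop:power-match} gives the power of $k$, and Theorems~\ref{thm:torsion-integral} and~\ref{thm:CS-Gauss-fixed} handle the Chern--Simons side. Your degenerate-case worry is settled by the factorization itself. After the change of basis you only ever apply the nondegenerate formula to $\mathcal L_{\mathrm{reg}}$, and $\Tors(\mathbb Z^m/\mathcal L\mathbb Z^m)\cong\mathbb Z^\rho/\mathcal L_{\mathrm{reg}}\mathbb Z^\rho$, so \eqref{eq:reciprocity-deg} is never needed.

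The sign you isolate is a genuine problem, and it is exactly the step the paper's proof passes over. The paper asserts that reciprocity yields ``the same Gauss sum'' as \eqref{eq:Gauss-fixed}, and \eqref{eq:MPR-recip-nondeg} is printed with $\exp(+\pi i k\,x^\top\mathcal L^{-1}x)$. But \eqref{eq:deloup-turaev} combined with the prefactor phase gives $\exp(-\pi i k\,x^\top\mathcal L^{-1}x)$, as you found. Note also that your earlier sentence claiming you ``obtain \eqref{eq:MPR-recip-nondeg}'' is therefore inaccurate: you obtain it with the opposite sign.

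If the conventions \eqref{eq:qL-fixed}--\eqref{eq:sigmaMp-fixed} are taken literally, the two sides are complex conjugates and \eqref{eq:main-fixed} is false. For $+3$-surgery on the unknot and $k=2$, the RT side is $2^{-1/2}(1-i)/(1+i)=-i/\sqrt2$. The Chern--Simons side is $6^{-1/2}\bigl(1+2e^{2\pi i/3}\bigr)=i/\sqrt2$.

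So your switch to $\sigma_{M,p}=\exp(-\pi i k\,x^\top\mathcal L_{\mathrm{reg}}^{-1}x)$ is not something Theorem~\ref{thm:CS-Gauss-fixed} ``lets'' you do, since that theorem fixes the opposite sign. It is a correction the statement needs in order to be true. It is also the geometrically standard choice: in the usual convention the torsion linking form of $M_L=\partial W_L$ is represented by $-\mathcal L_{\mathrm{reg}}^{-1}$. Already for $p$-surgery on the unknot one computes $\lambda(\mu,\mu)=-1/p$.

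Present this as replacing \eqref{eq:lambda-fixed} and \eqref{eq:qL-fixed} by their negatives. Delete the fallback about a residual discrepancy ``at most complex conjugation.'' Conjugation amounts to reversing the orientation of $M$ and is not the asserted equality, while with the corrected sign no discrepancy remains.
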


\begin{proof}
Insert Theorem~\ref{thm:torsion-integral} and Theorem~\ref{thm:CS-Gauss-fixed} into expression \eqref{eq:Manoliu}. Using \eqref{eq:torsion-det-fixed} and \eqref{eq:Gauss-fixed} gives
\[
Z^{\mathrm{CS}}_{U(1),k}(M_L)
=
k^{m_M}\,|\det(\mathcal L_{\mathrm{reg}})|^{-1/2}
\sum_{[x]\in \mathbb Z^\rho/\mathcal L_{\mathrm{reg}}\mathbb Z^\rho}
\exp\!\Bigl(\pi i k\,x^\top \mathcal L_{\mathrm{reg}}^{-1}x\Bigr).
\]
On the RT side, apply Deloup--Turaev reciprocity to the nondegenerate block $\mathcal L_{\mathrm{reg}}$, and use the trivial factorization over the null directions when $\det\mathcal L=0$ to obtain the corresponding expression with the same Gauss sum and the determinant factor $|\det(\mathcal L_{\mathrm{reg}})|^{-1/2}$. Proposition~\ref{prop:power-match} shows that the remaining $k$--power is exactly $k^{m_M}$ in both regimes $\det\mathcal L\neq 0$ and $\det\mathcal L=0$. Combining these identifications yields \eqref{eq:main-fixed}.
\end{proof}

Theorem~\ref{thm:closed-equivalence} identifies the closed invariants exactly. However, equality of closed scalars alone does not yet imply equality of bordism operators, since the latter are defined through the handlebody pairing and strict functoriality requires the extended formalism. We therefore pass to the extended theories. On the RT side this incorporates the standard Walker--Turaev correction removing the projective Maslov anomaly; on the Chern--Simons side one uses extended formalism with the corresponding Maslov--Walker phase. The remaining task is to verify that the two extended normalizations are compatible for the canonical handlebody closures, so that the induced matrix elements agree.

\subsection{Boundary Operator Equivalence}

In the previous sections we compared the closed partition functions arising from two constructions of Abelian Chern--Simons theory at level $k$: the geometric quantization approach by Manoliu \cite{Manoliu2,Manoliu3} and the surgery approach by Mattes--Polyak--Reshetikhin (MPR) \cite{Mattes}, which is the Abelian Reshetikhin--Turaev surgery TQFT \cite{Reshetikhin:1991}. We now put this comparison in a conceptually cleaner form: after passing to the extended formalism the two constructions define the \emph{same extended $(2+1)$-dimensional TQFT}, not merely the same closed $3$--manifold invariants.

Let \[ M_1:(\Sigma_0,\lambda_0)\longrightarrow(\Sigma_1,\lambda_1), \qquad M_2:(\Sigma_1,\lambda_1)\longrightarrow(\Sigma_2,\lambda_2) \] be composable extended bordisms. Write $\partial M_1=-\Sigma_0\sqcup \Sigma_1$, $\partial M_2=-\Sigma_1\sqcup \Sigma_2.$ To form the composite \(M_2\circ M_1\), one glues along \(\Sigma_1\). The relevant symplectic vector space is $\mathcal V_{\Sigma_1}:=H_1(\Sigma_1;\mathbb R),$ equipped with the intersection pairing. The gluing determines the standard triple of Lagrangian subspaces in {\small\(\mathcal V_{\Sigma_1}\):
\[
L_{M_1}:=\operatorname{Im}\!\big(H_1(M_1;\mathbb R)\to H_1(\Sigma_1;\mathbb R)\big),
\,\,\,
\lambda_1\subset H_1(\Sigma_1;\mathbb R),
\,\,\,
L_{M_2}:=\operatorname{Im}\!\big(H_1(M_2;\mathbb R)\to H_1(\Sigma_1;\mathbb R)\big),
\]}
where the incoming boundary of \(M_2\) is identified with \(\Sigma_1\). The Maslov index appearing in the gluing law is therefore
\[
\mu(M_1,M_2):=\mu\!\bigl(L_{M_1},\lambda_1,L_{M_2}\bigr)\in\mathbb Z.
\]

\begin{prop}
\label{prop:Maslov-composition}
Let \(Z^{RT,\mathrm{raw}}_{\mathbb Z_k}\) denote the uncorrected Reshetikhin--Turaev operator defined by the
handlebody pairing construction. Then
\begin{equation}\label{eq:Maslov-correction-raw}
Z^{RT,\mathrm{raw}}_{\mathbb Z_k}(M_2\circ M_1)
=
\kappa^{\,\mu(M_1,M_2)}\,
Z^{RT,\mathrm{raw}}_{\mathbb Z_k}(M_2)\circ
Z^{RT,\mathrm{raw}}_{\mathbb Z_k}(M_1),
\end{equation}
where \(\mu(M_1,M_2)=\mu(L_{M_1},\lambda_1,L_{M_2})\) is the Maslov index of the standard gluing triple and \(\kappa\in\mathbb C^\times\) is the anomaly constant of the modular category.

For the pointed Abelian category $\mathcal C(\mathbb Z_k,q_k)$, with $q_k(x)=\exp\!(\frac{\pi i}{k}x^2)$ and $ k\in 2\mathbb Z_{>0},$ the anomaly constant is the phase of the corresponding quadratic Gauss sum; with the normalization fixed in Theorem~\ref{thm:RT-boundary-TQFT}, this phase is $\kappa=e^{-\pi i/4}.$
\end{prop}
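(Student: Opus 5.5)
The plan is to deduce the first statement, the raw gluing law \eqref{eq:Maslov-correction-raw}, directly from Turaev's gluing theorem as recalled in the proof of Theorem~\ref{thm:RT-boundary-TQFT}, specialized to $\mathcal C=\mathcal C(\mathbb Z_k,q_k)$. Theorem~\ref{thm:Abelian-pointed-extended-TQFT}(i) shows this category is modular, so the construction applies. The raw operators are defined by the handlebody pairing \eqref{eq:Turaev-pairing-def-fixed}. Hence both sides of \eqref{eq:Maslov-correction-raw} are determined by their matrix elements $\langle w,\,\cdot\,(v)\rangle$, and we may test on basis handlebody states $v,w$.

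The left side then equals the closed invariant of $H_{\mathrm{out}}(w)\cup M_2\cup M_1\cup H_{\mathrm{in}}(v)$. To evaluate the right side, insert a resolution of the identity on $\mathcal V(\Sigma_1,\lambda_1)$, using the handlebody basis $\{e_x\}$ and the dual covectors $e_x^\vee$ of Proposition~\ref{prop:torus-pairing}. This expresses the right side through closed surgery scalars of the two separated closures, which are then recombined into one closed manifold. Each closed scalar has the form \eqref{eq:Zk-final}, and its prefactor depends on the signature $\sigma$ of the surgery matrix. The core computation is a Wall--Maslov non-additivity argument. The surgery matrix of the glued closure is block-assembled from those of the pieces, and its signature differs from the sum of the pieces' signatures by exactly $\mu(L_{M_1},\lambda_1,L_{M_2})$. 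This is Wall's formula, with the triple of Lagrangians being the images in $H_1(\Sigma_1;\mathbb R)$ described before the statement. The $k$-power bookkeeping, with $A_\pm$ of modulus $\sqrt k$, is absorbed by the Fourier normalization of the torus pairing exactly as in Proposition~\ref{prop:power-match}, leaving only a phase.

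For the second statement, I track that leftover phase. From \eqref{eq:kirby-normalization}, a signature shift $\Delta\sigma$ multiplies the normalization by
\[
A_+(k)^{-\Delta\sigma/2}A_-(k)^{\Delta\sigma/2}=\Bigl(\frac{A_-(k)}{A_+(k)}\Bigr)^{\Delta\sigma/2}.
\]
By \eqref{eq:gauss-evaluation} this equals $(e^{-\pi i/2})^{\Delta\sigma/2}=e^{-\pi i\Delta\sigma/4}$. Setting $\Delta\sigma=\mu(M_1,M_2)$ gives $\kappa=e^{-\pi i/4}$, which is the inverse phase of the Gauss sum $A_+(k)$, as claimed. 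This also matches the general formula $\kappa=(p_-/p_+)^{1/2}$ with $p_\pm=A_\pm(k)$ for the pointed category, whose central charge is $1$ since $A_+(k)/|A_+(k)|=e^{2\pi i/8}$.

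The main obstacle is the sign conventions. The sign of $\Delta\sigma$ relative to $\mu$ depends on the orientation conventions for the triple $(L_{M_1},\lambda_1,L_{M_2})$, and $L_{M_i}$ are images of $H_1$ rather than kernels. My first step would be to calibrate on one example, gluing two solid tori into $S^3$ versus into $S^1\times S^2$ and a lens space, so that the Maslov index is $\pm1$. I would compute both sides explicitly with the Hopf-link pairing $\Omega_k$ and fix the sign so that the phase comes out as $e^{-\pi i/4}$ rather than $e^{+\pi i/4}$. I would then rely on Turaev's general theorem for every other case.
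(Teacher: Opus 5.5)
Your overall route matches the paper's, whose proof only invokes Turaev's projective gluing law and asserts that $\kappa$ is the phase of the Gauss sum. Your derivation of the constant goes further and is the right mechanism, the one behind Turaev's proof:
\begin{itemize}
\item By \eqref{eq:kirby-normalization} the normalization carries the phase $e^{-\pi i\sigma/4}=(A_-(k)/\sqrt k)^{\sigma}$.
\item The colour sums compose strictly and the $|A_\pm|=\sqrt k$ factors cancel.
\item Only the signature defect survives, and Wall non-additivity identifies it with $\pm\mu$.
\end{itemize}
The sign is a convention, not something to calibrate. With the convention the paper itself adopts later in \eqref{eq:signature-cocycle}, namely $\Delta\sigma=+\mu$, your computation gives exactly $\kappa=e^{-\pi i/4}$. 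The opposite orientation convention for $\mu$ gives $e^{+\pi i/4}$.

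The step that would fail is the direct verification, and the genus-one calibration, as you set them up with the covectors $e_x^\vee$ of Proposition~\ref{prop:torus-pairing}. Those covectors come from the meridian--longitude gluing that produces $S^3$. So the outgoing standard solid torus has a Lagrangian $\lambda_1'$ transverse to $\lambda_1$, and raw operators defined by pairing against these handlebodies via \eqref{eq:Turaev-pairing-def-fixed} do not obey the law in the stated form. Two genus-one examples show this:
\begin{itemize}
\item Take $M_2$ to be that standard outgoing solid torus. Straight from the definition, $Z^{RT,\mathrm{raw}}_{\mathbb Z_k}(M_2\circ M_1)=Z^{RT,\mathrm{raw}}_{\mathbb Z_k}(M_2)\circ Z^{RT,\mathrm{raw}}_{\mathbb Z_k}(M_1)$ holds identically. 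Yet $\mu(L_{M_1},\lambda_1,\lambda_1')=\pm1$ whenever $L_{M_1}$ differs from both.
\item Take $M_1$ with meridian $\alpha+\beta$ and $M_2$ with meridian $\alpha$, so $L_{M_2}=\lambda_1$ and $\mu=0$. The composite is $S^3$, but the two sides differ by $\sqrt k/A_\pm(k)\neq 1$.
\end{itemize}
So in that convention the exponent itself is wrong, and no sign choice repairs it.

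Wall's formula produces the triple $(L_{M_1},\lambda_1,L_{M_2})$ only under two conditions. First, the trace $4$-manifolds of the two closures must be glued along one common handlebody with Lagrangian $\lambda_1$. That means covectors of $\mathcal V(\Sigma_1,\lambda_1)$ are represented by $-H$ glued by the identity, so the pairing is computed in $\#^g(S^1\times S^2)$, not by $\Omega_k$. Second, $L_{M_1}$ must be Turaev's pushed-forward Lagrangian, the kernel of $H_1(\Sigma_1;\mathbb R)\to H_1(M_1\cup_{\Sigma_0}H_{\mathrm{in}};\mathbb R)$, not an ``image''.

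In that setup your resolution of the identity works. A genus-one test with meridians $\beta$ and $\alpha\pm\beta$ then gives mutually inverse phases $e^{\pm\pi i/4}$ for opposite Maslov indices, as the law requires. This mismatch is inherited from the paper, whose proof avoids it only by citing Turaev.
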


\begin{proof}
Equation \eqref{eq:Maslov-correction-raw} is the standard projective gluing law in Turaev's extended RT theory; see \cite[Sec.~IV.3]{Turaev1994}. The gluing anomaly is measured by the Maslov index of the standard triple of Lagrangian subspaces attached to the cut boundary. Passing from raw bordisms to weighted extended bordisms restores strict functoriality.

In the Abelian case, the anomaly constant is determined by the quadratic Gauss sum of the twists of the simple objects. For the chosen normalization of \(\mathcal C(\mathbb Z_k,q_k)\), this phase is \(e^{-\pi i/4}\).
\end{proof}

\begin{remark}
From this point onward, $Z^{RT}_{\mathbb Z_k}:\Cob^{\mathrm{ext}}_{2+1}\to \mathrm{Vect}_{\mathbb C}$
denotes the Walker--normalized Maslov--corrected extended Reshetikhin--Turaev functor already constructed in
Theorem~\ref{thm:RT-boundary-TQFT}. Thus, for a closed extended \(3\)-manifold \((M,n)\),
\begin{equation}\label{eq:corrected-closed-RT-scalar}
Z^{RT}_{\mathbb Z_k}(M,n)
:=
\kappa^{-n}\,Z^{RT,\mathrm{raw}}_{\mathbb Z_k}(M),
\qquad
\kappa=e^{-\pi i/4},
\end{equation}
where \(n\in\mathbb Z/8\mathbb Z\) is the Walker--Turaev weight.
\end{remark}

The preceding theorems already provide a rigorous extended RT TQFT in Turaev's sense. The next theorem does \emph{not} redefine that TQFT. Its purpose is to identify, for the special closed extended \(3\)-manifolds arising as handlebody closures in the pairing construction, the extended normalization carried by the RT side with the corresponding extended normalization on the Chern--Simons side. This is the additional input needed to pass from the exact closed equivalence theorem of Section~\ref{Closed Comparison-Between-rank–one} to the equality of boundary matrix elements in the handlebody pairing construction.

\begin{theorem}
\label{thm:canonical-weighted-closure}
Fix \(k\in 2\mathbb Z_{>0}\). Let $X:(\Sigma_{\mathrm{in}},\lambda_{\mathrm{in}})
\longrightarrow
(\Sigma_{\mathrm{out}},\lambda_{\mathrm{out}})$ be an extended bordism, and let
\[
v\in \mathcal V^{RT}_{k}(\Sigma_{\mathrm{in}},\lambda_{\mathrm{in}}),
\qquad
w\in \mathcal V^{RT}_{k}(\Sigma_{\mathrm{out}},\lambda_{\mathrm{out}})^*.
\]
Choose standard handlebody representatives
\[
H_{\mathrm{in}}(v):\varnothing\to(\Sigma_{\mathrm{in}},\lambda_{\mathrm{in}}),
\qquad
H_{\mathrm{out}}(w):(\Sigma_{\mathrm{out}},\lambda_{\mathrm{out}})\to\varnothing,
\]
and form the closed extended \(3\)-manifold
\[
M_{v,w}:=
H_{\mathrm{out}}(w)\cup_{\Sigma_{\mathrm{out}}}
X
\cup_{\Sigma_{\mathrm{in}}}
H_{\mathrm{in}}(v).
\]
Write \(|M_{v,w}|\) for the underlying closed oriented \(3\)-manifold, and let
\(n(M_{v,w})\in \mathbb Z/8\mathbb Z\) denote its Walker weight. Let
\(\mathcal L_{v,w}\) be any integral surgery matrix for \(|M_{v,w}|\), and let
\(\mathcal L_{v,w,\mathrm{reg}}\) denote the induced nondegenerate form on
\(\mathbb Z^m/\ker \mathcal L_{v,w}\).

Then:
\begin{enumerate}
\item[\textup{(i)}]
The Walker weight of the canonical handlebody closure satisfies
\begin{equation}\label{eq:walker-equals-signature}
n(M_{v,w})
\equiv
\sigma(\mathcal L_{v,w,\mathrm{reg}})
\pmod 8.
\end{equation}

\item[\textup{(ii)}]
The Walker-normalized extended RT scalar satisfies
\begin{equation}\label{eq:walker-normalized-closure-formula}
Z^{RT}_{\mathbb Z_k}(M_{v,w})
=
e^{\frac{\pi i}{4}\sigma(\mathcal L_{v,w,\mathrm{reg}})}
\,Z^{RT,\mathrm{raw}}_{\mathbb Z_k}(|M_{v,w}|).
\end{equation}

\item[\textup{(iii)}]
The right-hand side of \eqref{eq:walker-normalized-closure-formula} is independent of the chosen surgery
presentation and of the chosen standard handlebody representatives of \(v\) and \(w\).

\item[\textup{(iv)}]
One has the exact equality of closed extended scalars
\begin{equation}\label{eq:closure-extended-equality}
Z^{RT}_{\mathbb Z_k}(M_{v,w})
=
Z^{CS}_{U(1),k}(M_{v,w}).
\end{equation}
\end{enumerate}
\end{theorem}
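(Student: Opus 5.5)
The argument runs through the four parts in order, each resting on the previous one and on the closed comparison of Theorem~\ref{thm:closed-equivalence}. For (i), I will compute the Walker weight of the handlebody closure by the additivity of weights under gluing. In Walker's convention the weight of a composite is the sum of the weights of the pieces plus the Maslov index of the gluing triple. The handlebodies $H_{\mathrm{in}}(v)$ and $H_{\mathrm{out}}(w)$ are standard, so their weights are zero, and their Lagrangians $L_{H}$ agree with $\lambda_{\mathrm{in}}$ and $\lambda_{\mathrm{out}}$ by the defining condition $\ker(H_1(\Sigma)\to H_1(H))=\lambda$. Hence the Maslov corrections at the two gluings reduce to indices of triples in which two entries coincide. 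Those vanish, leaving the weight carried by $X$ together with the canonical weight of the resulting closed manifold.

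To compare this with $\sigma(\mathcal L_{v,w,\mathrm{reg}})$, I will realise $|M_{v,w}|$ as the boundary of the $4$-dimensional $2$-handlebody $W_{\mathcal L}$ given by the surgery link. I will then use Walker's (equivalently Turaev's) identification of the canonical weight of a closed manifold with the signature of a bounding $4$-manifold. The signature of $W_{\mathcal L}$ equals $\sigma(\mathcal L_{v,w})$. Since the null directions of $\mathcal L_{v,w}$ contribute nothing to the signature, this equals $\sigma(\mathcal L_{v,w,\mathrm{reg}})$. Reducing mod $8$ gives \eqref{eq:walker-equals-signature}.

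Part (ii) is then formal. I substitute (i) into the definition \eqref{eq:corrected-closed-RT-scalar} with $\kappa=e^{-\pi i/4}$ from Proposition~\ref{prop:Maslov-composition}, which gives $\kappa^{-n}=e^{\frac{\pi i}{4}n}=e^{\frac{\pi i}{4}\sigma(\mathcal L_{v,w,\mathrm{reg}})}$. This factor is well defined mod $8$ because $e^{\pi i/4}$ has order $8$.

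For (iii), I check the two factors separately. The raw scalar $Z^{RT,\mathrm{raw}}_{\mathbb Z_k}(|M_{v,w}|)$ is a topological invariant by the Kirby-move argument of Section~\ref{sec:abelian-surgery}. For the signature factor, a handle slide acts by congruence $\mathcal L\mapsto A^\top\mathcal L A$ and so preserves $\sigma$. A $K_1$ move adds a $\pm1$ block, which changes $\sigma$ by $\pm1$. The signature factor is therefore not separately invariant, and the invariance must come from the product. I will argue that the product equals the Walker-normalized scalar $\kappa^{-n}Z^{RT,\mathrm{raw}}$. On the RT side this quantity is well defined by Theorem~\ref{thm:RT-boundary-TQFT}(ii), which also covers independence of the handlebody representatives via skein relations. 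Part (i) shows the signature tracks $n$, so the $K_1$ change in $\sigma$ is matched by a change of the $4$-manifold, and hence of $n$, in the same class.

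For (iv), I use Theorem~\ref{thm:closed-equivalence} to identify the raw scalars $Z^{RT,\mathrm{raw}}_{\mathbb Z_k}(|M_{v,w}|)=Z^{CS}_{U(1),k}(|M_{v,w}|)$. On the Chern--Simons side, \eqref{eq:extended-CS-state} with empty boundary gives $Z^{CS}_{(M,n)}=e^{\frac{\pi i}{4}n}Z^{CS}_M$, so the same phase $e^{\frac{\pi i}{4}n}$ multiplies both sides. This yields \eqref{eq:closure-extended-equality}, provided the Walker weight $n(M_{v,w})$ used by Manoliu coincides with Turaev's weight.

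The main obstacle is (i) together with this compatibility of weight conventions. It requires showing that Manoliu's Walker-type weight and Turaev's integer weight agree for handlebody closures, with matching sign and orientation conventions for the Maslov index. It also requires that the closed weight is indeed represented by the signature of the surgery $4$-manifold and not its negative. I would first test both conventions on $S^3$ presented as the $\pm1$-framed unknot: there the phase $e^{\pm\pi i/4}$ must cancel the phase of $A_\pm(k)$ in \eqref{eq:kirby-normalization}, and this fixes the sign.
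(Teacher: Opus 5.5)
Your outline follows the paper's argument almost step for step:
\begin{itemize}
\item for (i), Walker's identification $n\equiv\sigma(W)$ for the surgery trace together with $\sigma(W)=\sigma(\mathcal L_{v,w,\mathrm{reg}})$;
\item for (ii), substitution of $\kappa=e^{-\pi i/4}$;
\item for (iii), well-definedness of $\kappa^{-n}Z^{RT,\mathrm{raw}}_{\mathbb Z_k}$;
\item for (iv), the raw closed equality.
\end{itemize}
It has a genuine gap at (i) and (iii). The paper's proof has the same gap and does not resolve it.

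The weight $n(M_{v,w})$ is part of the extended structure. Your own additivity computation shows this: standard handlebodies have weight $0$, and the Maslov triples have the form $(\lambda,\lambda,L_X)$ and vanish. So $n(M_{v,w})$ is exactly $n(X)$, which can be any class mod $8$ and does not depend on a surgery presentation. There is no extra ``canonical weight'' term. Indeed $\sigma(W)$ is not an invariant of $\partial W$, which is precisely why weights are introduced.

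Now blow up by a $\pm1$-framed unknot. This changes $\sigma(\mathcal L_{v,w,\mathrm{reg}})$ by $\pm1$. It leaves unchanged $|M_{v,w}|$, $n(M_{v,w})$, and, as you note yourself, the Kirby-invariant scalar $Z^{RT,\mathrm{raw}}_{\mathbb Z_k}(|M_{v,w}|)$. Hence:
\begin{itemize}
\item (i) fails for at least one of any two presentations related by a blow-up;
\item the right-hand side of (ii) jumps by $e^{\pm\pi i/4}$ whenever the raw scalar is nonzero, so (iii) is false as stated.
\end{itemize}
Your proposed $S^3$ test exhibits this directly. The raw scalar is $k^{-1/2}$ for the empty link and for the $\pm1$-framed unknot alike, because the new factor $A_\pm(k)^{-1}$ cancels the new Gauss sum exactly. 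So there is no leftover phase from which to read off a sign. Meanwhile $e^{\frac{\pi i}{4}\sigma}k^{-1/2}$ takes the three values $k^{-1/2}$ and $e^{\pm\pi i/4}k^{-1/2}$.

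The step that breaks is ``the $K_1$ change in $\sigma$ is matched by a change of the $4$-manifold, and hence of $n$''. Changing the surgery presentation does not change the extended manifold $M_{v,w}$, so it cannot change $n$. You located the obstruction and then explained it away.

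Walker's ``$n=\sigma(W)$'' tells you which weight the surgery sum computes once the phase $e^{-\frac{\pi i}{4}\sigma(\mathcal L)}$ of the Kirby normalization is removed. That sum equals $e^{\frac{\pi i}{4}\sigma(\mathcal L)}Z^{RT,\mathrm{raw}}_{\mathbb Z_k}$, which is the extended invariant at weight $\sigma(W_{\mathcal L})$. It is not an identity for an independently prescribed weight.

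What can actually be proved:
\begin{itemize}
\item (i) and (ii) hold for presentations chosen with $\sigma(\mathcal L_{v,w})\equiv n(M_{v,w})\pmod 8$, which blow-ups always provide;
\item (iii) holds in the form that $e^{\frac{\pi i}{4}n(M_{v,w})}Z^{RT,\mathrm{raw}}_{\mathbb Z_k}(|M_{v,w}|)$ is independent of choices, which is then immediate.
\end{itemize}
Part (iv) needs none of this. As your last paragraph observes, both extended scalars are $e^{\frac{\pi i}{4}n}$ times the raw ones. So (iv) follows from Theorem~\ref{thm:closed-equivalence}, provided Manoliu's and Turaev's weight and Maslov conventions agree. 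You rightly flag that point; the paper takes it for granted.
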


\begin{proof}
Let \(W_{v,w}\) be the surgery trace \(4\)-manifold obtained from \(B^4\) by attaching \(2\)-handles along a framed link presenting the underlying closed manifold \(|M_{v,w}|\), and let \(\mathcal L_{v,w}\) denote the corresponding linking matrix. For the canonical handlebody closures, we use Walker's surgery framework of the extended weight, according to which the framing defect is measured by the signature of the surgery data; see \cite[p.~103]{Walker} (see also \cite[p.~35]{Walker}). Thus
\[
n(M_{v,w}) \equiv \sigma(W_{v,w}) \pmod 8.
\]

Recall that the intersection form of an oriented compact \(4\)-manifold \(W\) is the
symmetric bilinear pairing
\[
Q_W \colon H_2(W;\mathbb Z)\times H_2(W;\mathbb Z)\longrightarrow \mathbb Z,
\qquad
Q_W([S],[S'])=S\cdot S',
\]
defined by the algebraic intersection number of represented surfaces. In the present case,
the form \(Q_{W_{v,w}}\) is represented, in the natural basis given by the cores of the
attached \(2\)-handles, by the surgery linking matrix \(\mathcal L_{v,w}\); see
\cite[Prop.~4.5.11]{gompf1999}. If \(Q_{W_{v,w}}\) is degenerate, its radical is
\(\ker Q_{W_{v,w}}\), and the induced nondegenerate form on
\[
H_2(W_{v,w};\mathbb Z)/\ker Q_{W_{v,w}}
\]
is represented by the regular part \(\mathcal L_{v,w,\mathrm{reg}}\). Since the signature of a
possibly degenerate symmetric form is, by definition, the signature of its induced
nondegenerate quotient form, it follows that
\[
\sigma(W_{v,w})=\sigma(\mathcal L_{v,w,\mathrm{reg}}).
\]
Therefore
\[
n(M_{v,w}) \equiv \sigma(\mathcal L_{v,w,\mathrm{reg}}) \pmod 8,
\]
which proves \eqref{eq:walker-equals-signature}.

Note that both sides satisfy the same Maslov gluing law. Indeed, by the definition of the Walker weight in the extended bordism category,
\begin{equation}\label{eq:walker-cocycle}
n(M_2\circ M_1)
\equiv
n(M_2)+n(M_1)+\mu(M_1,M_2)
\pmod 8.
\end{equation}
Likewise, Wall's non-additivity theorem applied to the corresponding surgery trace \(4\)-manifolds yields
\begin{equation}\label{eq:signature-cocycle}
\sigma(\mathcal L_{21,\mathrm{reg}})
\equiv
\sigma(\mathcal L_{2,\mathrm{reg}})
+
\sigma(\mathcal L_{1,\mathrm{reg}})
+
\mu(M_1,M_2)
\pmod 8,
\end{equation}
see \cite{Wall1969,RanickiMaslov}. By definition of the Walker-normalized closed RT scalar, 
\[
Z^{RT}_{\mathbb Z_k}(M_{v,w})
=
\kappa^{-\,n(M_{v,w})}
\,Z^{RT,\mathrm{raw}}_{\mathbb Z_k}(|M_{v,w}|).
\]
Using \(\kappa=e^{-\pi i/4}\) from Proposition~\ref{prop:Maslov-composition} together with
\eqref{eq:walker-equals-signature}, we obtain
\[
\begin{aligned}
Z^{RT}_{\mathbb Z_k}(M_{v,w})
&= \left(e^{-\pi i/4}\right)^{-\,n(M_{v,w})}
   Z^{RT,\mathrm{raw}}_{\mathbb Z_k}(|M_{v,w}|) \\
&= e^{\frac{\pi i}{4}\sigma(\mathcal L_{v,w,\mathrm{reg}})}
   Z^{RT,\mathrm{raw}}_{\mathbb Z_k}(|M_{v,w}|),
\end{aligned}
\]
which proves \eqref{eq:walker-normalized-closure-formula}. Since the left-hand side depends only on the
closed extended manifold \(M_{v,w}\), the right-hand side is independent of both the chosen surgery
presentation and the chosen standard handlebody representatives of \(v\) and \(w\), proving \textup{(iii)}.

Finally, let \(Z^{CS,\mathrm{raw}}_{U(1),k}(|M_{v,w}|)\) denote the \(U(1)\) Chern--Simons scalar of the underlying closed manifold \(|M_{v,w}|\), before insertion of the extended Walker--Maslov weight. By the construction for closed extended manifolds,
\[
Z^{CS}_{U(1),k}(M_{v,w})
=
e^{\frac{\pi i}{4}n(M_{v,w})}
\,Z^{CS,\mathrm{raw}}_{U(1),k}(|M_{v,w}|).
\]
Using \eqref{eq:walker-equals-signature}, this becomes
\[
Z^{CS}_{U(1),k}(M_{v,w})
=
e^{\frac{\pi i}{4}\sigma(\mathcal L_{v,w,\mathrm{reg}})}
\,Z^{CS,\mathrm{raw}}_{U(1),k}(|M_{v,w}|).
\]
By the closed equivalence theorem of Section~\ref{Closed Comparison-Between-rank–one},
the raw closed invariants agree exactly:
\[
Z^{CS,\mathrm{raw}}_{U(1),k}(|M_{v,w}|)
=
Z^{RT,\mathrm{raw}}_{\mathbb Z_k}(|M_{v,w}|).
\]
We obtain
\[
Z^{CS}_{U(1),k}(M_{v,w})=
Z^{RT}_{Z_k}(M_{v,w}),
\]
this is precisely \eqref{eq:closure-extended-equality}.
\end{proof}
Fix an even level $k\in 2\mathbb Z_{>0}$. Let $G_k=\mathbb Z_k$ and define the quadratic function
\begin{equation}\label{eq:qk-def}
q_k:G_k\to U(1),
\qquad
q_k(x)=\exp\!\left(\frac{\pi i}{k}x^2\right),
\end{equation}
with associated bicharacter
\begin{equation}\label{eq:Omega-k-def}
\Omega_k(x,y)=\frac{q_k(x+y)}{q_k(x)\,q_k(y)}
=\exp\!\left(\frac{2\pi i}{k}xy\right).
\end{equation}
Let $\mathcal C_k:=\mathcal C(G_k,q_k)$ be the pointed modular category determined by
$(G_k,\Omega_k,q_k)$; it is modular because $\Omega_k$ is nondegenerate.

\medskip

\noindent\textbf{(A) The RT side.}
Let $\Cob^{\mathrm{ext}}_{2+1}$ denote Turaev's category of \emph{extended} bordisms \cite{Turaev1994}: objects are extended surfaces $(\Sigma,\lambda)$, where $\lambda\subset H_1(\Sigma;\mathbb R)$ is a Lagrangian subspace; morphisms are extended $3$--dimensional bordisms equipped with the appropriate additional data that controls the framing anomaly; composition is gluing with Maslov correction. Turaev's construction \cite{Turaev1994} yields a symmetric monoidal functor \begin{equation}\label{eq:RT-functor} Z^{\mathrm{RT}}_{\mathbb Z_{k}}:\Cob^{\mathrm{ext}}_{2+1}\longrightarrow \mathrm{Vect}_{\mathbb C}, \qquad (\Sigma,\lambda)\longmapsto \mathcal V^{\mathrm{RT}}_{k}(\Sigma,\lambda), \qquad M\longmapsto Z^{\mathrm{RT}}_{\mathbb Z_{k}}(M), \end{equation} whose value on a closed $3$--manifold presented by surgery on a framed link $L\subset S^3$ is computed by the Abelian surgery sum obtained from the RT evaluation \cite{Reshetikhin:1991,Mattes}. Here and below, $Z^{RT}_{\mathbb Z_{k}}$ denotes the extended RT functor in the notation of Section~\ref{Closed Comparison-Between-rank–one}.

\medskip

\noindent\textbf{(B) The Chern--Simons side.}
We have an extended TQFT for $U(1)$ Chern--Simons theory by geometric quantization with analytic torsion \cite{Manoliu1,Manoliu2}. In particular, to each extended surface $(\Sigma,L)$, where $L$ is the polarization, one assigns a finite-dimensional Hilbert space $\mathcal H_k(\Sigma,L)$, and to each extended bordism $X$ one assigns a linear map \begin{equation}\label{eq:CS-functor} Z^{\mathrm{CS}}_{U(1),k}:\Cob^{\mathrm{ext}}_{2+1}\longrightarrow \mathrm{Vect}_{\mathbb C}, \qquad (\Sigma,L)\longmapsto \mathcal H_k(\Sigma,L), \qquad X\longmapsto Z^{\mathrm{CS}}_{U(1),k}(X), \end{equation} characterized by the canonical vector \eqref{eq:extended-CS-state} and the BKS pairing (gluing formalism), with the standard Maslov--Kashiwara correction built into the extended structure.
\medskip

To compare \eqref{eq:RT-functor} and \eqref{eq:CS-functor}, we first identify the extended-surface data via the standard Poincaré-duality correspondence between Turaev’s homological Lagrangian $\lambda \subset H_1(\Sigma;\mathbb R)$ and the cohomological polarization $
L_\lambda \subset H^1(\Sigma;\mathbb R).$ Thus we write
\begin{equation}\label{eq:lambda-to-L}
(\Sigma,\lambda)\ \longleftrightarrow\ (\Sigma,L_\lambda),
\end{equation}
where \(L_\lambda\) denotes the rational Lagrangian determined by \(\lambda\) under Poincaré duality. This is the common polarization data for the classical boundary phase space, namely the symplectic torus of flat \(U(1)\)-connections on \(\Sigma\) at level \(k\). Under this identification, one proves that the RT and Chern--Simons state spaces are canonically isomorphic. In particular, for a connected surface of genus \(g\), both theories yield a vector space of dimension \(k^g\), and the canonical isomorphism identifies the RT handlebody basis with the Bohr--Sommerfeld basis.

\begin{lemma}\label{basis-independence}
Let $\qquad\lambda_{\mathbb Z}:=\lambda\cap H_1(\Sigma;\mathbb Z),
\qquad
\Pi_\lambda:=\operatorname{Hom}(\lambda_{\mathbb Z},\mathbb Z/k\mathbb Z)
\cong \lambda_{\mathbb Z}^\vee/k\lambda_{\mathbb Z}^\vee .
$ Then:

\begin{enumerate}[label=(\roman*)]
\item For every adapted integral symplectic basis $(a_1,\dots,a_g,b_1,\dots,b_g),$ \(\lambda=\langle a_1,\dots,a_g\rangle_{\mathbb R}\), evaluation on
\((a_1,\dots,a_g)\) identifies \(\Pi_\lambda\) with \((\mathbb Z/k\mathbb Z)^g\).

\item The RT handlebody basis of \(\mathcal V_k^{RT}(\Sigma,\lambda)\) is canonically indexed by
\(\Pi_\lambda\): if \(\alpha\in \Pi_\lambda\), then in any adapted basis the corresponding
basis vector is the handlebody skein obtained by coloring the \(i\)-th core curve by
\(\alpha(a_i)\in \mathbb Z/k\mathbb Z\).

\item Bohr–Sommerfeld basis of \(H_k(\Sigma,L_\lambda)\) is canonically indexed by
\(\Pi_\lambda\): if \(\alpha\in \Pi_\lambda\), then in any adapted basis the corresponding basis
vector is the unique covariantly constant section over the Bohr–Sommerfeld leaf labeled by
the coordinate vector \((\alpha(a_1),\dots,\alpha(a_g))\in (\mathbb Z/k\mathbb Z)^g\).

\item These two indexings are independent of the chosen adapted basis.

\item If \(f:\Sigma\to\Sigma\) preserves the extended structure \((\Sigma,\lambda)\), then
\(f_*(\lambda_{\mathbb Z})=\lambda_{\mathbb Z}\), hence \(f\) induces a permutation
\[
f_\#:\Pi_\lambda\to\Pi_\lambda,
\qquad
f_\#(\alpha)=\alpha\circ (f_*|_{\lambda_{\mathbb Z}})^{-1},
\]
and both the RT basis and the Bohr–Sommerfeld basis are equivariant for this action.
\end{enumerate}
\end{lemma}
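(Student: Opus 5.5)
The plan is to treat the five items in order, since each rests on the one before. For (i) I would use that \(\lambda_{\mathbb Z}=\lambda\cap H_1(\Sigma;\mathbb Z)\) is a primitive sublattice of rank \(g\). By definition, an adapted integral symplectic basis has \(a_1,\dots,a_g\) forming a \(\mathbb Z\)-basis of \(\lambda_{\mathbb Z}\); this is the one point where primitivity of the span is really used. Then \(\alpha\mapsto(\alpha(a_1),\dots,\alpha(a_g))\) is an isomorphism \(\operatorname{Hom}(\lambda_{\mathbb Z},\mathbb Z/k)\cong(\mathbb Z/k)^g\). The identification \(\operatorname{Hom}(\lambda_{\mathbb Z},\mathbb Z/k)\cong\lambda_{\mathbb Z}^\vee/k\lambda_{\mathbb Z}^\vee\) follows because \(\lambda_{\mathbb Z}\) is free.

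For (ii) I would generalize the genus-one skein computation of the preceding Proposition (that \(\mathcal V(T^2,\lambda)\cong\mathbb C[G]\)) to genus \(g\). Take the handlebody \(H\) with \(\ker(H_1(\Sigma)\to H_1(H))=\lambda\). Then \(H\) deformation retracts onto a wedge of circles dual to the meridian disks, and \(H_1(H;\mathbb Z)\cong H_1(\Sigma)/\lambda_{\mathbb Z}\cong\operatorname{Hom}(\lambda_{\mathbb Z},\mathbb Z)\) via intersection pairing. In the pointed category \(\mathcal C_k\), fusion collapses any colored ribbon graph in \(H\) to a single \(\mathbb Z_k\)-valued class. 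The reduction is therefore governed by a class in \(H_1(H;\mathbb Z/k)\cong\operatorname{Hom}(\lambda_{\mathbb Z},\mathbb Z/k)=\Pi_\lambda\), where the class sends \(a\in\lambda_{\mathbb Z}\) to the intersection of the colored graph with a disk bounded by \(a\). In an adapted basis the core curves are dual to \(a_i\), so coloring the \(i\)-th core by \(\alpha(a_i)\) realizes \(\alpha\).

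For (iii) I would use Manoliu's description of Bohr--Sommerfeld leaves. The polarization \(\mathcal P_{L_\lambda}\) has leaves given by the translates of the subtorus \(L_\lambda/L_\lambda\cap H^1(\Sigma;\mathbb Z)\). The leaf space is the torus \(H^1(\Sigma;\mathbb R)/(L_\lambda+H^1(\Sigma;\mathbb Z))\). Pairing with \(\lambda_{\mathbb Z}\) identifies this torus with \(\operatorname{Hom}(\lambda_{\mathbb Z},\mathbb R/\mathbb Z)\), because \(L_\lambda\) is the annihilator of \(\lambda\) under Poincaré duality. The Bohr--Sommerfeld condition for curvature \(k\omega_\Sigma\) selects precisely the \(k\)-torsion points, namely \(\operatorname{Hom}(\lambda_{\mathbb Z},\tfrac1k\mathbb Z/\mathbb Z)\cong\Pi_\lambda\). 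I expect this to be the main obstacle. One must check that the canonical Chern--Simons line bundle has no extra holonomy shift on the leaves, so that the Bohr--Sommerfeld set is exactly the torsion points rather than a translate of them. I would first try to settle this using evenness of \(k\) and the canonical trivialization of \(\mathcal L_\Sigma\) along \(L_\lambda\) from \cite{Freed:1991}. If a translate does appear, I would absorb it into a base point chosen canonically by that trivialization.

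Items (iv) and (v) then follow formally. Both indexings were defined intrinsically through \(\Pi_\lambda\): the RT index is the homology class in \(H_1(H;\mathbb Z/k)\) read against \(\lambda_{\mathbb Z}\), and the Bohr--Sommerfeld index is a torsion point of \(\operatorname{Hom}(\lambda_{\mathbb Z},\mathbb R/\mathbb Z)\). The adapted basis enters only to write coordinates. A change of adapted basis acts by some \(A\in GL_g(\mathbb Z)\) on \((a_i)\) and transforms the coordinates of \(\alpha\) accordingly, which gives (iv). For (v), a diffeomorphism \(f\) preserving \(\lambda\) preserves \(\lambda_{\mathbb Z}\) and extends over the handlebody up to isotopy, giving \(f_\#\). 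It carries the skein with class \(\alpha\) to the skein with class \(\alpha\circ(f_*)^{-1}\). On the geometric side, \(f\) acts on \(\mathcal M_\Sigma\) preserving \(\mathcal P_{L_\lambda}\) and the canonical line bundle by naturality, so it moves the Bohr--Sommerfeld leaf labeled \(\alpha\) to the leaf labeled \(f_\#\alpha\).
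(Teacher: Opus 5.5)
Your treatment of (i)--(iv) follows the paper's route. You evaluate on an adapted $\mathbb Z$-basis of $\lambda_{\mathbb Z}$, take cores dual to the meridians, label Bohr--Sommerfeld leaves by $\lambda_{\mathbb Z}^\vee/k\lambda_{\mathbb Z}^\vee$, and let adapted bases supply only coordinates. Your identification of the leaf space with $\operatorname{Hom}(\lambda_{\mathbb Z},\mathbb R/\mathbb Z)$ in (iii) is more explicit than the paper's bare assertion, and for even $k$ the shift you worry about does not occur.

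The genuine gap is in (v), in two places.

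\textbf{Extension over $H$.} The condition $f_*\lambda=\lambda$ is homological, whereas extending over $H$ requires $f$ to preserve the set of meridian curves. For $g\ge 2$ every Torelli element fixes $\lambda$, but the Torelli group is not contained in the handlebody group. So your reduction to a diffeomorphism of $H$ is not available in general, and $Z^{RT}_{\mathbb Z_k}(f)$ has to be computed from the mapping cylinder via the pairing construction.

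\textbf{Phases when $f$ does extend.} Even then, pushing the core skein forward preserves only its class in $H_1(H;\mathbb Z/k)$, not the skein vector. Take genus one and the Dehn twist $\tau_a$ about the meridian. It preserves $\lambda=\langle a\rangle$, induces $f_\#=\mathrm{id}$ on $\Pi_\lambda$, and extends over the solid torus. However, the extension adds a full twist to the framing of the core, so $Z^{RT}_{\mathbb Z_k}(\tau_a)$ is, up to a global constant, the $T$-matrix $e_x\mapsto q_k(x)^{\pm1}e_x$. Since $q_k(0)=1\neq e^{\pi i/k}=q_k(1)$, this is not $e_x\mapsto e_{f_\# x}$, even projectively.

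So the literal permutation-equivariance in (v) is false. What can be proved is monomial equivariance, $Z(f)e_\alpha=c_f(\alpha)\,e_{f_\#\alpha}$. The same holds on the Chern--Simons side: $f$ does carry the leaf labelled $\alpha$ to the leaf labelled $f_\#\alpha$, but its lift to $\mathcal L_\Sigma$ rescales the covariantly constant section by a leaf-dependent phase. The missing step is to compute both phase functions on the stabilizer of $\lambda$ and show $c^{RT}_f=c^{CS}_f$; for $\tau_a$ both should give the $T$-matrix. This is exactly what the later naturality of $\Phi_\Sigma$ needs.

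The same caveat undercuts your ``formal'' proof of (iv). The intrinsic label in $H_1(H;\mathbb Z/k)$ canonically determines the line $\mathbb C e_\alpha$, not the vector. You must therefore fix framed cores and check that changing the adapted basis does not rescale them.

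The paper's own arguments for (iv) and (v) rest on the same homological reasoning and an appeal to ``functoriality''. This is therefore a defect of the statement as written, not only of your route.
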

\begin{proof}
Part (i) is immediate: an adapted basis \((a_1,\dots,a_g)\) is a \(\mathbb Z\)-basis of
\(\lambda_{\mathbb Z}\), so evaluation on that basis identifies
\(\operatorname{Hom}(\lambda_{\mathbb Z},\mathbb Z/k\mathbb Z)\) with \((\mathbb Z/k\mathbb Z)^g\).

For part (ii), choose a handlebody \(H\) with \(\partial H=\Sigma\) and
\(\ker(H_1(\Sigma;\mathbb R)\to H_1(H;\mathbb R))=\lambda\), as in Turaev’s handlebody model.
If \((a_i,b_i)\) is adapted, then the \(a_i\) bound compressing disks in \(H\), and the dual core
curves \(\gamma_i\) form a basis of \(H_1(H;\mathbb Z)\) dual to \(a_1,\dots,a_g\).
For \(\alpha\in \Pi_\lambda\), define \(e_\alpha\) to be the RT basis vector represented by the
skein whose \(i\)-th core curve \(\gamma_i\) is colored by \(\alpha(a_i)\in \mathbb Z/k\mathbb Z\).
Since in the pointed category every simple object is invertible and fusion is addition in
\(\mathbb Z/k\mathbb Z\), this gives exactly the standard handlebody basis.

For part (iii), in the real-polarization quantization the state space \(H_k(\Sigma,L_\lambda)\) is the direct sum over Bohr–Sommerfeld leaves of one-dimensional spaces of covariantly constant sections. For the polarization determined by \(L_\lambda\), the Bohr–Sommerfeld set is canonically indexed by \(\lambda_{\mathbb Z}^\vee/k\lambda_{\mathbb Z}^\vee\cong \Pi_\lambda\). Thus for \(\alpha\in\Pi_\lambda\) there is a distinguished basis vector \(v_\alpha\) supported on the Bohr–Sommerfeld leaf labeled by \(\alpha\). In coordinates coming from any adapted basis, this is exactly the basis vector \(v(\Sigma,L_\lambda)_q\) with \(q=(\alpha(a_1),\dots,\alpha(a_g))\). It remains to prove the independence asserted in part (iv).

Let
\((a_1,\dots,a_g,b_1,\dots,b_g)\) and \((a'_1,\dots,a'_g,b'_1,\dots,b'_g)\)
be two adapted integral symplectic bases. Since both are adapted to the same Lagrangian
\(\lambda\), the change-of-basis matrix has the form
\[
\begin{pmatrix}
A & B\\
0 & (A^{-1})^{\!T}
\end{pmatrix}
\in Sp(2g,\mathbb Z),
\qquad A\in GL(g,\mathbb Z),
\]
with \(B\) symmetric, so
\(
a' = A a,\text{ and }b' = B a + (A^{-1})^{\!T} b.
\) Hence the dual basis of \(\lambda_{\mathbb Z}^\vee\) transforms by
\(
(a'^*) = (A^{-1})^{\!T}(a^*).
\) Therefore the coordinate vector of a fixed element \(\alpha\in\Pi_\lambda\) changes by the
same contragredient rule: $q'=(A^{-1})^{\!T}q.$

On the RT side, the core classes in \(H_1(H;\mathbb Z)\) dual to \(a_i\) transform by the same
rule, because the \(a\)-part vanishes in \(H_1(H;\mathbb Z)\). Thus the skein labeled by \(q\) in
the first adapted basis is exactly the skein labeled by \(q'=(A^{-1})^{\!T}q\) in the second.
So the abstract vector \(e_\alpha\) is independent of the adapted basis.

On the Chern–Simons side, the Bohr–Sommerfeld leaf labels are coordinates in
\(\lambda_{\mathbb Z}^\vee/k\lambda_{\mathbb Z}^\vee\), so changing adapted basis changes those
coordinates by the same matrix \((A^{-1})^{\!T}\). Hence the abstract vector \(v_\alpha\) is also
independent of the adapted basis.

This proves part (iv).

For part (v), if \(f:\Sigma\to\Sigma\) preserves the extended structure, then \(f_*\lambda=\lambda\) and hence \(f_*(\lambda_{\mathbb Z})=\lambda_{\mathbb Z}\). Therefore \(f\) acts on \(\Pi_\lambda\) by precomposition. By functoriality of Turaev’s handlebody construction, the RT mapping-cylinder action sends \(e_\alpha\) to \(e_{f_\#\alpha}\). This mapping-cylinder description is consistent with the Abelian/$\mathfrak{gl}_1$ operator formalism of Garoufalidis and Yu \cite{GY26}, where the $\mathfrak{gl}_1$ Reshetikhin–Turaev operator invariant of the mapping cylinder is identified with the relevant intertwiner. By functoriality of geometric quantization and the BKS construction, the Chern–Simons action sends \(v_\alpha\) to \(v_{f_\#\alpha}\). Thus both canonical bases are equivariant for the same permutation action of \(f\) on \(\Pi_\lambda\).
\end{proof}

\begin{theorem}
\label{thm:state-space-identification}
Let $(\Sigma,\lambda)$ be a connected extended surface of genus $g$, and let
$L_\lambda\subset H^1(\Sigma;\mathbb R)$ be the rational Lagrangian corresponding to
$\lambda\subset H_1(\Sigma;\mathbb R)$ under Poincar\'e duality. Then there exists a canonical vector
space isomorphism
\begin{equation}\label{eq:PhiSigma}
\Phi_\Sigma:\ \mathcal V^{\mathrm{RT}}_{k}(\Sigma,\lambda)\xrightarrow{\ \cong\ }\mathcal H_k(\Sigma,L_\lambda),
\end{equation}
which is natural with respect to diffeomorphisms preserving the extended structure and satisfies
\[
\dim \mathcal V^{\mathrm{RT}}_{k}(\Sigma,\lambda)=
\dim \mathcal H_k(\Sigma,L_\lambda)=k^{g}.
\]
Moreover, after choosing any integral symplectic basis $(a_1,\dots,a_g,b_1,\dots,b_g)$ of $H_1(\Sigma;\mathbb Z)$ adapted to $\lambda$, i.e. $\lambda=\langle a_1,\dots,a_g\rangle_{\mathbb R},$
the isomorphism $\Phi_\Sigma$ is characterized by
\begin{equation}\label{eq:Phi-basis}
\Phi_\Sigma(e_q)=v(\Sigma,L_\lambda)_q,
\qquad q\in(\mathbb Z/k\mathbb Z)^g,
\end{equation}
Let
\[
\Phi_\Sigma^\vee := ((\Phi_\Sigma^{-1})^*) :
\bigl(V^{RT}_k(\Sigma,\lambda)\bigr)^*
\;\xrightarrow{\ \sim\ }\;
H_k(\Sigma,L_\lambda)^*
\]
denote the induced dual isomorphism,
where $\{e_q\}$ is the standard RT handlebody basis and
$\{v(\Sigma,L_\lambda)_q\}$ is the Bohr--Sommerfeld basis. Finally, $\Phi_\Sigma$ identifies the canonical cylinder kernels, with the second tensor factor transported by
the induced dual map:
\begin{equation}\label{iso-map}
(\Phi_\Sigma\otimes(\Phi_\Sigma^{-1})^*)
\Bigl(
Z_{\mathbb Z_k}^{RT}(\Sigma\times I,\lambda\oplus\lambda,0)
\Bigr)
=
Z_{U(1),k}^{CS}(\Sigma\times I,L_\lambda\oplus L_\lambda,0).
\end{equation}
Equivalently, \(\Phi_\Sigma\) identifies the canonical boundary pairings on the RT and
Chern–Simons sides.
\end{theorem}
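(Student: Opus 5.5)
The plan is to build \(\Phi_\Sigma\) from the common index set \(\Pi_\lambda\) of Lemma~\ref{basis-independence}, and then to reduce the cylinder identity \eqref{iso-map} to an equality of closed extended scalars, which Theorem~\ref{thm:canonical-weighted-closure}(iv) provides.

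\emph{Step 1: definition and naturality.} By Lemma~\ref{basis-independence}(ii)--(iii), both \(\mathcal V^{\mathrm{RT}}_k(\Sigma,\lambda)\) and \(\mathcal H_k(\Sigma,L_\lambda)\) have distinguished bases \(\{e_\alpha\}\) and \(\{v_\alpha\}\) indexed by \(\Pi_\lambda\). I define \(\Phi_\Sigma(e_\alpha):=v_\alpha\).
\begin{itemize}
\item By part (iv) of the lemma, this definition does not depend on the adapted basis.
\item In an adapted basis it reduces to \eqref{eq:Phi-basis}.
\item By part (i), \(\Pi_\lambda\cong(\mathbb Z/k\mathbb Z)^g\), which gives the dimension count \(k^g\) on both sides.
\item Naturality under extended diffeomorphisms follows from part (v): both bases are equivariant for the same permutation \(f_\#\), so \(\Phi_\Sigma\) intertwines the two mapping-cylinder actions.
\end{itemize}

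\emph{Step 2: reduction of the cylinder kernel to matrix elements.} Each side's cylinder kernel is an element of \(V\otimes V^*\), and it is determined by its matrix elements against basis vectors and basis covectors. On the RT side, the pairing rule of Theorem~\ref{thm:Abelian-pointed-extended-TQFT}(iii) gives
\[
\langle e_\beta^{\vee}, Z^{RT}_{\mathbb Z_k}(\Sigma\times I,\lambda\oplus\lambda,0)\,e_\alpha\rangle = Z^{RT}_{\mathbb Z_k}(M_{\alpha,\beta}),
\]
where \(M_{\alpha,\beta}\) is the closed extended manifold obtained by capping \(\Sigma\times I\) with the two standard handlebodies. On the Chern--Simons side I would prove the analogous identity: the BKS pairing of the Bohr--Sommerfeld states with the cylinder state equals the Chern--Simons invariant of the same closure. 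This uses the gluing axiom of Manoliu's theorem (Theorem VI.11 in Section~\ref{sec:1}) together with the cylinder axiom.

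\emph{Step 3: comparison of closed scalars.} Theorem~\ref{thm:canonical-weighted-closure}(iv) says \(Z^{RT}_{\mathbb Z_k}(M_{\alpha,\beta})=Z^{CS}_{U(1),k}(M_{\alpha,\beta})\). Hence the two cylinder kernels have equal matrix elements once the bases are matched by \(\Phi_\Sigma\) and the covectors by \((\Phi_\Sigma^{-1})^*\). This is \eqref{iso-map}. The statement about boundary pairings is then a reformulation, because the cylinder kernel is exactly the copairing dual to the boundary pairing.

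\emph{Main obstacle.} The delicate point is Step 2 on the Chern--Simons side. One must check that the covectors given by the outgoing handlebodies correspond under \((\Phi_\Sigma^{-1})^*\) to the CS covectors produced by the opposite solid handlebody states. On the RT side these are Fourier transforms by \(\Omega_k\), as in Proposition~\ref{prop:torus-pairing}.

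My approach is to compute the CS handlebody state explicitly. The Lagrangian of the handlebody is transverse to \(L_\lambda\), so the BKS operator \(F_{L_\lambda L_H}\) is a discrete Fourier transform with kernel \(\exp(2\pi i\langle\alpha,\beta\rangle/k)\). I would then compare this with \(\Omega_k\) factor by factor, one genus at a time, working in an adapted basis. I would first settle the normalization constants for \(g=1\), where the closed manifolds involved are \(S^3\) and lens spaces. After that, multiplicativity under disjoint union and connected sum extends the result to arbitrary \(g\).
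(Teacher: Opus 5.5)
Your Step~1 matches the paper's proof. You set $\Phi_\Sigma(e_\alpha):=v_\alpha$ over the common label set $\Pi_\lambda$, and read off the count $k^g$, the adapted-basis formula \eqref{eq:Phi-basis} and naturality from Lemma~\ref{basis-independence}. The gap is in Steps~2--3.

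The paper obtains \eqref{iso-map} with no closed-manifold input at all. By the cylinder axiom of each theory, the two cylinder kernels are the identity elements $\sum_\alpha e_\alpha\otimes e_\alpha^{*}$ and $\sum_\alpha v_\alpha\otimes v_\alpha^{*}$, with $e_\alpha^{*}$ and $v_\alpha^{*}$ the dual bases. Since $v_\alpha=\Phi_\Sigma(e_\alpha)$, the map $(\Phi_\Sigma^{-1})^*$ sends $e_\alpha^{*}$ to $v_\alpha^{*}$, so $\Phi_\Sigma\otimes(\Phi_\Sigma^{-1})^*$ carries one kernel to the other; this holds for any linear isomorphism. You invoke the cylinder axiom yourself in Step~2. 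Once you do, both matrix entries equal $\langle e^\vee_\beta,e_\alpha\rangle$ and the proof is over.

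The route you actually propose cannot be carried out as described, for these reasons:
\begin{itemize}
\item The Chern--Simons half of Step~2 requires reading $v_\alpha$ and $(\Phi_\Sigma^{-1})^*e^\vee_\beta$ as states of handlebodies carrying labels $\alpha,\beta$. The theory of Section~\ref{sec:1} has no Wilson lines, so labelled handlebodies have no Chern--Simons state there.
\item The bare cap $H_\lambda$ has $L_{H_\lambda}=\operatorname{Im}\bigl(H^1(H_\lambda;\mathbb R)\to H^1(\Sigma;\mathbb R)\bigr)=\operatorname{Ann}(\lambda)=L_\lambda$. So it is not transverse to $L_\lambda$, contrary to what you claim.
\item The state of $H_\lambda$ is supported on the single Bohr--Sommerfeld leaf $\Lambda_{H_\lambda}$ through the trivial connection. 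Your BKS/Fourier computation therefore yields one vector, not the family $\{v_\alpha\}$.
\item Consequently ``$Z^{CS}_{U(1),k}(M_{\alpha,\beta})$'' has no meaning for $\alpha\neq 0$ independent of the identification $\Phi_\Sigma$ you are trying to construct.
\item In the paper, the closure formula for Chern--Simons matrix entries is Lemma~\ref{canonical-basis}, which is deduced \emph{from} this theorem. Borrowing it here would be circular.
\item Step~3 relies on Theorem~\ref{thm:canonical-weighted-closure}(iv). Both sides there are expressed through the underlying manifold $|M_{v,w}|$ and its weight, via the closed comparison with $L'=\varnothing$. That result does not see the colours on the cores, so it cannot produce label-dependent matrix entries.
\item With both caps of type $\lambda$, the genus-one closure is $S^1\times S^2$, not $S^3$ or a lens space.
\end{itemize}

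Replace Steps~2--3 with the cylinder-axiom argument above.
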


\begin{proof}
Let
\[
\lambda_{\mathbb Z}:=\lambda\cap H_1(\Sigma;\mathbb Z),
\qquad
\Pi_\lambda:=\operatorname{Hom}(\lambda_{\mathbb Z},\mathbb Z/k\mathbb Z).
\]
By Lemma \ref{basis-independence}(i), both the RT state space \(\mathcal V_k^{RT}(\Sigma,\lambda)\) and the Chern–Simons state
space \(H_k(\Sigma,L_\lambda)\) carry canonical bases indexed by the same finite set \(\Pi_\lambda\):
\[
\{e_\alpha\}_{\alpha\in\Pi_\lambda}
\subset \mathcal V_k^{RT}(\Sigma,\lambda),
\qquad
\{v_\alpha\}_{\alpha\in\Pi_\lambda}
\subset H_k(\Sigma,L_\lambda).
\]
Moreover, these bases are independent of every auxiliary adapted basis. We therefore define
$\Phi_\Sigma(e_\alpha):=v_\alpha,
$ for $ \alpha\in\Pi_\lambda.$ Since \(|\Pi_\lambda|=k^g\), this is a well-defined vector space isomorphism
\[
\Phi_\Sigma:\mathcal V_k^{RT}(\Sigma,\lambda)\xrightarrow{\ \sim\ } H_k(\Sigma,L_\lambda),
\]
and
\[
\dim \mathcal V_k^{RT}(\Sigma,\lambda)=\dim H_k(\Sigma,L_\lambda)=k^g.
\]
If one now chooses any adapted integral symplectic basis
\((a_1,\dots,a_g,b_1,\dots,b_g)\), then \(\Pi_\lambda\cong(\mathbb Z/k\mathbb Z)^g\) by
\(\alpha\mapsto (\alpha(a_1),\dots,\alpha(a_g))\), and the definition above becomes exactly
\[
\Phi_\Sigma(e_q)=v(\Sigma,L_\lambda)_q,
\qquad q\in(\mathbb Z/k\mathbb Z)^g,
\]
which is \eqref{eq:Phi-basis}. We next check compatibility with the cylinder kernels. On the RT side, the cylinder kernel is
\[
Z_{\mathbb Z_k}^{RT}(\Sigma\times I,\lambda\oplus\lambda,0)
=
\sum_{\alpha\in\Pi_\lambda} e_\alpha\otimes e_\alpha^\vee,
\]
where \(e_\alpha^\vee\) denotes the canonical covector dual to \(e_\alpha\) under the RT boundary
pairing. On the Chern–Simons side, the cylinder kernel is
\[
Z_{U(1),k}^{CS}(\Sigma\times I,L_\lambda\oplus L_\lambda,0)
=
\sum_{\alpha\in\Pi_\lambda} v_\alpha\otimes v_\alpha^*,
\]
where \(v_\alpha^*\) is the canonical covector dual to \(v_\alpha\) under the BKS pairing.
Therefore
\[
(\Phi_\Sigma\otimes (\Phi_\Sigma^{-1})^*)
\Bigl(
Z_{\mathbb Z_k}^{RT}(\Sigma\times I,\lambda\oplus\lambda,0)
\Bigr)
=
Z_{U(1),k}^{CS}(\Sigma\times I,L_\lambda\oplus L_\lambda,0).
\]
Equivalently, \(\Phi_\Sigma\) identifies the canonical RT pairing with canonical pairing,
which is the content of \eqref{iso-map}. Finally, let \(f:\Sigma\to\Sigma\) be a diffeomorphism preserving the extended structure.
By Lemma \ref{basis-independence}{(v)}, \(f\) acts on \(\Pi_\lambda\), and both functorial theories act on their canonical
bases by the same permutation:
\[
Z_{\mathbb Z_k}^{RT}(f)(e_\alpha)=e_{f_\#\alpha},
\qquad
Z_{U(1),k}^{CS}(f)(v_\alpha)=v_{f_\#\alpha}.
\]
Hence
\[
\Phi_\Sigma\bigl(Z_{\mathbb Z_k}^{RT}(f)e_\alpha\bigr)
=
\Phi_\Sigma(e_{f_\#\alpha})
=
v_{f_\#\alpha}
=
Z_{U(1),k}^{CS}(f)(v_\alpha)
=
Z_{U(1),k}^{CS}(f)\Phi_\Sigma(e_\alpha)
\]
for every \(\alpha\in\Pi_\lambda\). Since the \(e_\alpha\) form a basis, we conclude that
\[
\Phi_\Sigma\circ Z_{\mathbb Z_k}^{RT}(f)=Z_{U(1),k}^{CS}(f)\circ \Phi_\Sigma.
\]
Thus \(\Phi_\Sigma\) is canonical and natural with respect to diffeomorphisms preserving the extended
structure. This proves \eqref{eq:PhiSigma}, \eqref{eq:Phi-basis}, and the pairing compatibility asserted in \eqref{iso-map}.
\end{proof}

For a disconnected extended surface $(\Sigma,\lambda)=\bigsqcup_{j=1}^{r}(\Sigma_j,\lambda_j),$
we extend \eqref{eq:PhiSigma} by symmetric monoidality:
\[
\Phi_\Sigma:=\bigotimes_{j=1}^{r}\Phi_{\Sigma_j},
\qquad
\Phi_\varnothing=\mathrm{id}_{\mathbb C}.
\]
Thus \(\Phi_\Sigma\) is defined for all objects of \(\Cob^{\mathrm{ext}}_{2+1}\).

\begin{lemma}\label{canonical-basis}
For each connected extended surface \((\Sigma,\lambda)\), fix once and for all a standard handlebody
\(H_\lambda\) with \(\partial H_\lambda=\Sigma\) and
\[
\ker\!\bigl(H_1(\Sigma;\mathbb R)\to H_1(H_\lambda;\mathbb R)\bigr)=\lambda .
\]
Let \(\Pi_\lambda\) be the canonical polarization label set of Lemma \ref{basis-independence}.

Let $\{e_\alpha\}_{\alpha\in\Pi_\lambda}\subset \mathcal V_k^{RT}(\Sigma,\lambda)$ be the canonical RT basis, and let $\{v_\alpha\}_{\alpha\in\Pi_\lambda}\subset H_k(\Sigma,L_\lambda)$ be the canonical Chern--Simons basis defined by $v_\alpha=\Phi_\Sigma(e_\alpha).$
Let \(e_\alpha^\vee\) and \(v_\alpha^*\) denote the corresponding canonical dual covectors under the RT
and Chern--Simons boundary pairings.

Now let
\[
X:(\Sigma_{in},\lambda_{in})\to(\Sigma_{out},\lambda_{out})
\]
be an extended bordism. For \(\alpha\in\Pi_{\lambda_{in}}\) and \(\beta\in\Pi_{\lambda_{out}}\), let
\[
M_{\alpha,\beta}:=
H^{out}_\beta\cup_{\Sigma_{out}} X \cup_{\Sigma_{in}} H^{in}_\alpha
\]
denote the closed extended 3-manifold obtained by gluing to \(X\) the fixed standard incoming and
outgoing handlebodies carrying the labels \(\alpha\) and \(\beta\).

Then:
\[
\langle e_\beta^\vee, Z_{\mathbb Z_k}^{RT}(X)(e_\alpha)\rangle
=
Z_{\mathbb Z_k}^{RT}(M_{\alpha,\beta}),
\]
and
\[
\langle v_\beta^*, Z_{U(1),k}^{CS}(X)(v_\alpha)\rangle
=
Z_{U(1),k}^{CS}(M_{\alpha,\beta}).
\]

In particular, after identifying the canonical bases by \(\Phi_\Sigma\), the matrix coefficients of the RT
and Chern--Simons operators are computed from the same canonically indexed family of closed extended
manifolds \(\{M_{\alpha,\beta}\}\).
\end{lemma}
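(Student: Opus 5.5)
The two displayed identities are really the same statement read in two functors. In each case we will combine three facts: functoriality under gluing, the way the standard handlebodies represent the basis vectors, and the definition of the canonical dual covectors through the boundary pairing.

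On the RT side we start from Turaev's pairing definition recalled in the proof of Theorem~\ref{thm:RT-boundary-TQFT}, namely $\langle w,Z_{\mathcal C}(M)(v)\rangle:=Z_{\mathcal C}(M_{v,w})$. By Lemma~\ref{basis-independence}(ii), the vector $e_\alpha$ is the skein in the fixed handlebody $H_{\lambda_{in}}$ whose core curves are coloured by $\alpha$. We take $H^{in}_\alpha$ to be exactly this coloured handlebody, viewed as an extended bordism $\varnothing\to(\Sigma_{in},\lambda_{in})$ of weight $0$, so that $Z^{RT}_{\mathbb Z_k}(H^{in}_\alpha)=e_\alpha$. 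Next we take $H^{out}_\beta$ to be the oppositely oriented standard handlebody with cores coloured by $\beta$, viewed as a bordism $(\Sigma_{out},\lambda_{out})\to\varnothing$. The point to check is that $Z^{RT}_{\mathbb Z_k}(H^{out}_\beta)$ equals the covector $e_\beta^\vee$ that is dual to $e_\beta$ under the RT boundary pairing. This is precisely the pairing used to define the cylinder kernel in Theorem~\ref{thm:state-space-identification}, where the cylinder kernel is $\sum_\alpha e_\alpha\otimes e_\alpha^\vee$. So we \emph{define} $e_\beta^\vee$ as the value of the standard outgoing handlebody, which is consistent with Proposition~\ref{prop:torus-pairing}, where the covectors $e_x^\vee$ are represented by the reversed solid tori. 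Strict functoriality of the Maslov-corrected functor (Theorem~\ref{thm:RT-boundary-TQFT}(iii)) then gives
\[
Z^{RT}_{\mathbb Z_k}(M_{\alpha,\beta})
=Z^{RT}_{\mathbb Z_k}(H^{out}_\beta)\circ Z^{RT}_{\mathbb Z_k}(X)\circ Z^{RT}_{\mathbb Z_k}(H^{in}_\alpha)
=\langle e_\beta^\vee,Z^{RT}_{\mathbb Z_k}(X)(e_\alpha)\rangle .
\]
Here the weight of $M_{\alpha,\beta}$ is, by definition, the weight produced by this composition, including the Maslov terms.

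On the Chern--Simons side we run the same argument. By Manoliu's theorem the CS theory is a strict symmetric monoidal functor on extended bordisms, so the gluing axiom, item (5), applies. What remains is to identify $Z^{CS}_{U(1),k}(H^{in}_\alpha)=v_\alpha$ and $Z^{CS}_{U(1),k}(H^{out}_\beta)=v_\beta^*$. Since $v_\alpha:=\Phi_\Sigma(e_\alpha)$, this is the step we expect to be the main obstacle. A coloured handlebody is not a bare bordism on the CS side; it is a handlebody with Wilson lines. Our plan is to interpret the colouring $\alpha$ as a choice of holonomy sector: the state of a handlebody in the polarization $L_{\lambda}$ is the Bohr--Sommerfeld vector supported on the leaf labelled by $\alpha$, via Lemma~\ref{basis-independence}(iii).

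To justify this, we first compute the state $Z^{CS}_{H}$ from \eqref{eq:ZX-CS-merged}. Because $L_H=L_\lambda$, no BKS transform is needed. Moreover $\Tors H^2(H;\mathbb Z)=0$, so the state reduces to the torsion half-density supported on the leaf $\Lambda_H$, which is the leaf with label $0$. Inserting Wilson lines labelled by $\alpha$ shifts this leaf to the one labelled by $\alpha$. Normalizing the resulting vector to $v_\alpha$ is exactly the convention fixed by $v_\alpha=\Phi_\Sigma(e_\alpha)$.

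For the covector, we use the cylinder compatibility \eqref{iso-map}. It gives $\Phi^\vee_\Sigma(e_\beta^\vee)=v_\beta^*$, so the outgoing handlebody state equals $v_\beta^*$. Once these identifications are in place, gluing yields the second identity. The final sentence of the lemma then follows immediately: both families of matrix coefficients are evaluations, in the respective functors, of the same family of closed extended manifolds $M_{\alpha,\beta}$.
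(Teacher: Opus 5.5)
\textbf{The Chern--Simons half has a genuine gap.} Your RT half is the paper's argument: Turaev's pairing definition, or equivalently strict functoriality applied to $H^{out}_\beta\circ X\circ H^{in}_\alpha$. This works provided $e^\vee_\beta$ means the covector of the standard outgoing handlebody, whose meridians span $\lambda_{out}$. That is not the covector of Proposition~\ref{prop:torus-pairing}, which uses the meridian--longitude exchange and equals $\sum_y\Omega_k(\beta,y)\varepsilon_y$, so that citation does not support your duality claim.

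The gap sits exactly at the step you flag: $Z^{CS}_{U(1),k}(H^{in}_\alpha)=v_\alpha$ and $Z^{CS}_{U(1),k}(H^{out}_\beta)=v^*_\beta$. Manoliu's functor, as reviewed in Section~\ref{sec:1}, is defined only on bare extended $3$-manifolds and has no Wilson lines. It therefore assigns nothing to a coloured handlebody, and $Z^{CS}_{U(1),k}(M_{\alpha,\beta})$ has no meaning for nonzero labels. Your computation for the bare handlebody is right: the state \eqref{eq:ZX-CS-merged} lives on the leaf $\Lambda_H$, which has label $0$. But ``inserting Wilson lines shifts the leaf'' is the physical expectation, not a step available in this framework. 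To use it you would have to extend Manoliu's construction to bordisms containing charged curves (flat connections on the complement with meridional holonomies $e^{2\pi i\alpha(a_i)/k}$, together with the corresponding Chern--Simons sections and torsion half-densities). You would then have to re-prove the gluing law in that setting.

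\textbf{The normalization is not yours to choose}, because $v_\alpha$ and $\Phi_\Sigma$ were already fixed in Theorem~\ref{thm:state-space-identification}. For a genus-$g$ handlebody, $m_H=(g-1)/4$, so $Z^{CS}_H=k^{(g-1)/4}\,\sigma_{H,0}\otimes\mu_H$. The incoming and outgoing handlebody states then pair to $Z^{CS}_{U(1),k}(\#^g(S^2\times S^1))=k^{(g-1)/2}$. The surgery formula \eqref{eq:MPR-with-insertions} applied to the $0$-framed unlink gives the same value on the RT side. So for $g\ge 2$, on either side, the outgoing handlebody state is not the dual covector of the incoming one. In particular, your identification of the handlebody covector with the $e^\vee_\beta$ appearing in the cylinder kernel $\sum_\alpha e_\alpha\otimes e^\vee_\alpha$ fails.

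Your covector step is also a non sequitur. \eqref{iso-map} records how $\Phi_\Sigma$ transports dual bases; it says nothing about what the Chern--Simons functor assigns to $H^{out}_\beta$. To get that you must compute it, e.g.\ via $Z(-X)=\overline{Z(X)}$ and the Hermitian pairing, which brings back both problems above.

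For comparison, the paper's proof does not close this gap either. After Theorem~\ref{thm:state-space-identification} it simply states that ``the gluing axiom then implies'' the second identity, tacitly treating $v_\alpha$ and $v^*_\beta$ as the Chern--Simons states of the labelled handlebodies. You have located the crux correctly, but your proposal assumes it rather than proving it.
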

\begin{proof}
The first identity is exactly Turaev's pairing construction for extended RT theory: for a bordism
\(X:(\Sigma_{in},\lambda_{in})\to(\Sigma_{out},\lambda_{out})\), the matrix coefficient of
\(Z_{\mathbb Z_k}^{RT}(X)\) against a chosen incoming state and outgoing covector is, by definition, the RT invariant
of the closed extended 3-manifold obtained by gluing the corresponding handlebodies to \(X\); see Theorem \ref{thm:RT-boundary-TQFT} and Theorem \ref{thm:Abelian-pointed-extended-TQFT}(iii).  Applied to the canonical basis vector \(e_\alpha\) and canonical dual
covector \(e_\beta^\vee\), this gives
\[
\langle e_\beta^\vee, Z_{\mathbb Z_k}^{RT}(X)(e_\alpha)\rangle
=
Z_{\mathbb Z_k}^{RT}(M_{\alpha,\beta}).
\]

For the Chern--Simons side, Theorem \ref{thm:state-space-identification} identifies the canonical RT basis and canonical
Chern--Simons basis by the same label set \(\Pi_\lambda\), and also identifies the canonical RT and
Chern--Simons boundary pairings. Therefore the basis vector \(v_\alpha\) and dual covector \(v_\beta^*\)
are the Chern--Simons boundary data corresponding to the same canonical incoming and outgoing labels
\(\alpha,\beta\). The gluing axiom then implies that the corresponding matrix coefficient of
\(Z_{U(1),k}^{CS}(X)\) is the closed Chern--Simons invariant of the extended 3-manifold obtained by
gluing to \(X\) the same fixed standard incoming and outgoing handlebodies with those labels. By
definition, that closed extended manifold is \(M_{\alpha,\beta}\). Hence
\[
\langle v_\beta^*, Z_{U(1),k}^{CS}(X)(v_\alpha)\rangle
=
Z_{U(1),k}^{CS}(M_{\alpha,\beta}).
\]
The final statement follows immediately.
\end{proof}

\begin{theorem}
\label{thm:boundary-equivalence-final}
Fix \(k\in 2\mathbb Z_{>0}\), and let
\[
\mathcal C_k=\mathcal C(\mathbb Z_k,q_k),
\qquad
q_k(x)=e^{\pi i x^2/k},
\qquad
\Omega_k(x,y)=e^{2\pi ixy/k}.
\]
Let $X:(\Sigma_{\mathrm{in}},\lambda_{\mathrm{in}})
\to
(\Sigma_{\mathrm{out}},\lambda_{\mathrm{out}})$ be an extended bordism in Turaev's sense, and let
\[
Z^{RT}_{\mathbb Z_k}(X)\in
\mathrm{Hom}\!\bigl(
\mathcal V^{RT}_{k}(\Sigma_{\mathrm{in}},\lambda_{\mathrm{in}}),
\mathcal V^{RT}_{k}(\Sigma_{\mathrm{out}},\lambda_{\mathrm{out}})
\bigr)
\]
denote the Walker--normalized extended RT operator. Interpret the same extended data as an \(e\)-\(3\)-manifold
\((X,L,n)\), with associated operator
\[
Z^{CS}_{U(1),k}(X)\in
\mathrm{Hom}\!\bigl(
\mathcal H_k(\Sigma_{\mathrm{in}},L_{\mathrm{in}}),
\mathcal H_k(\Sigma_{\mathrm{out}},L_{\mathrm{out}})
\bigr).
\]
Remember the canonical identification of boundary state spaces established earlier in Theorem \ref{thm:state-space-identification}:
$$
\Phi_\Sigma:
\mathcal V^{RT}_{k}(\Sigma,\lambda)
\xrightarrow{\;\cong\;}
\mathcal H_k(\Sigma,L_\lambda).
$$
Then
\[
\Phi_{\Sigma_{\mathrm{out}}}\circ Z^{RT}_{\mathbb Z_k}(X)
=
Z^{CS}_{U(1),k}(X)\circ \Phi_{\Sigma_{\mathrm{in}}}.
\]
Equivalently, after identifying the boundary state spaces via \(\Phi_\Sigma\), one has
\[
Z^{RT}_{\mathbb Z_k}(X)=Z^{CS}_{U(1),k}(X).
\]
\end{theorem}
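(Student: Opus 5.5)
The argument reduces the operator identity to an equality of matrix coefficients in the canonical bases. Both sides are linear maps between finite-dimensional spaces, and by Theorem~\ref{thm:state-space-identification} the map $\Phi_\Sigma$ carries the RT handlebody basis $\{e_\alpha\}_{\alpha\in\Pi_\lambda}$ to the Bohr--Sommerfeld basis $\{v_\alpha\}$. It also intertwines the canonical boundary pairings, via the cylinder-kernel identity \eqref{iso-map}, so that $(\Phi_\Sigma^{-1})^*e_\beta^\vee=v_\beta^*$. It therefore suffices to show that
\[
\langle v_\beta^*, Z^{CS}_{U(1),k}(X)(v_\alpha)\rangle=\langle e_\beta^\vee, Z^{RT}_{\mathbb Z_k}(X)(e_\alpha)\rangle
\]
for all $\alpha\in\Pi_{\lambda_{\mathrm{in}}}$ and $\beta\in\Pi_{\lambda_{\mathrm{out}}}$. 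Because $\{e_\beta^\vee\}$ is a basis of the dual space (the Hopf-link pairing of Proposition~\ref{prop:torus-pairing} is nondegenerate), this determines both composites $\Phi_{\Sigma_{\mathrm{out}}}\circ Z^{RT}_{\mathbb Z_k}(X)$ and $Z^{CS}_{U(1),k}(X)\circ\Phi_{\Sigma_{\mathrm{in}}}$. For disconnected boundaries, I first use monoidality of both functors and the tensor-product definition of $\Phi_\Sigma$ to reduce to connected $\Sigma_{\mathrm{in}}$ and $\Sigma_{\mathrm{out}}$, or simply run the argument with multi-labels.

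Next I apply Lemma~\ref{canonical-basis}, which expresses each matrix coefficient as a closed extended invariant of the same manifold:
\[
\langle e_\beta^\vee, Z^{RT}_{\mathbb Z_k}(X)(e_\alpha)\rangle=Z^{RT}_{\mathbb Z_k}(M_{\alpha,\beta}),\qquad
\langle v_\beta^*, Z^{CS}_{U(1),k}(X)(v_\alpha)\rangle=Z^{CS}_{U(1),k}(M_{\alpha,\beta}).
\]
Here $M_{\alpha,\beta}$ is the canonical handlebody closure carrying the labels $\alpha$ and $\beta$. It is exactly a closure of the kind considered in Theorem~\ref{thm:canonical-weighted-closure}, with $v=e_\alpha$ and $w=e_\beta^\vee$. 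Part (iv) of that theorem gives $Z^{RT}_{\mathbb Z_k}(M_{\alpha,\beta})=Z^{CS}_{U(1),k}(M_{\alpha,\beta})$, and hence the matrix coefficients agree.

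The main obstacle is that the closures carry colored cores, which act as Wilson lines. Theorem~\ref{thm:closed-equivalence} was stated only for $L'=\varnothing$, so I must make sure that part (iv) of Theorem~\ref{thm:canonical-weighted-closure} legitimately covers labelled handlebodies. I would handle this in two steps.

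First, I would observe that on the RT side a colored core is equivalent to an extra surgery component. Concretely, the Wilson-line sum \eqref{eq:MPR-with-insertions} with charge $h$ is obtained by completing the square in $g$. On the CS side, the corresponding $v_\alpha$ is a Bohr--Sommerfeld leaf, which shifts the torsion sector by the class determined by $\alpha$. I would then repeat the reciprocity argument of Theorem~\ref{thm:closed-equivalence} with the linear term $2\langle g,\mathcal L_{LL'}h\rangle$, using the shifted Gauss sum. The $k$-power bookkeeping of Proposition~\ref{prop:power-match} and the torsion integral of Theorem~\ref{thm:torsion-integral} are unchanged by the shift, so the only new input is matching the shifted quadratic refinement $q_{\mathcal L}(x+\mathcal L^{-1}\mathcal L_{LL'}h)$ with the CS phase of the labelled sector.

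Second, I would check that the Maslov/Walker weights of the glued pieces add consistently on both sides, using \eqref{eq:walker-cocycle} and \eqref{eq:signature-cocycle}. This ensures that the extended weight of $M_{\alpha,\beta}$ is the same whichever side computes it, and that no residual phase $\kappa^{\mu}$ survives the identification.
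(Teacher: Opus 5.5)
Your first two paragraphs are exactly the paper's proof. You reduce to matrix coefficients in the canonical bases, rewrite them via Lemma~\ref{canonical-basis} as closed extended invariants of $M_{\alpha,\beta}$, and conclude with Theorem~\ref{thm:canonical-weighted-closure}(iv); the paper stops there. Your extra worry is justified, and the paper does not address it. The proof of Theorem~\ref{thm:canonical-weighted-closure}(iv) applies Theorem~\ref{thm:closed-equivalence}, which is proved only for $L'=\varnothing$, to closures that carry colored cores. If the cores were really dropped, $Z(M_{\alpha,\beta})$ could not depend on $(\alpha,\beta)$, since neither the underlying manifold nor its Walker weight does. That is already impossible for $X=\Sigma\times I$ with $g\ge 1$: there these numbers are the entries $\langle e_\beta^\vee,e_\alpha\rangle$ of an invertible $k^g\times k^g$ matrix, and an invertible matrix of that size cannot be constant.

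The patch you sketch, however, does not work as stated, for three reasons.

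First, completing the square over $(\mathbb Z_k)^m$ is not available, because the shift $\mathcal L^{-1}\mathcal L_{LL'}h$ is only rational. You need reciprocity with a linear term, which shifts the \emph{dual} variable by $c/k$, where $c=\mathcal L_{LL'}h$. For $m=1$, $\mathcal L=(p)$ with $p>0$, it reads
\[
\sum_{g\in\mathbb Z_k}e^{\frac{\pi i}{k}(pg^2+2cg)}=\Bigl(\frac kp\Bigr)^{1/2}e^{\pi i/4}\sum_{l\in\mathbb Z_p}e^{-\frac{\pi i k}{p}(l+c/k)^2}.
\]
So the sector $l$ picks up the linking phase $e^{-2\pi i\,l^\top\mathcal L^{-1}c}$ with the Wilson-line class. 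There is also an overall self-linking factor $e^{\frac{\pi i}{k}h^\top(\mathcal L_{L'}-\mathcal L_{LL'}^\top\mathcal L^{-1}\mathcal L_{LL'})h}$. None of this has the form $q_{\mathcal L}(x+\mathcal L^{-1}\mathcal L_{LL'}h)$.

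Second, for $\det\mathcal L=0$ the bookkeeping of Proposition~\ref{prop:power-match} is \emph{not} unchanged. Take $U^\top\mathcal LU=\mathrm{diag}(\mathcal L_{\mathrm{reg}},0)$ and let $c_0$ be the null block of $U^\top c$. The null sum becomes $\sum_{g_0\in(\mathbb Z_k)^\nu}e^{2\pi i\langle g_0,c_0\rangle/k}$, which equals $k^\nu$ if $c_0\equiv 0\pmod k$ and $0$ otherwise. Such selection rules are typical of the closures. For example, $S^2\times S^1$ with loops $\{\mathrm{pt}\}\times S^1$ of charges $x$ and $-y$ evaluates to $\delta_{x,y}$. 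On the CS side this must come from the emptiness of the space of flat connections on the complement of the cores with the prescribed meridian holonomies.

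Third, and most importantly, labelled closures are not objects of Manoliu's theory. The state of a plain handlebody lies on the single Bohr--Sommerfeld leaf with trivial meridian holonomy, and the theory has no Wilson lines. So for $(\alpha,\beta)\neq(0,0)$ the second identity of Lemma~\ref{canonical-basis} can only serve as a \emph{definition} of $Z^{CS}_{U(1),k}(M_{\alpha,\beta})$; it is not a consequence of the gluing axiom. The real work is to compute $\langle v_\beta^*,Z^{CS}_{U(1),k}(X)v_\alpha\rangle$ directly from $\sigma_X\otimes\mu_X$ and the BKS pairing on the $(\alpha,\beta)$ leaf. You then have to match it with the RT sum with linear term, selection rule included. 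Your Maslov/weight step is unproblematic and is already what Theorem~\ref{thm:canonical-weighted-closure}(i)--(ii) does.
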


\begin{proof}
By linearity, it suffices to compare matrix coefficients on the canonical basis of the incoming state space
and the canonical dual basis of the outgoing state space.

Let $\alpha\in\Pi_{\lambda_{in}},$and $\beta\in\Pi_{\lambda_{out}}$, write
\[
e_\alpha\in \mathcal V_k^{RT}(\Sigma_{in},\lambda_{in}),\qquad
e_\beta^\vee\in \mathcal V_k^{RT}(\Sigma_{out},\lambda_{out})^*
\]
for the canonical RT basis vector and canonical dual covector. Define
\[
v_\alpha:=\Phi_{\Sigma_{in}}(e_\alpha)\in \mathcal H_k(\Sigma_{in},L_{in}),
\qquad
v_\beta^*:=(\Phi_{\Sigma_{out}}^{-1})^*(e_\beta^\vee)\in \mathcal H_k(\Sigma_{out},L_{out})^* .
\]
By Theorem \ref{thm:state-space-identification}, \(v_\alpha\) and \(v_\beta^*\) are precisely the corresponding canonical
Chern--Simons basis vector and canonical dual covector. Now let \(M_{\alpha,\beta}\) be the canonical closed extended 3-manifold obtained by gluing to \(X\) the
standard incoming and outgoing handlebodies carrying the labels \(\alpha\) and \(\beta\), as in Lemma
\ref{canonical-basis}. That lemma gives
\[
\langle e_\beta^\vee, Z_{\mathbb Z_k}^{RT}(X)(e_\alpha)\rangle
=
Z_{\mathbb Z_k}^{RT}(M_{\alpha,\beta}),
\]
and
\[
\langle v_\beta^*, Z_{U(1),k}^{CS}(X)(v_\alpha)\rangle
=
Z_{U(1),k}^{CS}(M_{\alpha,\beta}).
\]
By Theorem \ref{thm:canonical-weighted-closure}, the two closed extended invariants agree on every canonical handlebody closure. Hence
\[
Z_{\mathbb Z_k}^{RT}(M_{\alpha,\beta})
=
Z_{U(1),k}^{CS}(M_{\alpha,\beta}),
\]
so
\[
\langle e_\beta^\vee, Z_{\mathbb Z_k}^{RT}(X)(e_\alpha)\rangle
=
\langle v_\beta^*, Z_{U(1),k}^{CS}(X)(v_\alpha)\rangle.
\]

Using the definitions of \(v_\alpha\) and \(v_\beta^*\), this becomes
\[
\langle e_\beta^\vee, Z_{\mathbb Z_k}^{RT}(X)(e_\alpha)\rangle
=
\Bigl\langle (\Phi_{\Sigma_{out}}^{-1})^*(e_\beta^\vee),
\, Z_{U(1),k}^{CS}(X)\bigl(\Phi_{\Sigma_{in}}(e_\alpha)\bigr)\Bigr\rangle,
\]
or equivalently,
\[
\langle e_\beta^\vee, Z_{\mathbb Z_k}^{RT}(X)(e_\alpha)\rangle
=
\langle e_\beta^\vee,
\, \Phi_{\Sigma_{out}}^{-1} Z_{U(1),k}^{CS}(X)\Phi_{\Sigma_{in}}(e_\alpha)\rangle.
\]

Since the covectors \(e_\beta^\vee\) separate points and the vectors \(e_\alpha\) form a basis, equality of
all these matrix coefficients implies
\[
Z_{\mathbb Z_k}^{RT}(X)
=
\Phi_{\Sigma_{out}}^{-1} Z_{U(1),k}^{CS}(X)\Phi_{\Sigma_{in}}.
\]
Equivalently,
\[
\Phi_{\Sigma_{out}}\circ Z_{\mathbb Z_k}^{RT}(X)
=
Z_{U(1),k}^{CS}(X)\circ\Phi_{\Sigma_{in}}.
\]
This proves the theorem.
\end{proof}
\subsection{\texorpdfstring{Extended Equivalence of Abelian RT and $U(1)$ Chern--Simons TQFTs}{Extended Equivalence of Abelian RT and U(1) Chern-Simons TQFTs}}

We now state the main result. The two extended TQFT functors agree under the boundary identification \eqref{eq:PhiSigma}. Theorem~\ref{thm:closed-equivalence} identifies the closed invariants exactly,  the role of the extended formalism is instead the usual one: to incorporate the Walker--Maslov correction needed for strict functoriality of the bordism operators. With this understood, the proof follows Turaev's definition of bordism operators via closed pairings together with the gluing formula, using the closed comparison theorem, the boundary state-space identification, and Theorem~\ref{thm:canonical-weighted-closure} to match the extended normalizations on the canonical handlebody closures.

\begin{theorem}
\label{thm:extended-equivalence}
Fix $k\in 2\mathbb Z_{>0}$, and let
\[
\mathcal C_k=\mathcal C(\mathbb Z_k,q_k),
\qquad
q_k(x)=\exp\!\Bigl(\frac{\pi i}{k}x^2\Bigr).
\]
Let
\[
Z^{\mathrm{RT}}_{\mathbb Z_{k}},\; Z^{\mathrm{CS}}_{U(1),k}:\Cob^{\mathrm{ext}}_{2+1}\longrightarrow \mathrm{Vect}_{\mathbb C}
\]
denote, respectively, Turaev's Walker--normalized Reshetikhin--Turaev TQFT associated with $\mathcal C_k$ \cite[\S II.2, \S IV.1--IV.3]{Turaev1994} and the extended $U(1)$ Chern--Simons TQFT at level $k$ \cite{Manoliu2}, with the extended structures identified via \eqref{eq:lambda-to-L}. Assume the normalization conventions are fixed so that the \emph{closed extended} invariants are compared in the same Kirby/Maslov convention.

Then:

\begin{enumerate}
\item[\textup{(i)}] \textbf{Closed extended equality on canonical closures.}
For every closed extended $3$--manifold $X$ arising as a canonical handlebody closure in the pairing
construction, one has
\[
Z^{\mathrm{RT}}_{\mathbb Z_{k}}(X)= Z^{\mathrm{CS}}_{U(1),k}(X).
\]
This is exactly the Walker--corrected closed equality obtained in
Theorem~\ref{thm:canonical-weighted-closure}.

\item[\textup{(ii)}] \textbf{Boundary operator equivalence.}
For every extended bordism
\[
X:(\Sigma_{\mathrm{in}},\lambda_{\mathrm{in}})\longrightarrow (\Sigma_{\mathrm{out}},\lambda_{\mathrm{out}})
\]
one has equality of bordism operators after identifying boundary state spaces:
\[
\Phi_{\Sigma_{\mathrm{out}}}\circ Z^{\mathrm{RT}}_{\mathbb Z_{k}}(X)
=
Z^{\mathrm{CS}}_{U(1),k}(X)\circ \Phi_{\Sigma_{\mathrm{in}}},
\]
where $\Phi_\Sigma$ is the canonical identification from Theorem~\ref{thm:state-space-identification}.
This is precisely Theorem~\ref{thm:boundary-equivalence-final}.

\item[\textup{(iii)}] \textbf{Natural monoidal isomorphism of functors.}
The family of isomorphisms \(\Phi_\Sigma\) from \eqref{eq:PhiSigma}, extended to disconnected surfaces by
tensor product over connected components, defines a \emph{monoidal natural isomorphism} of symmetric
monoidal functors
\[
\Phi:\ Z^{\mathrm{RT}}_{\mathbb Z_{k}}\Longrightarrow Z^{\mathrm{CS}}_{U(1),k}.
\]
Equivalently, \(Z^{\mathrm{RT}}_{\mathbb Z_{k}}\) and \(Z^{\mathrm{CS}}_{U(1),k}\) are isomorphic as symmetric monoidal
functors
\[
\Cob^{\mathrm{ext}}_{2+1}\longrightarrow \mathrm{Vect}_{\mathbb C}.
\]
\end{enumerate}
\end{theorem}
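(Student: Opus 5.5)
Parts (i) and (ii) are restatements of results already proved: (i) is Theorem~\ref{thm:canonical-weighted-closure}(iv) applied to the closures $M_{v,w}$, and (ii) is Theorem~\ref{thm:boundary-equivalence-final}. For these I will only check that the hypotheses match. The extended structures are identified through \eqref{eq:lambda-to-L}, and the Walker weight on the Chern--Simons side is the same $n\in\mathbb Z/8\mathbb Z$ that enters $\kappa^{-n}$ with $\kappa=e^{-\pi i/4}$ on the RT side. The real content is (iii), where the family $\{\Phi_\Sigma\}$ has to satisfy the axioms of a symmetric monoidal natural isomorphism.

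For (iii) I will verify three things in turn.

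\emph{Naturality with respect to bordisms.} For every morphism $X$ of $\Cob^{\mathrm{ext}}_{2+1}$, including disconnected ones, we need $\Phi_{\Sigma_{\mathrm{out}}}\circ Z^{RT}_{\mathbb Z_k}(X)=Z^{CS}_{U(1),k}(X)\circ\Phi_{\Sigma_{\mathrm{in}}}$. For connected boundary components this is (ii). For disconnected surfaces, $\Phi_\Sigma$ is defined as a tensor product, and the canonical bases $e_\alpha$ and $v_\alpha$ are tensor products of the componentwise bases, indexed by $\prod_j\Pi_{\lambda_j}$. Lemma~\ref{canonical-basis} and the matrix-coefficient argument of Theorem~\ref{thm:boundary-equivalence-final} then go through unchanged: one closes off every boundary component with its fixed standard handlebody and applies Theorem~\ref{thm:canonical-weighted-closure} to the resulting closed manifold. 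Morphisms that are mapping cylinders of extended diffeomorphisms are handled directly by the equivariance statement in Theorem~\ref{thm:state-space-identification}.

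\emph{Monoidality.} By construction $\Phi_{\Sigma_1\sqcup\Sigma_2}=\Phi_{\Sigma_1}\otimes\Phi_{\Sigma_2}$ and $\Phi_\varnothing=\mathrm{id}_{\mathbb C}$. What must be checked is that these are compatible with the monoidal structure maps. On the RT side the relevant map is $\mathcal V(\Sigma_1)\otimes\mathcal V(\Sigma_2)\cong\mathcal V(\Sigma_1\sqcup\Sigma_2)$, and on the Chern--Simons side it is the disjoint-union identification in Manoliu's theorem (item 3). Both send a tensor product of canonical basis vectors to the canonical basis vector of the disjoint union: on the RT side because a disjoint union of handlebodies is the handlebody of the union, and on the Chern--Simons side because the Bohr--Sommerfeld set of a product polarization is the product of the Bohr--Sommerfeld sets. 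So the required square commutes on bases, hence everywhere.

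\emph{Symmetry.} The braiding $\Sigma_1\sqcup\Sigma_2\cong\Sigma_2\sqcup\Sigma_1$ acts on both sides by permuting tensor factors of basis vectors, and $\Phi$ visibly commutes with this permutation.

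The step I expect to be the main obstacle is confirming that the Chern--Simons canonical dual covectors $v^*_\beta$ really are the covectors obtained by gluing outgoing standard handlebodies. That is, the BKS pairing must match the Hopf-link (Fourier) pairing of Proposition~\ref{prop:torus-pairing} with no extra phase or factor of $k^{g/2}$. Naturality on bordisms only reduces to closed invariants if this holds. I will take it as given through \eqref{iso-map} in Theorem~\ref{thm:state-space-identification}. As a consistency check, I will verify it on the genus-one cylinder and on the solid-torus closure giving $S^3$ with a Hopf link. There both sides can be computed explicitly: the RT side gives $\Omega_k(x,y)=e^{2\pi ixy/k}$, and the BKS side is a Gaussian overlap between two transverse Bohr--Sommerfeld leaves of the torus, which should produce the same discrete Fourier kernel. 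With that in place, functoriality of both theories together with naturality on generating bordisms (handle attachments and mapping cylinders) gives the isomorphism of symmetric monoidal functors.
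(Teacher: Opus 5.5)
Your proposal is correct relative to the paper's earlier results and is essentially the paper's proof: (i) and (ii) are quoted from Theorems~\ref{thm:canonical-weighted-closure} and~\ref{thm:boundary-equivalence-final}, and (iii) follows because naturality is exactly (ii), each $\Phi_\Sigma$ is an isomorphism, and monoidality holds by the tensor-product definition with $\Phi_\varnothing=\mathrm{id}_{\mathbb C}$; your checks of disconnected boundaries, the structure maps and the symmetry only spell out what the paper calls ``by definition'' and ``automatic''. Two corrections to your obstacle paragraph: first, the identification of the outgoing handlebody covectors with $v^*_\beta$ is the content of Lemma~\ref{canonical-basis}, not of \eqref{iso-map}, which holds for any linear isomorphism $\Phi_\Sigma$ because both cylinder kernels are identities; second, your genus-one check should be compared with the closed value $k^{-1/2}\Omega_k(x,y)$ from \eqref{eq:MPR-with-insertions} (up to the weight phase), not with the bare Hopf evaluation of Proposition~\ref{prop:torus-pairing}, since the unitary BKS kernel has entries of modulus $k^{-1/2}$.
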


\begin{proof}
Parts \textup{(i)} and \textup{(ii)} are exactly the previously established closed and boundary comparison
theorems in the fixed common normalization, namely
Theorem~\ref{thm:canonical-weighted-closure} together with
Theorem~\ref{thm:boundary-equivalence-final}. We therefore prove \textup{(iii)}.

Let $(\Sigma,\lambda)$ be an object of $\Cob^{\mathrm{ext}}_{2+1}$ and write it as a disjoint union of
connected components
\[
(\Sigma,\lambda)=\bigsqcup_{j=1}^r(\Sigma_j,\lambda_j).
\]
Both TQFTs are symmetric monoidal, hence provide canonical tensor product identifications
\[
\mathcal V^{\mathrm{RT}}_{k}(\Sigma,\lambda)\cong\bigotimes_{j=1}^r \mathcal V^{\mathrm{RT}}_{k}(\Sigma_j,\lambda_j),
\qquad
\mathcal H_k(\Sigma,L_\lambda)\cong\bigotimes_{j=1}^r \mathcal H_k(\Sigma_j,L_{\lambda_j}),
\]
where $L_\lambda=\bigoplus_j L_{\lambda_j}$ is the induced polarization on the Chern--Simons side. On
each connected component, Theorem~\ref{thm:state-space-identification} provides the canonical isomorphism
\eqref{eq:PhiSigma}. Define
\begin{equation}\label{eq:Phi-disconnected}
\Phi_\Sigma
:=
\bigotimes_{j=1}^r \Phi_{\Sigma_j},
\end{equation}
transported through the above canonical monoidal identifications. This gives a well-defined vector space
isomorphism for every object $(\Sigma,\lambda)$.

\smallskip
\noindent\textbf{Naturality.}
Let
\[
X:(\Sigma_{\mathrm{in}},\lambda_{\mathrm{in}})\rightarrow(\Sigma_{\mathrm{out}},\lambda_{\mathrm{out}})
\]
be any morphism in $\Cob^{\mathrm{ext}}_{2+1}$. We must show that the square
\[
\begin{CD}
\mathcal V^{\mathrm{RT}}_{k}(\Sigma_{\mathrm{in}},\lambda_{\mathrm{in}})
@>{Z^{\mathrm{RT}}_{\mathbb Z_{k}}(X)}>>
\mathcal V^{\mathrm{RT}}_{k}(\Sigma_{\mathrm{out}},\lambda_{\mathrm{out}})\\
@V{\Phi_{\Sigma_{\mathrm{in}}}}VV
@VV{\Phi_{\Sigma_{\mathrm{out}}}}V\\
\mathcal H_k(\Sigma_{\mathrm{in}},L_{\lambda_{\mathrm{in}}})
@>>{Z^{\mathrm{CS}}_{U(1),k}(X)}>
\mathcal H_k(\Sigma_{\mathrm{out}},L_{\lambda_{\mathrm{out}}})
\end{CD}
\]
commutes. But this is precisely the statement of part \textup{(ii)}, that is,
Theorem~\ref{thm:boundary-equivalence-final}. Thus \(\{\Phi_\Sigma\}\) is a natural transformation
\[
Z^{\mathrm{RT}}_{\mathbb Z_{k}}\Rightarrow Z^{\mathrm{CS}}_{U(1),k}.
\]

\smallskip
\noindent\textbf{\(\Phi\) is a natural isomorphism.}
Each \(\Phi_\Sigma\) is an isomorphism by construction: on connected components this is
Theorem~\ref{thm:state-space-identification}, and on disconnected surfaces it is the tensor product of the
isomorphisms \eqref{eq:Phi-disconnected}. Hence \(\Phi\) is a natural isomorphism.

\smallskip
\noindent\textbf{Monoidality.}
Let \((\Sigma_1,\lambda_1)\) and \((\Sigma_2,\lambda_2)\) be extended surfaces. Symmetric monoidality of
both TQFTs gives canonical identifications
\[
\mathcal V^{\mathrm{RT}}_{k}(\Sigma_1\sqcup\Sigma_2,\lambda_1\oplus\lambda_2)
\cong
\mathcal V^{\mathrm{RT}}_{k}(\Sigma_1,\lambda_1)\otimes \mathcal V^{\mathrm{RT}}_{k}(\Sigma_2,\lambda_2),
\]
\[
\mathcal H_k(\Sigma_1\sqcup\Sigma_2,L_{\lambda_1}\oplus L_{\lambda_2})
\cong
\mathcal H_k(\Sigma_1,L_{\lambda_1})\otimes \mathcal H_k(\Sigma_2,L_{\lambda_2}).
\]
By the definition \eqref{eq:Phi-disconnected} of \(\Phi\) on disjoint unions, the identity
\[
\Phi_{\Sigma_1\sqcup\Sigma_2}=\Phi_{\Sigma_1}\otimes \Phi_{\Sigma_2}
\]
holds under the above identifications. On the monoidal unit \(\varnothing\), both theories assign
\(\mathbb C\), and we take $\Phi_\varnothing=\mathrm{id}_{\mathbb C}.$
Therefore \(\Phi\) is a monoidal natural isomorphism.

\smallskip
\noindent\textbf{Coherence with composition and identities.}
Coherence is automatic from naturality together with functoriality of each TQFT.
For composable bordisms \(X_1,X_2\), naturality gives compatibility with
\[
Z^{\mathrm{RT}}_{\mathbb Z_{k}}(X_2\circ X_1)
=
Z^{\mathrm{RT}}_{\mathbb Z_{k}}(X_2)\circ Z^{\mathrm{RT}}_{\mathbb Z_{k}}(X_1)
\]
and
\[
Z^{\mathrm{CS}}_{U(1),k}(X_2\circ X_1)
=
Z^{\mathrm{CS}}_{U(1),k}(X_2)\circ Z^{\mathrm{CS}}_{U(1),k}(X_1),
\]
and similarly for identity bordisms. This uses strict functoriality of the extended RT theory \cite[\S II.2, \S IV.3]{Turaev1994} and of the CS extended theory \cite{Manoliu2}. This completes the proof. \end{proof}

The relation between Abelian Chern--Simons theory and the Reshetikhin--Turaev TQFT associated with the
pointed modular category \(\mathcal C_k=\mathcal C(\mathbb Z_k,q_k)\) is well known at the level of closed
partition functions and Hilbert space dimensions; see \cite{Murakami,Jeffrey:1992, Mattes, Stirling, Guadagnini:2013sb,Guadagnini2014,Tagaris2025}. The theorem above upgrades this correspondence to an explicit equivalence of extended TQFT functors. 


\subsection{Finite Quadratic Modules as Universal Data}

The preceding sections show that the closed and extended rank-one Abelian theories are
governed by finite quadratic data. For a closed oriented $3$--manifold $M$, this data is
the finite quadratic module
\[
\bigl(T,q_M\bigr), \qquad T:=\Tors H_1(M;\mathbb Z),
\]
where $q_M$ is the quadratic refinement of the torsion linking pairing determined by the
Chern--Simons functional. For the rank-one theory at even level $k$, the corresponding
discrete datum is the finite quadratic module
\[
(\mathbb Z_k,q_k), \qquad q_k(x)=\exp\!\left(\frac{\pi i}{k}x^2\right).
\]
We now summarize the comparison results proved above and record the resulting
classification statement.

\begin{theorem}
\label{thm:universal-quadratic-data}
Let $M$ be a closed, connected, oriented $3$--manifold, and let
\[
T:=\Tors H_1(M;\mathbb Z).
\]
Let $\lambda_M:T\times T\to \mathbb Q/\mathbb Z$ be the torsion linking pairing, and let
$q_M:T\to \mathbb Q/\mathbb Z$ be its quadratic refinement determined by the
Chern--Simons functional. Fix $k\in 2\mathbb Z_{>0}$ and define
\begin{equation}\label{eq:Gauss-final}
\mathcal G_k(M):=
|T|^{-1/2}\sum_{x\in T}\exp\!\bigl(2\pi i k\,q_M(x)\bigr).
\end{equation}
Then:
\begin{enumerate}
\item[\textup{(i)}]
\begin{equation}\label{eq:CS-final}
Z^{\mathrm{CS}}_{U(1),k}(M)
=
k^{m_M}\,\mathcal G_k(M),
\qquad
m_M=\tfrac12\bigl(b_1(M)-1\bigr).
\end{equation}

\item[\textup{(ii)}] If $M$ is presented by surgery on a framed link with linking matrix
$\mathcal L$, then
\begin{equation}\label{eq:RT-final}
Z^{RT}_{\mathbb Z_{k}}(M)=Z^{\mathrm{CS}}_{U(1),k}(M).
\end{equation}

\item[\textup{(iii)}] Let
\[
C_k:=C(\mathbb Z_k,q_k),
\qquad
q_k(x)=\exp\!\left(\frac{\pi i}{k}x^2\right).
\]
Then there is a symmetric monoidal natural isomorphism
\[
Z^{RT}_{C_k}\xRightarrow{\ \cong\ } Z^{CS}_{U(1),k}.
\]
Equivalently, for every extended surface $(\Sigma,\lambda)$ there is a canonical
isomorphism
\[
\Phi_\Sigma:
\mathcal V^{RT}_{k}(\Sigma,\lambda)\xrightarrow{\ \cong\ }
\mathcal H_{U(1),k}(\Sigma,L_\lambda),
\]
compatible with the operators assigned to extended bordisms.
\end{enumerate}
\end{theorem}

\begin{proof}
Part \textup{(i)} is exactly \eqref{closed-CS-Z}, rewritten using
\eqref{eq:Gauss-final}. Part \textup{(ii)} is Theorem~\ref{thm:closed-equivalence}.
Part \textup{(iii)} is Theorem~\ref{thm:extended-equivalence}; the boundary
identification is Theorem~\ref{thm:state-space-identification}, and compatibility with
bordism operators is Theorem~\ref{thm:boundary-equivalence-final}.
\end{proof}

\begin{theorem}\label{thm:classification-rank-one}
Let $k,\ell \in 2\mathbb Z_{>0}$. For $m\in\{k,\ell\}$, set
\[
q_m(x)=\exp\!\left(\frac{\pi i}{m}x^2\right),
\qquad
C_m:=C(\mathbb Z_m,q_m),
\]
and let
\[
Z^{CS}_{U(1),m}:\mathrm{Cob}^{ext}_{2+1}\longrightarrow \mathrm{Vect}_{\mathbb C}
\]
denote the extended $U(1)$ Chern--Simons TQFT at level $m$.

Then the following are equivalent:
\begin{enumerate}
\item[\textup{(i)}] $k=\ell$;
\item[\textup{(ii)}] $(\mathbb Z_k,q_k)\cong (\mathbb Z_\ell,q_\ell)$ as finite quadratic modules;
\item[\textup{(iii)}] $Z^{RT}_{C_k}\cong Z^{RT}_{C_\ell}$ as symmetric monoidal functors;
\item[\textup{(iv)}] $Z^{CS}_{U(1),k}\cong Z^{CS}_{U(1),\ell}$ as symmetric monoidal functors.
\end{enumerate}

In particular, within the even-level rank-one family, the extended Abelian
Chern--Simons theory is classified by the finite quadratic module $(\mathbb Z_k,q_k)$.
\end{theorem}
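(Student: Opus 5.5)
The plan is to prove the cycle (i)$\Rightarrow$(ii)$\Rightarrow$(iii)$\Leftrightarrow$(iv)$\Rightarrow$(i).

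The implication (i)$\Rightarrow$(ii) is trivial.

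For (ii)$\Rightarrow$(iii), an isomorphism of finite quadratic modules $(\mathbb Z_k,q_k)\cong(\mathbb Z_\ell,q_\ell)$ induces a braided (ribbon) equivalence $C_k\simeq C_\ell$ of pointed modular categories. The twists are given by $q$ and the braiding by $\Omega$, and the associator is trivial on both sides. Turaev's construction (Theorem~\ref{thm:RT-boundary-TQFT}) is functorial in ribbon equivalences: state spaces are skein spaces in handlebodies, and the closed invariants are computed by the surgery formula, which depends only on $q$. The equivalence therefore transports colorings and gives a monoidal natural isomorphism $Z^{RT}_{C_k}\cong Z^{RT}_{C_\ell}$. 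The anomaly constant $\kappa=e^{-\pi i/4}$ of Proposition~\ref{prop:Maslov-composition} is the same for both, so the Walker normalization is preserved.

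For (iii)$\Leftrightarrow$(iv), compose with the natural isomorphisms $\Phi:Z^{RT}_{C_m}\Rightarrow Z^{CS}_{U(1),m}$ from Theorem~\ref{thm:universal-quadratic-data}(iii), for $m=k,\ell$. Composites and inverses of symmetric monoidal natural isomorphisms are again such.

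For (iv)$\Rightarrow$(i), I would extract a numerical invariant. A monoidal natural isomorphism gives isomorphisms of state spaces on every object, so the dimensions agree. For the torus with any Lagrangian, Theorem~\ref{thm:state-space-identification} gives $\dim\mathcal H_k(T^2,L)=k$ and $\dim\mathcal H_\ell(T^2,L)=\ell$, which forces $k=\ell$. As a cross-check using only closed data: on closed extended manifolds, naturality at the empty surface, where $\Phi_\varnothing$ is an invertible scalar and monoidality forces it to be $1$, gives equality of the closed invariants. Then $S^1\times S^2$ has $b_1=1$ and trivial torsion, so \eqref{eq:CS-final} gives $Z^{CS}_{U(1),k}(S^1\times S^2)=k^{0}=1$. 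That is uninformative, so instead I would use $S^1\times\Sigma_g$ via the trace of the cylinder, which gives $k^g$.

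The main obstacle is (ii)$\Rightarrow$(iii): I must make precise that Turaev's extended functor depends only on the ribbon equivalence class of the category, and that the Maslov/Walker weights are compatible. I would handle this by noting that a quadratic-module isomorphism $\mathbb Z_k\to\mathbb Z_\ell$ exists only when $k=\ell$, by cardinality. So (ii) already forces $k=\ell$, and (iii) follows from the identity functor. This sidesteps the general functoriality statement entirely, and the rest reduces to the dimension count.
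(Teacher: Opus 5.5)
Your argument is correct and essentially matches the paper's proof: cardinality of the underlying groups gives (ii)$\Rightarrow$(i), a dimension count $k^g=\ell^g$ on an extended surface of genus $g\ge 1$ handles the functor isomorphisms, and the natural isomorphism $\Phi$ of Theorem~\ref{thm:universal-quadratic-data}(iii) passes between (iii) and (iv). The differences are cosmetic: you count dimensions on the Chern--Simons side and use $\Phi$ for (iii)$\Rightarrow$(iv), whereas the paper counts on the RT side and uses $\Phi$ for (iv)$\Rightarrow$(iii); your closed-manifold cross-check and ribbon-equivalence discussion become unnecessary once you note that (ii) already forces $k=\ell$.
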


\begin{proof}
The equivalence of \textup{(i)} and \textup{(ii)} is immediate, since an isomorphism $(\mathbb Z_k,q_k)\cong (\mathbb Z_\ell,q_\ell)$ identifies the underlying finite groups, hence
\[
k=|\mathbb Z_k|=|\mathbb Z_\ell|=\ell.
\]

The implications \textup{(i)}$\Rightarrow$\textup{(iii)} and
\textup{(i)}$\Rightarrow$\textup{(iv)} are immediate. Assume \textup{(iii)}. Evaluate the natural isomorphism
\[
Z^{RT}_{C_k}\cong Z^{RT}_{C_\ell}
\]
on any connected extended surface $(\Sigma,\lambda)$ of genus $g\ge 1$. Then
\[
\mathcal V^{RT}_{k}(\Sigma,\lambda)\cong \mathcal V^{RT}_{\ell}(\Sigma,\lambda),
\]
so Theorem~\ref{thm:state-space-identification} gives
\[
k^g=\dim \mathcal V^{RT}_{k}(\Sigma,\lambda)
=\dim \mathcal V^{RT}_{\ell}(\Sigma,\lambda)=\ell^g.
\]
Hence $k=\ell$. Assume \textup{(iv)}. By Theorem~\ref{thm:universal-quadratic-data}\textup{(iii)},
\[
Z^{CS}_{U(1),k}\cong Z^{RT}_{C_k},
\qquad
Z^{CS}_{U(1),\ell}\cong Z^{RT}_{C_\ell}.
\]
Therefore \textup{(iv)} implies
\[
Z^{RT}_{C_k}\cong Z^{RT}_{C_\ell},
\]
so \textup{(iii)} holds, and hence $k=\ell$ by the previous paragraph. Thus \textup{(i)}--\textup{(iv)} are equivalent.
\end{proof}

Thus, at even level, the finite quadratic module \((\mathbb Z_k,q_k)\) completely determines the \(U(1)\) Abelian Chern--Simons theory.

\bibliographystyle{alpha}
\renewcommand{\refname}{References}
\bibliography{refs}
\end{document}